\documentclass[a4paper,11pt]{article}
\usepackage{amsmath}
\usepackage{amssymb}
\usepackage{amsthm}
\usepackage{MnSymbol}
\usepackage{mathrsfs}
\usepackage{graphicx,subfigure}
\usepackage{paralist} %enumerate/itemize in paragraphs
\usepackage{enumerate}
\usepackage{hyperref,url}

\usepackage{bm} % for bold greek letters
\usepackage{placeins} %for floatbarriers
\usepackage{color}
\usepackage[margin=30mm]{geometry}

\newcommand{\N}{\mathbb{N}}
\newcommand{\R}{\mathbb{R}}
\renewcommand{\div}{\mathrm{div} \, }
\newcommand{\dHaus}{\, d\mathcal{H}^{d-1}}
\newcommand{\dx}{\, d \mathcal{L}^{d}}
\newcommand{\ds}{\, ds}
\newcommand{\dt}{\, dt}
\newcommand{\dd}{\, d}
\newcommand{\dz}{\, dz}
\newcommand{\nut}{nu}
\newcommand{\tox}{tx}
\newcommand{\ptq}{pq}
\newcommand{\qtp}{qp}
\newcommand{\qtn}{qn}

\newcommand{\pd}{\partial}
\newcommand{\abs}[1]{\left| #1 \right|}

\newcommand{\eps}{\varepsilon}
\newcommand{\Laplace}{\Delta}
\newcommand{\surf}{\nabla_{\Gamma}}

\newcommand{\md}{\pd^{\bullet}_{t}}

\newcommand{\tr}[1]{\text{tr} \left ( #1 \right )}
\newcommand{\jump}[3]{\left [#1 \right ]_{#2}^{#3}}

\newcommand{\Gibbs}{\mathrm{G}}
\newcommand{\TangGibbs}{\mathrm{TG}}
\newcommand{\HGibbs}{\mathrm{HG}}
\newcommand{\unit}{\bm{1}}
\newcommand{\Proj}[1]{\mathbb{P} #1}
\newcommand{\TT}[1]{\underline{\underline{#1}}\rule{0pt}{0pt}}
\newcommand{\id}{\TT{\mathrm{I}}}
\newcommand{\bvarphi}{\bm\varphi}

% Theorem environments
\theoremstyle{plain}
\newtheorem{thm}{Theorem}[section]
\newtheorem{remark}{Remark}[section]
\newtheorem{assump}{Assumption}[section]
%numbering
\numberwithin{equation}{section}

%opening
\title{A multiphase Cahn--Hilliard--Darcy model for tumour growth with necrosis}

\author{Harald Garcke \footnotemark[1] \and Kei Fong Lam\footnotemark[1] \and Robert N\"{u}rnberg \footnotemark[2] \and Emanuel Sitka \footnotemark[3] }

\date{ }

\begin{document}

\maketitle

\renewcommand{\thefootnote}{\fnsymbol{footnote}}
\footnotetext[1]{Fakult\"at f\"ur Mathematik, Universit\"at Regensburg, 93040 Regensburg, Germany
({\tt \{Harald.Garcke, Kei-Fong.Lam\}@mathematik.uni-regensburg.de}).}
\footnotetext[2]{Department of Mathematics, Imperial College London, London, SW7 2AZ, UK ({\tt robert.nurnberg@imperial.ac.uk}).}
\footnotetext[3]{Fakult\"at f\"ur Medizin, Universit\"at Regensburg, 93040 Regensburg, Germany ({\tt Emanuel.Sitka@stud.uni-regensburg.de}).}

\begin{abstract}
We derive a Cahn--Hilliard--Darcy model to describe multiphase tumour growth taking interactions with multiple chemical species into account as well as the simultaneous occurrence of proliferating, quiescent and necrotic regions.  Via a coupling of the Cahn--Hilliard--Darcy equations to a system of reaction-diffusion equations a multitude of phenomena such as nutrient diffusion and consumption, angiogenesis, hypoxia, blood vessel growth, and inhibition by toxic agents, which are released for example by the necrotic cells, can be included.  A new feature of the modelling approach is that a volume-averaged velocity is used, which dramatically simplifies the resulting equations.  With the help of formally matched asymptotic analysis we develop new sharp interface models.  Finite element numerical computations are performed and in particular the effects of necrosis on tumour growth is investigated numerically.

\end{abstract}

\noindent \textbf{Key words. } multiphase tumour growth, phase field model, Darcy flow, necrosis, cellular adhesion, matched asymptotic expansions, finite element computations \\

\noindent \textbf{AMS subject classification. }  92B05, 35K57, 35R35, 65M60	

\renewcommand{\thefootnote}{\arabic{footnote}}

\section{Introduction}\label{sec:intro}
The morphological evolution of cancer cells, driven by chemical and biological mechanisms, is still poorly understood even in the simplest case of avascular tumour growth.  It is well-known that in the avascular stage, initially homogeneous tumour cells will eventually develop heterogeneity in their growth behaviour.  For example, quiescent cells appear when the tumour reaches a diffusion-limited size, where levels of nutrients, such as oxygen, are too low to support cell proliferation, and necrotic cells develop when the nutrient density drops further.  It is expected that angiogenic factors are secreted by the quiescent tumour cells to induce the development of a capillary network towards the tumour and deliver much required nutrients for proliferation \cite{Rey}.  But it has also been observed (experimentally \cite{Pennacchietti} and in numerical simulations \cite{CLLW,CLNie}), that the tumour exhibits morphological instabilities, driven by a combination of chemotactic gradients and inhomogeneous proliferation, which allows the interior tumour cells to access nutrients by increasing the surface area of the tumour interface.

In this paper, we propose a multi-component diffuse interface model for modelling heterogeneous tumour growth.  We consider $L$ types of cells, with $M$ chemical species.  Similar in spirit to Ambrosi and Preziosi \cite{Ambrosi} (see also \cite{Beysens,Foty,Ranft}), we model each of the $L$ different cell types as inertia-less fluids, and each of the $M$ chemical species can freely diffuse and may be subject to additional mechanisms such as chemotaxis and active transport.  In the diffuse interface methodology, interfaces between different components are modelled as thin transition layers, in which the macroscopically distinct components are allowed to mix microscopically.  This is in contrast to the sharp interface approach, where the interfaces are modelled as idealised moving hypersurfaces.  The treatment of cells as viscous inertia-less fluids naturally leads to a notion of an averaged velocity for the fluid mixture, and we will use a volume-averaged velocity, which is also considered in \cite{AGG,GLSS}.

From basic conservation laws, we will derive the following multi-component model:
\begin{subequations}\label{Intro:multiphase}
\begin{alignat}{3}
\div \vec{v} & = \unit \cdot \bm{U}(\bm{\varphi}, \bm{\sigma}), \label{Intro:div} \\
\vec{v} & = - K \nabla p + K (\nabla \bm{\varphi})^{\top}(\bm{\mu} - \bm{N}_{,\bm{\varphi}}(\bm{\varphi}, \bm{\sigma})), \label{Intro:Darcy} \\
\pd_{t} \bm{\varphi} + \div (\bm{\varphi} \otimes \vec{v}) & = \div (\TT{C}(\bm{\varphi}, \bm{\sigma}) \nabla \bm{\mu}) + \bm{U}(\bm{\varphi}, \bm{\sigma}), \label{Intro:varphi} \\
\bm{\mu} & = - \beta \eps \Laplace \bm{\varphi} + \beta \eps^{-1} \Psi_{,\bm{\varphi}}(\bvarphi) + \bm{N}_{,\bm{\varphi}}(\bm{\varphi}, \bm{\sigma}), \label{Intro:mu} \\
\pd_{t} \bm{\sigma} + \div (\bm{\sigma} \otimes \vec{v}) & = \div (\TT{D}(\bm{\varphi}, \bm{\sigma}) \nabla \bm{N}_{,\bm{\sigma}}(\bm{\varphi}, \bm{\sigma})) + \bm{S}(\bm{\varphi}, \bm{\sigma}), \label{Intro:sigma}
\end{alignat}
\end{subequations}
for a vector $\bm{\varphi} = (\varphi_{1}, \dots, \varphi_{L})^{\top}$ of volume fractions, i.e., $\sum_{i=1}^{L} \varphi_{i} = 1$ and $\varphi_{i} \geq 0$ for $1 \leq i \leq L$, where $\varphi_{i}$ represents the volume fraction of the $i$th cell type, and for a vector $\bm{\sigma} = (\sigma_{1}, \dots, \sigma_{M})^{\top}$, with $\sigma_{j}$ representing the density of the $j$th chemical species.  The velocity $\vec{v}$ is the volume-averaged velocity, $p$ is the pressure, $\bm{\mu} = (\mu_{1}, \dots, \mu_{L})^{\top}$ is the vector of chemical potentials associated to $\bm{\varphi}$, and $\bm{N}_{,\bm{\varphi}} \in \R^{L}$ and $\bm{N}_{,\bm{\sigma}} \in \R^{M}$ denote the partial derivatives of the chemical free energy density $N$ with respect to $\bm{\varphi}$ and $\bm{\sigma}$, respectively.

The system \eqref{Intro:multiphase} can be seen as the multi-component variant of the Cahn--Hilliard--Darcy system derived in Garcke et al.\ \cite{GLSS}.  Equation \eqref{Intro:sigma} can be viewed as a convection-reaction-diffusion system with a vector of source terms $\bm{S} \in \R^{M}$, where for vectors $\bm{a} \in \R^{k}$ and $\bm{b} \in \R^{l}$, the tensor product $\bm{a} \otimes \bm{b} \in \R^{k \times l}$ is defined as $(\bm{a} \otimes \bm{b})_{ij} = a_{i} b_{j}$ for $1 \leq i \leq k$ and $1 \leq j \leq l$.  The positive semi-definite mobility tensor $\TT{D}(\bm{\varphi}, \bm{\sigma})$ can be taken as a second order tensor in $\R^{M \times M}$, or even as a fourth order tensor in $\R^{M \times d \times M \times d}$, where $d$ is the spatial dimension.

Equations \eqref{Intro:varphi} and \eqref{Intro:mu} constitute a multi-component convective Cahn--Hilliard system with a vector of source terms $\bm{U} \in \R^{L}$ and a mobility tensor $\TT{C}(\bm{\varphi}, \bm{\sigma})$, which we take to be either a second order tensor in $\R^{L \times L}$ or a fourth order tensor in $\R^{L \times d \times L \times d}$.  Furthermore, we ask that $\sum_{i=1}^{L} \TT{C}_{ij} = 0$  in the former case and $\sum_{i=1}^{L} \TT{C}_{imjl} = 0$ in the latter case for any $1 \leq j \leq L$ and $1 \leq m,l \leq d$.  These conditions ensure that $\sum_{i=1}^{L} \varphi_{i}(t) = 1$ for $t > 0$ if $\sum_{i=1}^{L} \varphi_{i}(0) = 1$.  One example of such a second order mobility tensor is $\TT{C}_{ij}(\bm{\varphi},\bm\sigma) = m_{i}(\varphi_{i}) (\delta_{ij} - m_{j}(\varphi_{j})/ \sum_{k=1}^{L} m_{k}(\varphi_{k}))$ for $1 \leq i, j \leq L$ and so-called bare mobilities $m_{i}(\varphi_{i})$, see \cite{ElliottGarckeMulti}.  The vector $\Psi_{,\bm{\varphi}}$ is the vector of partial derivatives of a multi-well potential $\Psi$ with $L$ equal minima at the points $\bm{e}_{l}$, $l = 1, \dots, L$, where $\bm{e}_{l}$ is the $l$th unit vector in $\R^{L}$.  

Equation \eqref{Intro:Darcy} is a generalised Darcy's law (with permeability $K > 0$) relating the volume-averaged velocity $\vec{v}$ and the pressure $p$, while in equation \eqref{Intro:div}, $\unit = (1, \dots, 1)^{\top} \in \R^{L}$ and $\unit \cdot \bm{U}$ is the sum of the components of the vector of source terms $\bm{U}$ in \eqref{Intro:varphi}, and \eqref{Intro:div} relates the gain or loss of volume from the vector of source terms $\bm{U}$ to the changes of mass balance.  

Lastly, $\beta > 0$ and $\eps  > 0$ are parameters related to the surface tension and the interfacial thickness, respectively.
In fact, associated with \eqref{Intro:multiphase} is the free energy 
\begin{align}\label{Intro:energy}
\mathcal{E}(\bm{\varphi}, %\nabla \bm{\varphi}, 
\bm{\sigma}) = \int_\Omega\frac{\beta \eps}{2} \sum_{i=1}^{L} \abs{\nabla \varphi_{i}}^{2} + \frac{\beta}{\eps} \Psi(\bm{\varphi}) + N(\bm{\varphi}, \bm{\sigma}) \dx,
\end{align}
where $\dx$ denotes integration with respect to the $d$ dimensional Lebesgue measure. The first two terms in the integral account for the interfacial energy (and by extension the adhesive properties of the different cell types), and the last term accounts for the free energy of the chemical species and their interaction with the cells.  

As a special case, we consider $L=3$ and $M=1$, so that we have three cell types; host cells $(\varphi_{1})$, proliferating tumour cells $(\varphi_{2})$ and necrotic cells $(\varphi_{3})$, along with one chemical species $(\sigma)$ acting as nutrient, for example oxygen.  Then, \eqref{Intro:sigma} becomes a scalar equation, with mobility $\TT{D}(\bm{\varphi},\sigma)$ chosen as a scalar function $D(\bm{\varphi},\sigma)$, and the vector $\bm{S}(\bm{\varphi}, \sigma)$ becomes a scalar function $S(\bm{\varphi}, \sigma)$.  In this case, one can consider a chemical free energy density of the form
\begin{align}
\label{Intro:nutrient:eg}
N(\bm{\varphi}, \sigma) = \frac{\chi_{\sigma}}{2} \abs{\sigma}^{2} - \chi_{\varphi} \sigma \varphi_{2} - \chi_{n} f(\sigma) \varphi_{3},
\end{align}
where $\chi_{\sigma} > 0, \chi_{\varphi}, \chi_{n} \geq 0$ are constants and $f: [0,\infty) \to [0,\infty)$ is a monotonically decreasing function such that $f(s) = 0$ for $s \geq c_{*} > 0$.  The first term of \eqref{Intro:nutrient:eg} will lead to diffusion of the nutrients, and the second term models the chemotaxis mechanism that drives the proliferating tumour cells to regions of high nutrient, which was similarly considered in \cite{CLLW, book:CristiniLowengrub, GLSS, HawkinsZeeOden12}.  The third term shows that it is energetically favourable to be in the necrotic phase when the nutrient density is below $c_{*}$.  Indeed, when $\sigma < c_{*}$, $f(\sigma)$ is positive, and so the term $-\chi_{n} f(\sigma) \varphi_{3}$ is negative when $\varphi_{3} = 1$.
Overall we obtain from \eqref{Intro:multiphase} the three-component model
\begin{subequations}\label{Intro:3component}
\begin{alignat}{3}
\div \vec{v} & = \unit \cdot \bm{U}(\bm{\varphi}, \sigma), \quad \vec{v} = - K \nabla p + K (\nabla \bm{\varphi})^{\top}( \bm{\mu} - \bm{N}_{,\bm{\varphi}}(\bm{\varphi}, \sigma)), \\
\pd_{t} \bm{\varphi} + \div (\bm{\varphi} \otimes \vec{v}) & = \div (\TT{C}(\bm{\varphi}, \sigma) \nabla \bm{\mu}) + \bm{U}(\bm{\varphi}, \sigma),  \\
\bm{\mu} & = - \beta \eps \Laplace \bm{\varphi} + \beta \eps^{-1} \Psi_{,\bm{\varphi}}(\bvarphi) + \bm{N}_{,\bm{\varphi}}(\bm{\varphi}, \sigma), \label{Intro:3component:mu} \\
\pd_{t}\sigma + \div (\sigma \vec{v}) & = \div (D(\bm{\varphi}, \sigma) \nabla ( \chi_{\sigma} \sigma - \chi_{\varphi} \varphi_{2} - \chi_{n} f'(\sigma) \varphi_{3})) + S(\bm{\varphi}, \sigma),  \label{Intro:3component:sigma} \\
\bm{N}_{,\bm{\varphi}}(\bvarphi,\sigma) & = (0, \; - \chi_{\varphi} \sigma, \; - \chi_{n} f(\sigma))^{\top}.
\end{alignat}
\end{subequations}
Similar to \cite{GLSS}, we define
\begin{align} \label{eq:lambda}
\lambda = \frac{\chi_{\varphi}}{\chi_{\sigma}},\quad \theta = \frac{\chi_{n}}{\chi_{\sigma}}, \quad d(\bm{\varphi}, \sigma)  = D(\bm{\varphi}, \sigma) \chi_{\sigma},
\end{align}
so that \eqref{Intro:3component:sigma} becomes
\begin{align}
\label{Intro:sigma:decoupled}
\pd_{t}\sigma + \div (\sigma \vec{v}) = \div (d(\bm{\varphi}, \sigma) \nabla (\sigma  - \lambda \varphi_{2} - \theta f'(\sigma) \varphi_{3})) + S(\bm{\varphi}, \sigma),
\end{align}
which allows us to decouple the chemotaxis mechanism that was appearing in \eqref{Intro:3component:mu} and \eqref{Intro:3component:sigma}.  We point out that it is possible to neglect the effects of fluid flow by sending $K \to 0$ in the case $\unit \cdot \bm{U} = 0$.  By Darcy's law and $\div \vec{v} = 0$, we obtain $\vec{v} \to \vec{0}$ as $K \to 0$, and the above system \eqref{Intro:3component} with source terms satisfying $\unit \cdot \bm{U}(\bm{\varphi}, \sigma) = 0$ transforms into  
\begin{equation*}
\begin{aligned}
\pd_{t} \bm{\varphi} & = \div (\TT{C}(\bm{\varphi}, \sigma) \nabla \bm{\mu}) + \bm{U}(\bm{\varphi}, \sigma), \\
\bm{\mu} & = - \beta \eps \Laplace \bm{\varphi} + \beta \eps^{-1} \Psi_{,\bm{\varphi}}(\bvarphi) + \bm{N}_{,\bm{\varphi}}(\bm{\varphi}, \sigma), \\
\pd_{t} \sigma & = \div (D(\bm{\varphi}, \sigma) \nabla (\chi_{\sigma} \sigma - \chi_{\varphi} \varphi_{2} - \chi_{n} f'(\sigma) \varphi_{3})) + S(\bm{\varphi}, \sigma).
\end{aligned}
\end{equation*}

We now consider the case that tumour cells prefer to adhere to each other instead of the host cells, for general $L \geq 2$ and $M \geq 1$.  More precisely, let $\varphi_{1}$ denote the volume fraction of the host cells, and $\varphi_{T} = 1 - \varphi_{1} = \sum_{i=2}^{L} \varphi_{i}$ is the total volume fraction of the $L-1$ types of tumour cells.  Then the following choice of interfacial energy is considered:
\begin{align}\label{deg:GL}
E(\bvarphi) = \int_{\Omega} \frac{\beta \eps}{2} \abs{\sum_{i=2}^{L} \nabla \varphi_{i}}^{2} + \frac{\beta}{\eps} W \left ( \sum_{i=2}^{L} \varphi_{i} \right ) \dx,
\end{align}
where $W$ is a potential with equal minima at $0$ and $1$.  Note that \eqref{deg:GL} can be viewed as a function of $\varphi_{T}$, i.e., $E(\bvarphi) = \hat{E}(\varphi_{T}) = \int_{\Omega}\frac{\beta \eps}{2} \abs{\nabla \varphi_{T}}^{2} + \frac{\beta}{\eps} W(\varphi_{T}) \dx$, and it is energetically favourable to have $\varphi_{T} = 0$ (representing the host tissues) or $\varphi_{T} = 1$ (representing the tumour as a whole).  It holds that the first variation of $E$ with respect to $\varphi_{i}$, $2 \leq i \leq L$, satisfies
\begin{align*}
\frac{\delta E}{\delta \varphi_{i}} = \frac{\delta \hat{E}}{\delta \varphi_{T}} = - \beta \eps \Laplace \varphi_{T} + \beta \eps^{-1} W'(\varphi_{T}) =: \mu_{T},
\end{align*}
and so, if the chemical free energy density $N$ is independent of $\bm{\varphi}$, the corresponding equations for the chemical potentials for the tumour phases now read as
\begin{align*}
\mu_{1} = 0, \quad \mu_{i} = - \beta \eps \Laplace \varphi_{T} + \beta \eps^{-1} W'(\varphi_{T}) = \mu_{T} \text{ for } 2 \leq i \leq L.
\end{align*}
Then, choosing a second order mobility tensor $\TT{C}(\bm{\varphi}, \bm\sigma)$ 
such that $\sum_{j=2}^{L} \TT{C}_{ij}(\bm{\varphi}, \bm\sigma) = M(\varphi_{i})$, 
for $2 \leq i \leq L$, and
$\sum_{j=2}^{L} \TT{C}_{1j}(\bm{\varphi}, \bm\sigma) = - \sum_{j=2}^{L} M(\varphi_{j})$ 
for a non-negative mobility $M$, 
the equations for $\varphi_{i}$ take the form
\begin{align*}
\pd_{t}\varphi_{i} + \div (\varphi_{i} \vec{v}) & = \div (M(\varphi_{i}) \nabla \mu_{T}) + U_i(\bvarphi,\bm\sigma), \quad 2 \leq i \leq L,
\end{align*}
which resemble the system of equations studied in \cite{ChenLowen,CWSL,FJCWLC,WLFC,YLLL}.  Note in particular that only $\mu_{T}$ is needed to drive the evolution of $\varphi_{i}$, $2 \leq i \leq L$.  However, the mathematical treatment of these types of models is difficult due to the fact that the equation for $\varphi_{i}$ is now a transport equation with a high order source term $\div (M(\varphi_{i}) \nabla \mu_{T})$, and the natural energy identity of the model does not appear to yield useful a priori estimates for $\varphi_{i}$.  In the case that the mobility $M$ is a constant, the existence of a weak solution for the model of \cite{CWSL} has been studied by Dai et al.\ in \cite{Dai}.

The specific forms of the source terms $\bm{U}(\bvarphi,\bm\sigma)$ and $\bm{S}(\bvarphi,\bm\sigma)$ will depend on the specific situation we want to model.  In our numerical investigations, we will primarily focus on a three-component model consisting of host cells $(\varphi_{1})$, proliferating tumour cells $(\varphi_{2})$ and necrotic cells $(\varphi_{3})$ in the presence of a quasi-static nutrient $(\sigma)$, i.e., $L=3$ and $M=1$.  Of biological relevance are the following choices:
\begin{subequations}
\begin{alignat}{3}
S(\bm{\varphi}, \sigma) & = - \mathcal{C} \varphi_{2} \sigma \label{Intro:source:sigma}, \\
\bm{U}_{A}(\bm{\varphi}, \sigma) & = (0, \varphi_{2}(\mathcal{P} \sigma - \mathcal{A}), \mathcal{A} \varphi_{2} - D_{N} \varphi_{3})^{\top}, \label{Intro:source:1} \\
\bm{U}_{B}(\bm{\varphi}, \sigma) & = (- \varphi_{2} \mathcal{P} \sigma, \varphi_{2}(\mathcal{P} \sigma - \mathcal{A}), \mathcal{A} \varphi_{2} - D_{N} \varphi_{3})^{\top}, \label{Intro:source:1:alt} \\
\bm{U}_{C}(\bm{\varphi}, \sigma) & = (0, \eps^{-1} \varphi_{2}^{2}(1-\varphi_{2})^{2} (\mathcal{P} \sigma - \mathcal{A}), \eps^{-1} \varphi_{3}^{2}(1-\varphi_{3})^{2}(\mathcal{A} - D_{N}))^{\top}. \label{Intro:source:2}
\end{alignat}
\end{subequations}
The source term \eqref{Intro:source:sigma} models the consumption of nutrients by the proliferating cells at a constant rate $\mathcal{C} > 0$.  The choice \eqref{Intro:source:1} models the proliferation of tumour cells at a constant rate $\mathcal{P} > 0$ by consuming the nutrient, the apoptosis of the tumour cells at a constant rate $\mathcal{A} \geq 0$, which can be considered as a source term for the necrotic cells, and we assume that the necrotic cells degrade at constant rate $D_{N}$.  Meanwhile, in \eqref{Intro:source:1:alt}, any mass gain for the proliferating tumour equals the mass loss by the host cells, and vice versa for the necrotic and proliferating cells.  In \eqref{Intro:source:2}, the functions $\varphi_{2}^{2}(1-\varphi_{2})^{2}$ and $\varphi_{3}^{2}(1-\varphi_{3})^{2}$ are zero except near the vicinity of the interfacial layers.  The scaling with $\eps^{-1}$ is chosen similarly as in \cite{Kampmann}, which allows the source terms to influence the evolution of the interfaces, see Section \ref{sec:SIM:3component} below for more details.

In \eqref{Intro:multiphase}, the parameter $\eps$ is related to the thickness of the interfacial layers, and hence it is natural to ask if a sharp interface model will emerge in the limit $\eps \to 0$.  Due to the multi-component nature of \eqref{Intro:multiphase}, the sharp interface model consists of equations posed on time-dependent regions $\Omega_{i} = \{ \varphi_{i} = 1\}$ for $1 \leq i \leq L$ and on the free boundaries $\Gamma_{ij} = \pd \Omega_{i} \cap \pd \Omega_{j}$ for $1 \leq i < j \leq L$.  We refer the reader to Section \ref{sec:SIM} below for the multi-component sharp interface limit of \eqref{Intro:multiphase}, which is too complex to state here.  

Instead, we consider the system \eqref{Intro:3component} with a quasi-static nutrient (neglecting the left-hand side of \eqref{Intro:3component:sigma}), $\chi_{\varphi} = \chi_{n} = 0$ (so that $\bm{N}_{,\bm{\varphi}}(\bvarphi,\sigma) = \bm{0}$), $D(\bm{\varphi}, \sigma) = 1$, a mobility tensor $\TT{C}(\bm{\varphi}, \sigma) = (\delta_{ij} - \frac{1}{3})_{i,j=1}^3$, and the source term $S(\bm{\varphi}, \sigma) =  - \mathcal{C} \varphi_{2} \sigma$.  Then, \eqref{Intro:3component} simplifies to
\begin{subequations}\label{Intro:simplify:3compo}
\begin{alignat}{3}
\div \vec{v} & = \unit \cdot \bm{U}(\bm{\varphi}, \sigma), \quad \vec{v} = -K \nabla p + K(\nabla \bm{\varphi})^{\top} \bm{\mu}, \\
\pd_{t} \varphi_{1} + \div (\varphi_{1} \vec{v}) & = \Laplace y + U_1(\bm{\varphi}, \sigma), \\
\pd_{t} \varphi_{2} + \div (\varphi_{2} \vec{v}) &  = \Laplace z + U_2(\bm{\varphi}, \sigma), \\
\pd_{t}\varphi_{3} + \div(\varphi_{3} \vec{v}) & = - \Laplace (y+z) + U_3(\bm{\varphi}, \sigma), \\
\mu_{k} & = - \beta \eps \Laplace \varphi_{k} + \beta \eps^{-1} \Psi_{,\varphi_{k}}(\bvarphi), \quad k = 1,2,3,\\
0 & = \Laplace \sigma - \mathcal{C} \varphi_{2} \sigma,
\end{alignat}
\end{subequations}
where $\bm{U} = (U_{1}, U_{2}, U_{3})^{\top}$,
\begin{align*}
3y = (\mu_{1} - \mu_{2}) + (\mu_{1} - \mu_{3}), \quad 3z = - (\mu_{1} - \mu_{2}) + (\mu_{2} - \mu_{3}),
\end{align*} 
and we note that $3(y + z) = (\mu_{1} - \mu_{3}) + (\mu_{2} - \mu_{3})$ and hence diffusion is governed by the difference of chemical potentials, see also \cite{BronsardGarckeStoth98}.  Let us denote $\Omega_{H} = \{ \varphi_{1} = 1, \varphi_{2} = \varphi_{3} = 0 \}$, $\Omega_{P} = \{ \varphi_{2} = 1, \varphi_{1} = \varphi_{3} = 0\}$, $\Omega_{N} = \{ \varphi_{3} = 1, \varphi_{1} = \varphi_{2} = 0\}$ as the regions of host cells, proliferating tumour and necrotic cells, respectively, along with interfaces $\Gamma_{PN} = \pd \Omega_{P} \cap \pd \Omega_{N}$ and $\Gamma_{HP} = \pd \Omega_{H} \cap \pd \Omega_{P}$.  Note that it makes no sense for the host cells to share a boundary with the necrotic cells, and thus $\Gamma_{HN} = \emptyset$.  Then, the sharp interface limit of \eqref{Intro:simplify:3compo} reads as (see Section \ref{sec:SIM:3component} for a derivation)
\begin{subequations}\label{Intro:SIM}
\begin{alignat}{3}
\Laplace \sigma  = \begin{cases}
0 & \text{ in } \Omega_{H} \cup \Omega_{N}, \\
\mathcal{C} \sigma & \text{ in } \Omega_{P},
\end{cases} \quad \begin{cases}
-\Laplace y = U_1(\bm{\varphi}, \sigma) - (\unit \cdot \bm{U}(\bm{\varphi}, \sigma)) \varphi_{1} , \\
-\Laplace z = U_2(\bm{\varphi}, \sigma) - (\unit \cdot \bm{U}(\bm{\varphi}, \sigma)) \varphi_{2} , \\
-K \Laplace p = \unit \cdot \bm{U}(\bm{\varphi}, \sigma) \end{cases} & \text{ in } \Omega_{H} \cup \Omega_{P} \cup \Omega_{N}, \label{Intro:SIM:bulk} \\
[y] = [z] = [\sigma] = [\nabla \sigma] \cdot \vec{\nu} = [\nabla p] \cdot \vec{\nu} = 0 & \text{ on } \Gamma_{PN} \cup \Gamma_{HP}, \\
\jump{p}{N}{P} = \beta \gamma_{PN} \kappa = 2 y - z, \, -\mathcal{V} + K \nabla p \cdot \vec{\nu} = \jump{\nabla z}{N}{P} \cdot \vec{\nu}, \, 0 = \jump{\nabla y}{N}{P} \cdot \vec{\nu} & \text{ on } \Gamma_{PN}, \\
\jump{p}{P}{H} = \beta \gamma_{HP} \kappa = y - z , \, - \mathcal{V} + K \nabla p \cdot \vec{\nu} = \jump{\nabla y}{P}{H} \cdot \vec{\nu} = - \jump{\nabla z}{P}{H} \cdot \vec{\nu} & \text{ on } \Gamma_{HP}.
\end{alignat}
\end{subequations}
In the above $\vec{\nu}$ denotes the unit normal on $\Gamma_{PN}$ pointing into $\Omega_{P}$ or the unit normal on $\Gamma_{HP}$ pointing into $\Omega_{H}$, $\kappa$ is the mean curvature, $\gamma_{PN}$ and $\gamma_{HP}$ are positive constants related the potential $\Psi$, $\mathcal{V}$ denotes the normal velocity of $\Gamma_{PN}$ or $\Gamma_{HP}$, and $[\cdot]$ denotes the jump across the interfaces.  Let us point out that for the choice \eqref{Intro:source:1} of $\bm{U}(\bvarphi,\sigma)$, equation \eqref{Intro:SIM:bulk} becomes
\begin{align*}
- \Laplace y = 0 \text{ in } \Omega_{H} \cup \Omega_{P} \cup \Omega_{N}, \quad - \Laplace z = \begin{cases}
0 \text{ in } \Omega_{H} \cup \Omega_{N}, \\
-\mathcal{A} \text{ in } \Omega_{P},
\end{cases} \, -K \Laplace p = \begin{cases}
0 \text{ in } \Omega_{H}, \\
\mathcal{P} \sigma \text{ in } \Omega_{P}, \\
-D_{N} \text{ in } \Omega_{N},
\end{cases}
\end{align*}
and for the choice \eqref{Intro:source:1:alt} of $\bm{U}(\bm{\varphi}, \sigma)$, equation \eqref{Intro:SIM:bulk} becomes
\begin{align*}
-\Laplace y = \begin{cases}
-\mathcal{P} \sigma \text{ in } \Omega_{P},  \\
0 \text{ in } \Omega_{H} \cup \Omega_{N},
\end{cases} \, -\Laplace z = \begin{cases}
\mathcal{P} \sigma -\mathcal{A} \text{ in } \Omega_{P}, \\
0 \text{ in } \Omega_{H} \cup \Omega_{H}, 
\end{cases} \, -K \Laplace p = \begin{cases}
0 \text{ in } \Omega_{H} \cup \Omega_{P}, \\
-D_{N} \text{ in } \Omega_{N}.
\end{cases}
\end{align*}
Note that the overall gain or loss in mass is reflected in the equation for $p$, compare \cite[\S 4.6]{book:CristiniLowengrub}.

In contrast, multi-component models obtained from a degenerate interfacial energy such as \eqref{deg:GL} have simpler sharp interface limits.  Due to the fact that \eqref{deg:GL} is a function only of $\varphi_{T}$, the asymptotic analysis leads to a sharp interface limit which is defined on two time-dependent regions $\Omega_{T} = \{\varphi_{T} = 1\}$ (tumour) and $\Omega_{H} = \Omega \setminus \Omega_{T}$ (host), and one free boundary $\Gamma = \pd \Omega_{T}$.  In particular, differentiation between the different types of tumour cells is based on the local density of nutrients \cite{MacklinLowengurb06, MacklinLowengrub07, MacklinLowengurb08, ZhengWC}, unlike in \eqref{Intro:SIM} where an evolution law for the interface $\Gamma_{PN}$ between the proliferating and necrotic cells is stated.  We refer the reader also to Section \ref{sec:SIM:degGL} below for the sharp interface limit of a model with degenerate interfacial energy.

Let us now give a non-exhaustive comparison between the multi-component diffuse interface models in the literature and the model \eqref{Intro:multiphase} we propose in this work.

\paragraph{Interfacial energy/cellular adhesion.}
In \cite{ChenLowen, CWSL, FJCWLC, LimaStochastic, LimaM3AS, WLFC, YLLL}, it is assumed that the different types of tumour cells prefer to adhere to one another instead of the host cells, and thus the degenerate interfacial energy density \eqref{deg:GL} is considered.  This is in contrast to Oden et al.\ \cite{OHP} and our present work, where the adhesive properties of different cell types are distinct and the total energy \eqref{Intro:energy} is considered.  Furthermore, we point out that the model of Xu et al.\ \cite{Xu} can be seen as a two-phase model (tumour and host cells), which uses an interfacial energy similar to \eqref{deg:GL}.  But they use a non-conserved phase field equation of Allen--Cahn type, rather than a Cahn--Hilliard equation, to describe the tumour evolution.

\paragraph{Mixture velocity.} In \cite{ChenLowen,CWSL,FJCWLC,OHP,WLFC,YLLL} a mass-averaged velocity is used instead of the volume-averaged velocity considered in our present approach and also in \cite{PreziosiTosin}.  Meanwhile, in \cite{LimaStochastic,LimaM3AS} the velocities of the cell components are assumed to be negligible.

\paragraph{Source terms.}  Aside from mitosis proportional to the local density of nutrients, and constant apoptosis for the tumour cells, certain sink terms for one cell type become source terms for another, for example the term $\mathcal{A} \varphi_{2}$ in \eqref{Intro:source:1}.  It is commonly assumed that the host cells are homeostatic \cite{ChenLowen,CWSL,FJCWLC,YLLL,WLFC}, and so the source term for the host cells is zero.  In \cite{LimaStochastic,LimaM3AS}, where quiescent cells are also considered, a two-sided exchange between the proliferating cells and the quiescent cells, and a one-sided exchange from quiescent cells to necrotic cells based on local nutrient concentration are included.   However, to the best of our knowledge, source terms of the form \eqref{Intro:source:2} have not yet been considered in the multi-component setting.

\paragraph{Sharp interface limit.}  Out of the aforementioned references, only Wise et al.\ \cite{WLFC} state a sharp interface limit for a multi-component diffuse interface model with degenerate interfacial energy \eqref{deg:GL}. 

\bigskip

The remainder of this paper is organised as follows:  In Section \ref{sec:derive} we derive the diffuse interface model \eqref{Intro:multiphase} from thermodynamic principles.  In Section \ref{sec:SIM} we perform a formal asymptotic analysis to derive the sharp interface limit.  In Section \ref{sec:numerics} we present some numerical simulations for the three-component tumour model derived in this paper.

\section{Model derivation}\label{sec:derive}
Let us consider a mixture consisting of $L \geq 2$ cell components in an open, bounded domain $\Omega \subset \R^{d}$, $d = 1,2,3$.  Moreover, we allow for the presence of $M \geq 1$ chemical species in $\Omega$.  Let $\rho_{i}$, $i = 1, \dots, L$, denote the actual mass of the matter of the $i$th component per volume in the mixture, and let $\bar{\rho}_{i}, i = 1, \dots, L$, be the mass density of a pure component $i$.  Then the sum $\rho = \sum_{i=1}^{L} \rho_{i}$ denotes the mixture density (which is not necessarily constant), and we define the volume fraction of component $i$ as
\begin{align}\label{defn:volumefrac}
\varphi_{i} = \frac{\rho_{i}}{\bar{\rho}_{i}}.
\end{align}
We expect that physically, $\rho_{i} \in [0, \bar{\rho}_{i}]$ and thus $\varphi_{i} \in [0,1]$.  Furthermore we allow for mass exchange between the components, but there is no external volume compartment besides the $L$ components, i.e.,
\begin{align}
\label{varphisum1}
\sum_{i=1}^{L} \varphi_{i} = 1.
\end{align}
For the mixture velocity we consider the volume-averaged velocity
\begin{align}
\label{defn:volavervelo}
\vec{v} = \sum_{i=1}^{L} \varphi_{i} \vec{v}_{\varphi_{i}},
\end{align}
where $\vec{v}_{\varphi_{i}}$ is the individual velocity of component $i$, and we denote the density of the $j$th chemical species as $\sigma_{j}$, $j = 1, \dots, M$, where each chemical species is transported by the volume-averaged mixture velocity and a flux $\vec {J}_{\sigma_{j}}$, $j = 1, \dots, M$.

\subsection{Balance laws}
The balance law for the mass of each component reads as
\begin{align}\label{proto:individualmass}
\pd_{t} \rho_{i} + \div (\rho_{i} \vec{v}_{\varphi_{i}}) = \mathcal{U}_{i}, \quad i = 1, \dots, L,
\end{align}
where $\mathcal{U}_{i}$ denotes a source/sink term for the $i$th component.  Using \eqref{defn:volumefrac} we have
\begin{align}\label{proto:varphiiequ}
\pd_{t} \varphi_{i} + \div (\varphi_{i} \vec{v}_{\varphi_{i}}) = \frac{\mathcal{U}_{i}}{\overline{\rho}_{i}}, \quad i = 1, \dots, L.
\end{align}
Upon adding and using \eqref{defn:volavervelo} and \eqref{varphisum1}, we obtain an equation for the volume-averaged velocity:
\begin{align}\label{proto:div}
\div \vec{v} = \sum_{i=1}^{L} \div (\varphi_{i} \vec{v}_{\varphi_{i}}) = \sum_{i=1}^{L} \frac{\mathcal{U}_{i}}{\overline{\rho}_{i}}.
\end{align}
On recalling \eqref{defn:volavervelo}, we introduce the fluxes
\begin{align}
\label{defn:fluxesJ}
\vec{J}_{\varphi_{i}} = \rho_{i}( \vec{v}_{\varphi_{i}} - \vec{v}), \quad i = 1, \dots, L, \quad \vec{\mathcal{J}} = \sum_{i=1}^{L} \vec{J}_{\varphi_{i}},
\end{align}
so that from \eqref{proto:varphiiequ} we obtain
\begin{align}
\label{proto:varphiiequ:volavegedvelo}
\pd_{t} \varphi_{i} + \frac{1}{\overline{\rho}_{i}} \div \vec{J}_{\varphi_{i}} + \div (\varphi_{i} \vec{v}) = \frac{\mathcal{U}_{i}}{\overline{\rho}_{i}}, \quad i = 1, \dots, L.
\end{align}
Rewriting the mass balance \eqref{proto:individualmass} with $\vec{J}_{\varphi_{i}}$ and upon summing we obtain the following equation for the mixture density:
\begin{align}
\label{proto:mixturedensity}
\pd_{t} \rho + \div (\vec{\mathcal{J}} + \rho \vec{v}) = \sum_{i=1}^{L} \mathcal{U}_{i}.
\end{align}
Moreover, by summing \eqref{defn:fluxesJ} we obtain the requirement
\begin{align}
\label{Jioverlinerhoisum0}
\sum_{i=1}^{L} \frac{1}{\overline{\rho}_{i}} \vec{J}_{\varphi_{i}} = \sum_{i=1}^{L} \varphi_{i} (\vec{v}_{\varphi_{i}} - \vec{v}) = \vec{v} - \vec{v} = \vec{0}.
\end{align} 
For $j = 1, \dots, M$, we postulate the following balance law for the $j$th chemical species
\begin{align}
\label{proto:concentrationj}
\pd_{t} \sigma_{j} + \div (\sigma_{j} \vec{v}) + \div \vec J_{\sigma_{j}} = S_{j},
\end{align}
where $S_{j}$ denotes a source/sink term for the $j$th chemical species, $\sigma_{j} \vec{v}$ models the transport by the volume-averaged velocity and $\vec J_{\sigma_{j}}$ accounts for other transport mechanisms.  It is convenient to introduce the vector form of the balance laws \eqref{proto:varphiiequ:volavegedvelo} and \eqref{proto:concentrationj}.  Let 
\begin{equation}\label{defn:vectorform}
\begin{alignedat}{2}
\bm{\varphi} & = (\varphi_{1}, \dots, \varphi_{L})^{\top} \in \R^{L}, \quad  \bm{U} && = \left ( \overline{\rho}_{1}^{-1} \mathcal{U}_{1}, \dots,\overline{\rho}_{L}^{-1} \mathcal{U}_{L} \right )^{\top} \in \R^{L}, \\
 \bm{\sigma} & = (\sigma_{1}, \dots, \sigma_{M})^{\top} \in \R^{M}, \quad \bm{S} && = (S_{1}, \dots, S_{M})^{\top} \in \R^{M},
\end{alignedat}
\end{equation}
and
\begin{align}\label{fluxtensor}
\TT K_{\varphi}^{\top} = \left ( \overline{\rho}_{1}^{-1} \vec{J}_{\varphi_{1}}, \dots, \overline{\rho}_{L}^{-1} \vec{J}_{\varphi_{L}} \right ) \in \R^{d \times L} , \quad \TT K_{\sigma}^{\top} = (\vec J_{\sigma_{1}}, \dots, \vec J_{\sigma_{M}}) \in \R^{d \times M},
\end{align}
i.e., the $l$th row of $\TT K_{\varphi}$ is the flux $\overline{\rho}_{l}^{-1} \vec J_{\varphi_{l}} \in \R^{d}$ and the $l$th row of $\TT K_{\sigma}$ is the flux $\vec J_{\sigma_{l}} \in \R^{d}$.  We recall that the divergence applied to a second order tensor $\TT{A} \in \R^{k \times l}$ results in a vector in $\R^{k}$ whose $i$th component is the divergence of $(A_{ij})_{j=1}^{l}$, that is, $(\div \TT{A})_{i} = \sum_{j=1}^{l} \pd_{x_{j}} A_{ij}$.  Then, \eqref{proto:varphiiequ:volavegedvelo}, \eqref{proto:concentrationj} and \eqref{proto:div} become
\begin{align}
\label{proto:vector:varphi}
& \pd_{t} \bm{\varphi} + \div (\bm{\varphi} \otimes \vec{v}) + \div \TT K_{\varphi}  = \bm{U}, \\
\label{proto:vector:sigma}
& \pd_{t} \bm{\sigma} + \div ( \bm{\sigma} \otimes \vec{v}) + \div \TT K_{\sigma} = \bm{S}, \quad \div \vec{v} = \unit \cdot \bm{U},
\end{align}
respectively, where $\unit = (1, \dots, 1)^{\top} \in \R^{L}$.

\subsection{Energy inequality} \label{sec:22}
For $L \in \N$, $L\geq2$, we define
\begin{align}\label{SetsGibbs}
\HGibbs = \left \{  \bm{\phi} = (\phi_{1}, \dots, \phi_{L})^{\top} \in \R^{L} : \sum_{i=1}^{L} \phi_{i} = 1 \right \}, \quad \Gibbs = \left \{ \bm{\phi} \in \HGibbs : \phi_{i} \geq 0 \; \forall i \right \}.
\end{align}
The latter is also known as the Gibbs simplex.  The corresponding tangent space $T_{\bm{p}}\HGibbs$ can be identified as the space
\begin{align}\label{tangentspaceTG}
T_{\bm{p}} \HGibbs \cong \TangGibbs = \left \{ \bm{\psi} \in \R^{L} : \sum_{i=1}^{L} \psi_{i} = 0 \right \}.
\end{align}
We postulate a general free energy of Ginzburg--Landau form, i.e.,
\begin{align}\label{postulate:energy}
\mathcal{E}(\bm{\varphi}, \bm{\sigma})
= \int_\Omega e(\bm{\varphi}, \nabla \bm{\varphi}, \bm\sigma) \dx
= \int_\Omega A \Psi(\bm{\varphi}) + B a(\bm{\varphi}, \nabla \bm{\varphi})
+ N(\bm{\varphi}, \bm{\sigma}) \dx 
,
\end{align}
where $\bm{\varphi} = (\varphi_{1}, \dots, \varphi_{L})^{\top} \in \mathrm{G}$, $\bm{\sigma} = (\sigma_{1}, \dots, \sigma_{M})^{\top}$ and $\nabla \bm{\varphi} = ( \pd_{x_{k}} \varphi_{i} )_{1 \leq i \leq N, 1 \leq k \leq d}$. 
Here $A, B > 0$ are constants, $a: \Gibbs \times (\TangGibbs)^{d} \to \R$ is a smooth gradient energy density and $\Psi: \Gibbs \to \R_{\geq 0}$ is a smooth multi-well potential with exactly $L$ equal minima at the points $\bm{e}_{l}$, $l = 1, \dots, L$, where $\bm{e}_{l} = ( \delta_{lm} )_{m=1}^{L}$ is the $l$th unit vector in $\R^{L}$.  In particular, the minima of $\Psi$ are the corners of the Gibbs simplex $\Gibbs$. 
The first two terms in the integral in \eqref{postulate:energy} account for interfacial energy and unmixing tendencies, and the term $N(\bm{\varphi}, \bm{\sigma})$ accounts for the chemical energy of the species and any energy contributions resulting from the interactions between the cells and the chemical species.

Recalling the vector $\unit = (1, \dots, 1)^{\top} \in \R^{L}$, we now introduce the projection operator $\Proj{}$ to the tangent space $\TangGibbs$ as follows:
\begin{align}\label{projectionop}
\Proj{\bm{f}} = \bm{f} - \frac{1}{L} (\unit \cdot \bm{f}) \unit
\end{align}
for a vector $\bm{f} \in \R^{L}$.  For a second order tensor $\TT{A} \in \R^{L \times d}$ we define the $(i,j)$th component  of its projection to be
\begin{align*}
(\Proj{\TT{A}})_{ij} = A_{ij} - \frac{1}{L} \sum_{k=1}^{L} A_{kj}.
\end{align*}
We now derive a diffuse interface model based on a dissipation inequality for the balance laws in \eqref{proto:vector:varphi} and \eqref{proto:vector:sigma}.  We point out that balance laws with source terms have been used similarly by Gurtin \cite{Gurtin89,Gurtin96} and Podio-Guidugli \cite{Podio06} to derive phase field and Cahn--Hilliard type equations.  These authors used the second law of thermodynamics which in an isothermal situation is formulated as a free energy inequality.  

The second law of thermodynamics in the isothermal situation requires that for all volumes $V(t) \subset \Omega$, which are transported with the fluid velocity, the following inequality has to hold (see \cite{Gurtin89, Gurtin96,  Podio06} and \cite[Chapter 62]{GurtinFriedAnand}) 
\begin{align*}
\frac{\dd}{\dt} \int_{V(t)} e(\bm{\varphi}, \nabla \bm{\varphi},\bm{\sigma}) \dx \leq - \int_{\pd V(t)} \vec J_{e} \cdot \vec\nu \dHaus + \int_{V(t)} \left ( \bm{c}_{\varphi} + c_{v} \unit \right ) \cdot \bm{U} + \bm{c}_{\sigma} \cdot \bm{S} \dx,
\end{align*}
where $\dHaus$ denotes integration with respect to the $d-1$ dimensional Hausdorff measure, $\vec\nu$ is the outer unit normal to $\pd V(t)$, $\vec J_{e}$ is an energy flux yet to be specified, and we have postulated that the source terms $\bm{U}$ and $\bm{S}$ carry with them a supply of energy described by
\begin{align}
\int_{V(t)} \left ( \bm{c}_{\varphi} + c_{v} \unit \right ) \cdot \bm{U} + \bm{c}_{\sigma} \cdot \bm{S} \dx,
\end{align}
for some $\bm{c}_{\varphi} \in \TangGibbs$, $c_{v} \in \R$ and $\bm{c}_{\sigma} \in \R^{M}$ yet to be determined.  

Applying the transport theorem and the divergence theorem, we obtain the following local form
\begin{align}
\pd_{t} e + \div (e \vec{v}) + \div \vec J_{e} - \left ( \bm{c}_{\varphi} + c_{v} \unit \right ) \cdot \bm{U} - \bm{c}_{\sigma} \cdot \bm{S} \leq 0.
\end{align}
We now use the Lagrange multiplier method of Liu and M\"{u}ller (\cite[Section 2.2]{AGG} and \cite[Chapter 7]{book:Liu}).  Let $\bm{\lambda}_{\varphi} \in \TangGibbs$, $\bm{\lambda}_{\sigma} \in \R^{M}$ and $\lambda_{v} \in \R$ denote the Lagrange multipliers for the equations in \eqref{proto:vector:varphi} and \eqref{proto:vector:sigma}, respectively.  Then, we require that the following inequality holds for arbitrary $\bm{\varphi} \in \Gibbs$, $\pd_{t} \bm{\varphi} \in \TangGibbs$, $\nabla \bm{\varphi} \in (\TangGibbs)^{d}$, $\bm{\sigma}, \pd_{t} \bm{\sigma} \in \R^{M}$, $\nabla \bm{\sigma} \in \R^{M \times d}$, $\vec{v} \in \R^{d}$, $\bm{U} \in \R^{L}$, and $\bm{S} \in \R^{M}$:
\begin{equation}\label{energydissipationineq}
\begin{aligned}
-\mathcal{D} & = \pd_{t} e +  \vec{v} \cdot \nabla e + e \, \div \vec{v} + \div \vec J_{e} - \left ( \bm{c}_{\varphi} + c_{v} \unit \right ) \cdot \bm{U} - \bm{c}_{\sigma} \cdot \bm{S} \\
& \quad - \bm{\lambda}_{\varphi} \cdot ( \pd_{t} \bm{\varphi} + (\nabla \bm{\varphi}) \vec{v} +  (\div \vec{v}) \bm{\varphi} + \div \TT K_{\varphi} - \bm{U}) \\
& \quad - \bm{\lambda}_{\sigma} \cdot ( \pd_{t} \bm{\sigma} + (\nabla \bm{\sigma}) \vec{v} +  (\div \vec{v}) \bm{\sigma} + \div \TT K_{\sigma} - \bm{S}) \\
& \quad - \lambda_{v} (\div \vec{v} - \unit \cdot \bm{U}) \leq 0.
\end{aligned}
\end{equation}
Using the identities
\begin{align*}
[(\nabla \bm{\varphi}) \vec{v}]_{i} =  \sum_{k=1}^{d} \pd_{x_{k}} \varphi_{i} v_{k}, \; \md \bm{\varphi} = \pd_{t} \bm{\varphi} + (\nabla \bm{\varphi}) \vec{v}, \; \bm{\lambda} \cdot \div (\bm{\varphi} \otimes \vec{v}) = \bm{\lambda} \cdot (\nabla \bm{\varphi}) \vec{v} + (\bm{\lambda} \cdot \bm{\varphi}) \div \vec{v},
\end{align*}
and the product rule 
\begin{align}\label{tensorproductrule}
\div ( \TT K^{\top} \bm{\lambda} ) = \TT K : \nabla \bm{\lambda} + (\div \TT K) \cdot \bm{\lambda},
\end{align}
where for two tensors $\TT{A}$ and $\TT{B}$, the product $\TT{A} : \TT{B}$ is defined as $\TT{A} : \TT{B} = \tr{\TT{A}^{\top} \TT{B}}$, we arrive at
\begin{equation}\label{dissipation1}
\begin{aligned}
-\mathcal{D} & = \div \left ( \vec J_{e} - \TT K_{\varphi}^{\top} \bm{\lambda}_{\varphi} - \TT K_{\sigma}^{\top} \bm{\lambda}_{\sigma} \right ) + ( B a_{,\bm{\varphi}} + A \Psi_{,\bm{\varphi}} + \bm{N}_{,\bm{\varphi}} - \bm{\lambda}_{\varphi} )\cdot \md \bm{\varphi} + (\bm{N}_{,\bm{\sigma}} - \bm{\lambda}_{\sigma}) \cdot \md \bm{\sigma}  \\
& \quad + \bm{U} \cdot (\bm{\lambda}_{\varphi} - \bm{c}_{\varphi} + (\lambda_{v} - c_{v}) \unit ) + \bm{S} \cdot (\bm{\lambda}_{\sigma} - \bm{c}_{\sigma}) + \TT K_{\varphi} : \nabla \bm{\lambda}_{\varphi} + \TT K_{\sigma} : \nabla \bm{\lambda}_{\sigma}\\
& \quad + B \sum_{i=1}^{L} \sum_{j,k=1}^{d} (a_{,\pd_{k} \varphi_{i}} ) [ \pd_{t} \pd_{x_{k}} \varphi_{i} + v_{j} \pd_{x_{j}} \pd_{x_{k}} \varphi_{i}] + (\div \vec{v}) (e  - \bm{\lambda}_{\varphi} \cdot \bm{\varphi} - \bm{\lambda}_{\sigma} \cdot \bm{\sigma} - \lambda_{v}),
\end{aligned}
\end{equation}
where
\begin{align*}
\bm{N}_{,\bm{\varphi}} = \left ( \frac{\pd N}{\pd \varphi_{1}}, \dots, \frac{\pd N}{\pd \varphi_{L}} \right )^{\top} \in \R^{L}, \quad \bm{N}_{,\bm{\sigma}} = \left ( \frac{\pd N}{\pd \sigma_{1}}, \dots, \frac{\pd N}{\pd \sigma_{M}} \right )^{\top} \in \R^{M}.
\end{align*}
We can rewrite the term involving $(a_{,\nabla \bm{\varphi}})_{ik} = \frac{\pd a}{\pd (\pd_{k} \varphi_{i})} $ as follows (using the notation $\md \varphi_{i} = \pd_{t}\varphi_{i} + \vec{v} \cdot \nabla \varphi_{i}$):
\begin{align*}
& \sum_{i=1}^{L} \sum_{k=1}^{d} (a_{,\pd_{k} \varphi_{i}} ) \left [ \pd_{t} \pd_{x_{k}} \varphi_{i} + \sum_{j=1}^{d} v_{j} \pd_{x_{j}} \pd_{x_{k}} \varphi_{i} \right ]  \\
& \; = \sum_{k=1}^{d} \sum_{i=1}^{L} \left (  \pd_{x_{k}} \left ( a_{,\pd_{k} \varphi_{i}} \pd_{t}\varphi_{i} \right ) - \pd_{x_{k}} a_{,\pd_{k} \varphi_{i}}  \pd_{t}\varphi_{i} + \sum_{j=1}^{d} \left ( v_{j} \pd_{x_{k}} \left ( a_{,\pd_{x} \varphi_{i}} \pd_{x_{j}} \varphi_{i} \right ) - v_{j} \pd_{x_{j}} \varphi_{i} \pd_{x_{k}} a_{,\pd_{k} \varphi_{i}} \right ) \right ) \\
 & \; = \div ( (a_{, \nabla \bm{\varphi}})^{\top} \pd_{t} \bm{\varphi} ) - \div (a_{,\nabla \bm{\varphi}}) \cdot \md \bm{\varphi} +  \vec{v} \cdot \div ( (\nabla \bm{\varphi})^{\top} (a_{,\nabla \bm{\varphi}})) .
\end{align*}
Applying the product rule on the term involving $\div \vec{v}$ we get
\begin{align*}
(\div \vec{v})(e - \bm{\lambda}_{\varphi} \cdot \bm{\varphi} - \bm{\lambda}_{\sigma} \cdot \bm{\sigma} - \lambda_{v}) & = \div ( (e - \bm{\lambda}_{\varphi} \cdot \bm{\varphi} - \bm{\lambda}_{\sigma} \cdot \bm{\sigma} - \lambda_{v}) \vec{v}) \\
& \quad - \vec{v} \cdot \nabla (e - \bm{\lambda}_{\varphi} \cdot \bm{\varphi} - \bm{\lambda}_{\sigma} \cdot \bm{\sigma} - \lambda_{v}).
\end{align*}
Thus, substituting the above into the expression \eqref{dissipation1} we obtain
\begin{equation}\label{dissipation2}
\begin{aligned}
-\mathcal{D} & = \div \left ( \vec J_{e} - \TT K_{\varphi}^{\top} \bm{\lambda}_{\varphi} - \TT K_{\sigma}^{\top} \bm{\lambda}_{\sigma} + B(a_{,\nabla \bm{\varphi}})^{\top} \pd_{t} \bm{\varphi} + (e - \bm{\lambda}_{\varphi} \cdot \bm{\varphi} - \bm{\lambda}_{\sigma} \cdot \bm{\sigma} - \lambda_{v}) \vec{v} \right ) \\
& \quad - \vec{v} \cdot \left [  \nabla (e - \bm{\lambda}_{\varphi} \cdot \bm{\varphi} - \bm{\lambda}_{\sigma} \cdot \bm{\sigma} - \lambda_{v}) - B \div (( \nabla \bm{\varphi})^{\top} (a_{,\nabla \bm{\varphi}})) \right ] \\
& \quad + ( B a_{,\bm{\varphi}} - B \div (a_{,\nabla \bm{\varphi}}) + A \Psi_{,\bm{\varphi}} + \bm{N}_{,\bm{\varphi}} - \bm{\lambda}_{\varphi} )\cdot \md \bm{\varphi} + (\bm{N}_{,\bm{\sigma}} - \bm{\lambda}_{\sigma}) \cdot \md \bm{\sigma}  \\
& \quad + \bm{U} \cdot (\bm{\lambda}_{\varphi} - \bm{c}_{\varphi}+  (\lambda_{v} - c_{v}) \unit ) + \bm{S} \cdot (\bm{\lambda}_{\sigma} - \bm{c}_{\sigma}) + \TT K_{\varphi} : \nabla \bm{\lambda}_{\varphi} + \TT K_{\sigma} : \nabla \bm{\lambda}_{\sigma}.
\end{aligned}
\end{equation}

\subsection{Constitutive assumptions and the general model}
We define the vector of chemical potentials $\bm{\mu}$ to be
\begin{align}
\label{chempotential}
\bm{\mu} = B a_{,\bm{\varphi}}(\bvarphi,\nabla\bvarphi) - B \div (a_{,\nabla \bm{\varphi}}(\bvarphi,\nabla\bvarphi)) + A \Psi_{,\bm{\varphi}}(\bvarphi) + \bm{N}_{,\bm{\varphi}}(\bvarphi,\bm\sigma),
\end{align}
and by the definition \eqref{projectionop} of the projection operator $\Proj{}$, we have
\begin{align*}
(\bm{\mu} - \bm{\lambda}_{\varphi}) \cdot \md \bm{\varphi} & = \Proj{(\bm{\mu} - \bm{\lambda}_{\varphi})} \cdot \md \bm{\varphi} + \frac{1}{L} ((\bm{\mu} - \bm{\lambda}_{\varphi}) \cdot \unit) \unit \cdot \md \bm{\varphi} =  \Proj{(\bm{\mu} - \bm{\lambda}_{\varphi})} \cdot \md \bm{\varphi} 
\end{align*}
as $\md \bm{\varphi} \in \TangGibbs$ and $\md \bm{\varphi} \cdot \unit = 0$.  Furthermore, from \eqref{Jioverlinerhoisum0} we find that $\TT{K}_{\varphi} : \nabla \bm{\lambda}_{\varphi} = \TT{K}_{\varphi} : \nabla (\Proj{\bm{\lambda}_{\varphi}})$, and so \eqref{dissipation2} can be simplified to
\begin{equation}\label{dissipation3}
\begin{aligned}
-\mathcal{D} & = \div \left ( \vec J_{e} - \TT K_{\varphi}^{\top} \bm{\lambda}_{\varphi} - \TT K_{\sigma}^{\top} \bm{\lambda}_{\sigma} + B(a_{,\nabla \bm{\varphi}})^{\top} \pd_{t} \bm{\varphi} + (e - \bm{\lambda}_{\varphi} \cdot \bm{\varphi} - \bm{\lambda}_{\sigma} \cdot \bm{\sigma} - \lambda_{v}) \vec{v} \right ) \\
&\quad  + \TT K_{\varphi} : \nabla (\Proj{\bm{\lambda}_{\varphi}}) + \TT K_{\sigma} : \nabla \bm{\lambda}_{\sigma} + \Proj{(\bm{\mu} - \bm{\lambda}_{\varphi})} \cdot \md \bm{\varphi} \\
& \quad + (\bm{N}_{,\bm{\sigma}} - \bm{\lambda}_{\sigma}) \cdot \md \bm{\sigma} + \bm{U} \cdot (\bm{\lambda}_{\varphi} - \bm{c}_{\varphi}+  (\lambda_{v} - c_{v}) \unit ) + \bm{S} \cdot (\bm{\lambda}_{\sigma} - \bm{c}_{\sigma}) \\
& \quad - \vec{v} \cdot \left [  \nabla (e - \bm{\lambda}_{\varphi} \cdot \bm{\varphi} - \bm{\lambda}_{\sigma} \cdot \bm{\sigma} - \lambda_{v}) - B \div (( \nabla \bm{\varphi})^{\top} (a_{,\nabla \bm{\varphi}})) \right ].
\end{aligned}
\end{equation}
Based on \eqref{dissipation3} we make the following constitutive assumptions,
\begin{subequations}\label{assump:constitutive}
\begin{align}
\vec J_{e} & =  \TT K_{\varphi}^{\top} \bm{\lambda}_{\varphi}  + \TT K_{\sigma}^{\top} \bm{\lambda}_{\sigma} - B (a_{,\nabla \bm{\varphi}}(\bm{\varphi}, \nabla \bm{\varphi}))^{\top}  \pd_{t} \bm{\varphi} \label{constitutive:Je} \\
& \notag \quad - (e(\bm{\varphi}, \nabla \bm{\varphi}, \bm{\sigma}) - \bm{\lambda}_{\varphi} \cdot \bm{\varphi} - \bm{\lambda}_{\sigma} \cdot \bm{\sigma} - \lambda_{v} ) \vec{v} ,  \\
\bm{c}_{\sigma} & = \bm{\lambda}_{\sigma} = \bm{N}_{,\bm{\sigma}}(\bm{\varphi}, \bm{\sigma}), \quad \bm{c}_{\varphi} = \bm{\lambda}_{\varphi}, \quad \bm{\lambda}_{\varphi} = \Proj{\bm{\mu}}, \quad c_{v} = \lambda_{v}, \label{constitutive:c} \\
\TT K_{\sigma} & = -\TT{D}(\bm{\varphi}, \bm{\sigma}) \nabla \bm{N}_{,\bm{\sigma}}(\bm{\varphi}, \bm{\sigma}), \quad \TT K_{\varphi} = -\TT{C}(\bm{\varphi}, \bm{\sigma}) \nabla (\Proj{\bm{\mu}}),\label{constitutive:fluxes} 
\end{align}
\end{subequations}
where $\TT{C}(\bm{\varphi}, \bm{\sigma}) \in \R^{L \times L}$ and $\TT{D}(\bm{\varphi}, \bm{\sigma}) \in R^{M \times M}$ are non-negative second order mobility tensors such that
\begin{align}
\label{Compatibility:symmetry=0}
\sum_{i=1}^{L} \TT{C}_{ik}(\bm{\varphi}, \bm{\sigma}) = 0 \quad \text{ for all } \bm{\varphi} \in \Gibbs, \; \bm{\sigma} \in \R^{M}, \text{ and } 1 \leq k \leq L.
\end{align}
Here, by a non-negative second order tensor $\TT{A} \in \R^{L \times L}$, we mean that for all $\bm{b} \in \R^{L}$, $\bm{b} \cdot \TT{A} \bm{b} \geq 0$ and $\bm{b} \cdot \TT{A} \bm{b} = 0$ if and only if $\bm{b} = \bm{0}$.  Recalling the definition of $\TT K_{\varphi}$ and $\TT K_{\sigma}$ from \eqref{fluxtensor}, we see that for $1 \leq m \leq d$, the $m$th component of the fluxes $\vec{J}_{\varphi_{i}}$ and $\vec J_{\sigma_{j}}$ are given as
\begin{align*}
\frac{1}{\overline{\rho}_{i}} (\vec{J}_{\varphi_{i}})_{m} = -\sum_{k=1}^{L} \TT{C}_{ik}(\bm{\varphi}, \bm{\sigma}) \pd_{x_{m}} (\Proj{\bm{\mu}})_{k}, \quad (\vec J_{\sigma_{j}})_{m} = -\sum_{k=1}^{M}  \TT{D}_{jk}(\bm{\varphi}, \bm{\sigma}) \pd_{x_{m}} \left (\frac{\pd N}{\pd \sigma_{k}} \right ).
\end{align*}
Then, the constraint \eqref{Jioverlinerhoisum0} requires
\begin{align}\label{mobilityconstraint}
 \sum_{i=1}^{L} \sum_{k=1}^{L}  \TT{C}_{ik}(\bm{\varphi}, \bm{\sigma}) \pd_{x_{m}} (\Proj{\bm{\mu}})_{k} = 0  \quad \forall 1 \leq m \leq d,
\end{align}
which is satisfied when the constitutive assumption \eqref{Compatibility:symmetry=0} is considered.  We point out that one may take $\TT{D}(\bm{\varphi}, \bm{\sigma}) \in \R^{M \times d \times M \times d}$ as a non-negative fourth order mobility tensor, that is, $\TT{D}\, \TT{A} : \TT{A} \geq 0$ and $\TT{D}\, \TT{A} : \TT{A} = 0$ if and only if $\TT{A} = \TT{0}$ for any second order tensors $\TT{A} \in \R^{M \times d}$.  If we also consider $\TT{C}(\bm{\varphi}, \bm{\sigma}) \in \R^{L \times d \times L \times d}$ as a fourth order tensor, then \eqref{Compatibility:symmetry=0} becomes
\begin{align}\label{Compatibility:fourthorder}
\sum_{i=1}^{L} \TT{C}_{imkl}(\bm{\varphi}, \bm{\sigma}) = 0 \quad \text{ for all } \bm{\varphi} \in \Gibbs, \; \bm{\sigma} \in \R^{M}, \text{ and } 1\leq m,l \leq d,  1 \leq k \leq L,
\end{align}
and for $1 \leq m \leq d$, the $m$th component of the fluxes $\vec{J}_{\varphi_{i}}$ and $\vec J_{\sigma_{j}}$ are given as
\begin{align*}
\frac{1}{\overline{\rho}_{i}} (\vec{J}_{\varphi_{i}})_{m} = -\sum_{k=1}^{L} \sum_{l=1}^{d} \TT{C}_{imkl}(\bm{\varphi}, \bm{\sigma}) \pd_{x_{l}} (\Proj{\bm{\mu}})_{k}, \quad (\vec J_{\sigma_{j}})_{m} = -\sum_{k=1}^{M} \sum_{l=1}^{d} \TT{D}_{jmkl}(\bm{\varphi}, \bm{\sigma}) \pd_{x_{l}} \left (\frac{\pd N}{\pd \sigma_{k}} \right ).
\end{align*}
Note that, from \eqref{dissipation3} and the arbitrariness of $\bm{U}$, we require the prefactor $\bm{\lambda}_{\varphi} - \bm{c}_{\varphi} + (\lambda_{v} - c_{v}) \unit$ to vanish.  Since $\bm{\lambda}_{\varphi}, \bm{c}_{\varphi} \in \TangGibbs$ and the vector $(\lambda_{v} - c_{v}) \unit$ is orthogonal to $\TangGibbs$ this leads to the consideration $\bm{\lambda}_{\varphi} = \bm{c}_{\varphi}$ and $\lambda_{v} = c_{v}$ in \eqref{constitutive:c}.  We introduce a pressure-like function $p$ and choose
\begin{align}
\label{constitutive:pressure}
\lambda_{v} = p - B a(\bm{\varphi}, \nabla \bm{\varphi}) - A \Psi(\bm{\varphi}) + e(\bvarphi,\nabla\bvarphi,\bm\sigma) - \Proj{\bm{\mu}} \cdot \bm{\varphi} - \bm{N}_{,\bm{\sigma}}(\bvarphi,\bm\sigma) \cdot \bm{\sigma}, 
\end{align}
and, for a positive constant $K$,
\begin{equation}\label{constutitive:velo:1}
\begin{aligned}
\vec{v} & = K \left ( \nabla (e(\bm{\varphi}, \nabla \bm{\varphi}, \bm{\sigma}) - \Proj{\bm{\mu}} \cdot \bm{\varphi} - \bm{N}_{,\bm{\sigma}}(\bm{\varphi}, \bm{\sigma}) \cdot \bm{\sigma} - \lambda_{v} ) - B \div ((\nabla \bm{\varphi})^{\top} a_{,\nabla \bm{\varphi}}(\bm{\varphi}, \nabla \bm{\varphi})) \right ) \\
& =   K \left ( \nabla (-p + B a(\bm{\varphi}, \nabla \bm{\varphi}) + A \Psi(\bm{\varphi})) - B \div ((\nabla \bm{\varphi})^{\top} a_{,\nabla \bm{\varphi}}(\bm{\varphi}, \nabla \bm{\varphi})) \right ) .
\end{aligned}
\end{equation}
We can further simplify \eqref{constutitive:velo:1} with the identity:
\begin{align*}
\nabla (a(\bm{\varphi}, \nabla \bm{\varphi})) = (\nabla \bm{\varphi})^{\top} a_{,\bm{\varphi}}(\bm{\varphi}, \nabla \bm{\varphi}) + \div ((\nabla \bm{\varphi})^{\top} a_{,\nabla \bm{\varphi}}(\bm{\varphi}, \nabla \bm{\varphi})) - (\nabla \bm{\varphi})^{\top}  \div (a_{,\nabla \bm{\varphi}}(\bm{\varphi}, \nabla \bm{\varphi})),
\end{align*}
and hence, \eqref{constutitive:velo:1} becomes
\begin{align}\label{constitutive:velo:Darcy}
\vec{v} = -K \nabla p + K (\nabla \bm{\varphi})^{\top}  (\bm{\mu} - \bm{N}_{,\bm{\varphi}}(\bvarphi,\bm\sigma)).
\end{align}
Thus, the model equations are 
\begin{subequations}\label{multiphasetumourmodel}
\begin{align}
\div \vec{v} & = \unit \cdot \bm{U}(\bm{\varphi}, \bm{\sigma}), \label{multi:div} \\
\vec{v} & = -K \nabla p + K (\nabla \bm{\varphi})^{\top}  (\bm{\mu} - \bm{N}_{,\bm{\varphi}}(\bm{\varphi}, \bm{\sigma})), \label{multi:darcy} \\
\pd_{t} \bm{\varphi} + \div (\bm{\varphi} \otimes \vec{v}) & = \div ( \TT{C}(\bm{\varphi}, \bm{\sigma}) \nabla (\Proj{ \bm{\mu}})) + \bm{U}(\bm{\varphi}, \bm{\sigma}), \label{multi:varphi} \\
\bm{\mu} & = B a_{,\bm{\varphi}}(\bm{\varphi}, \nabla \bm{\varphi}) - B \div (a_{,\nabla \bm{\varphi}}(\bm{\varphi}, \nabla \bm{\varphi})) + A \Psi_{,\bm{\varphi}}(\bm{\varphi}) + \bm{N}_{,\bm{\varphi}}(\bm{\varphi}, \bm{\sigma}), \label{multi:mu} \\
\pd_{t} \bm{\sigma} + \div (\bm{\sigma} \otimes \vec{v}) & = \div ( \TT{D}(\bm{\varphi}, \bm{\sigma}) \nabla \bm{N}_{,\bm{\sigma}}(\bm{\varphi}, \bm{\sigma})) + \bm{S}(\bm{\varphi}, \bm{\sigma}), \label{multi:sigma}
\end{align}
\end{subequations}
where
\begin{align*}
\bm{U} = \left ( \overline{\rho}_{1}^{-1} \mathcal{U}_{1}, \dots, \overline{\rho}_{L}^{-1} \mathcal{U}_{L} \right )^{\top} \in \R^{L}, \quad  \unit \cdot \bm{U} = \sum_{i=1}^{L} \frac{\mathcal{U}_{i}}{\overline{\rho}_{i}}, \quad \bm{S}= (S_{1}, \dots, S_{M})^{\top} \in \R^{M}.
\end{align*}
The constitutive choices above lead to the following energy identity.

\begin{thm}\label{thm:Energy}
A sufficiently smooth solution to \eqref{multiphasetumourmodel} fulfills
\begin{align*}
\frac{\dd}{\dt} \mathcal{E}(\bm{\varphi}, \bm{\sigma}) = \, &   \frac{\dd}{\dt} \int_{\Omega} \left ( A \Psi(\bm{\varphi}) + B a(\bm{\varphi}, \nabla \bm{\varphi}) + N(\bm{\varphi}, \bm{\sigma}) \right ) \dx \\
= \, & - \int_{\Omega} \TT{C}(\bm{\varphi}, \bm{\sigma}) \nabla (\Proj{\bm{\mu}}) : \nabla (\Proj{\bm{\mu}}) 
+ \TT{D}(\bm{\varphi}, \bm{\sigma}) \nabla N_{,\bm{\sigma}}(\bm{\varphi}, \bm{\sigma}) : \nabla N_{,\bm{\sigma}}(\bm{\varphi}, \bm{\sigma}) + \frac{\abs{\vec{v}}^{2} }{K} \dx \\
& - \int_{\Omega} \bm{S}(\bm{\varphi}, \bm{\sigma}) \cdot \bm{N}_{,\bm{\sigma}}(\bm{\varphi}, \bm{\sigma}) - \bm{U}(\bm{\varphi}, \bm{\sigma}) \cdot \Proj{\bm{\mu}} \dx \\
& - \int_{\Omega} (\unit \cdot \bm{U}(\bm{\varphi}, \bm{\sigma})) (\bm{\varphi} \cdot \Proj{\bm{\mu}} + \bm{\sigma} \cdot \bm{N}_{,\bm{\sigma}}(\bm{\varphi}, \bm{\sigma}) - N(\bm{\varphi}, \bm{\sigma}) - p)\dx \\
& + \int_{\pd \Omega} \TT{C}(\bm{\varphi}, \bm{\sigma}) \nabla (\Proj{\bm{\mu}}) : (\Proj{\bm{\mu}} \otimes \vec\nu) + B \Proj{a_{,\nabla \bm{\varphi}}(\bm{\varphi}, \nabla \bm{\varphi})} : (\pd_{t} \bm{\varphi} \otimes \vec{\nu})  \dHaus \\
& + \int_{\pd \Omega}  \TT{D}(\bm{\varphi}, \bm{\sigma}) \nabla \bm{N}_{,\bm{\sigma}}(\bm{\varphi}, \bm{\sigma}) : (\bm{N}_{,\bm{\sigma}}(\bm{\varphi}, \bm{\sigma}) \otimes \vec\nu)  + (N(\bm{\varphi}, \bm{\sigma}) + p) \vec{v} \cdot \vec{\nu} \dHaus.
\end{align*}
\end{thm}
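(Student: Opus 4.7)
The plan is to differentiate the energy in time, use the chemical potential defined in \eqref{multi:mu} as a bookkeeping device, and then systematically substitute the balance laws \eqref{multi:div}--\eqref{multi:sigma} together with the Darcy relation \eqref{multi:darcy}. First I would compute $\frac{d}{dt}\mathcal{E}$ by the chain rule and move the time derivative inside, obtaining
\[
\tfrac{d}{dt}\mathcal{E}=\int_\Omega \bigl(A\Psi_{,\bvarphi}+Ba_{,\bvarphi}+\bm{N}_{,\bvarphi}\bigr)\cdot\pd_t\bvarphi + Ba_{,\nabla\bvarphi}:\nabla\pd_t\bvarphi + \bm{N}_{,\bm\sigma}\cdot\pd_t\bm\sigma\dx,
\]
then integrate by parts the gradient term to convert $Ba_{,\nabla\bvarphi}:\nabla\pd_t\bvarphi$ into $-B\div(a_{,\nabla\bvarphi})\cdot\pd_t\bvarphi$ plus a boundary contribution. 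Invoking \eqref{chempotential} collapses the bulk integrand to $\bm{\mu}\cdot\pd_t\bvarphi+\bm{N}_{,\bm\sigma}\cdot\pd_t\bm\sigma$, with the boundary term $B\int_{\pd\Omega}a_{,\nabla\bvarphi}:(\pd_t\bvarphi\otimes\vec\nu)\dHaus$ left over; because $\pd_t\bvarphi\in\TangGibbs$ this term is unchanged if $a_{,\nabla\bvarphi}$ is replaced by its projection $\Proj{a_{,\nabla\bvarphi}}$, which is precisely the boundary form appearing in the claim.

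Next I would substitute the PDEs \eqref{multi:varphi} and \eqref{multi:sigma} for $\pd_t\bvarphi$ and $\pd_t\bm\sigma$. The mobility/flux contributions are handled by integrating by parts:
\begin{align*}
\int_\Omega \bm{\mu}\cdot\div(\TT{C}\nabla\Proj{\bm{\mu}})\dx
&=\int_\Omega \Proj{\bm{\mu}}\cdot\div(\TT{C}\nabla\Proj{\bm{\mu}})\dx\\
&=-\int_\Omega \TT{C}\nabla\Proj{\bm{\mu}}:\nabla\Proj{\bm{\mu}}\dx + \int_{\pd\Omega}\TT{C}\nabla\Proj{\bm{\mu}}:(\Proj{\bm{\mu}}\otimes\vec\nu)\dHaus,
\end{align*}
where the first equality uses $\sum_{i}\TT{C}_{ik}=0$ so that $\div(\TT{C}\nabla\Proj{\bm{\mu}})\in\TangGibbs$. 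The analogous manipulation with $\TT{D}$ and $\bm{N}_{,\bm\sigma}$ produces the chemical diffusion term and its boundary contribution. The source contributions $\bm{U}\cdot\bm{\mu}$ and $\bm{S}\cdot\bm{N}_{,\bm\sigma}$ combine with $-\bm{U}\cdot\Proj{\bm{\mu}}$ (after replacing $\bm{U}\cdot\bm{\mu}=\bm{U}\cdot\Proj{\bm{\mu}}+\tfrac{1}{L}(\unit\cdot\bm{U})(\unit\cdot\bm{\mu})$) to match the source lines in the statement, with the leftover $\tfrac{1}{L}(\unit\cdot\bm{U})(\unit\cdot\bm{\mu})$ being absorbed into the $\unit\cdot\bm{U}$ line via $\bvarphi\cdot\Proj{\bm{\mu}}$.

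The main obstacle, and the step requiring most care, is the convective block. Writing $\div(\bvarphi\otimes\vec v)=(\nabla\bvarphi)\vec v+\bvarphi\,\div\vec v$ and similarly for $\bm\sigma$, and using \eqref{multi:div}, gives
\[
-\int_\Omega \bigl(\bm{\mu}\cdot(\nabla\bvarphi)\vec v + \bm{N}_{,\bm\sigma}\cdot(\nabla\bm\sigma)\vec v\bigr)\dx -\int_\Omega(\unit\cdot\bm{U})\bigl(\bvarphi\cdot\bm{\mu}+\bm\sigma\cdot\bm{N}_{,\bm\sigma}\bigr)\dx.
\]
In the first integral I would use Darcy's law \eqref{multi:darcy} rewritten as $\vec v/K+\nabla p=(\nabla\bvarphi)^\top(\bm{\mu}-\bm{N}_{,\bvarphi})$, dot with $\vec v$, and observe that $\bm{N}_{,\bvarphi}\cdot(\nabla\bvarphi)\vec v+\bm{N}_{,\bm\sigma}\cdot(\nabla\bm\sigma)\vec v=\vec v\cdot\nabla N$; this produces $-\int|\vec v|^2/K\dx$, a pressure work term $-\int \vec v\cdot\nabla p\dx$ (which after integration by parts yields $\int(\unit\cdot\bm{U})p\dx-\int_{\pd\Omega}p\vec v\cdot\vec\nu\dHaus$), and $-\int\vec v\cdot\nabla N\dx$ (which yields $\int(\unit\cdot\bm{U})N\dx-\int_{\pd\Omega}N\vec v\cdot\vec\nu\dHaus$). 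Collecting and replacing $\bvarphi\cdot\bm{\mu}$ by $\bvarphi\cdot\Proj{\bm{\mu}}+\tfrac1L(\unit\cdot\bm{\mu})$, together with the $\tfrac1L(\unit\cdot\bm{U})(\unit\cdot\bm{\mu})$ contribution from the source step, reproduces exactly the $(\unit\cdot\bm{U})$ line of the claim and the boundary line $\int_{\pd\Omega}(N+p)\vec v\cdot\vec\nu\dHaus$, completing the identity.
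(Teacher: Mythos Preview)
Your proposal is correct and follows essentially the same route as the paper's proof. The only organisational difference is that you start from $\tfrac{d}{dt}\mathcal{E}$ and substitute the balance laws \eqref{multi:varphi}, \eqref{multi:sigma}, \eqref{multi:darcy} into it, whereas the paper first tests each of these equations with $\Proj{\bm\mu}$, $\bm{N}_{,\bm\sigma}$ and $\vec v$ respectively, integrates over $\Omega$, and then adds the resulting identities; the projection tricks, the use of \eqref{Compatibility:symmetry=0} to pass from $\bm\mu$ to $\Proj{\bm\mu}$, the chain-rule identity $\bm{N}_{,\bvarphi}\cdot(\nabla\bvarphi)\vec v+\bm{N}_{,\bm\sigma}\cdot(\nabla\bm\sigma)\vec v=\vec v\cdot\nabla N$, and the Darcy manipulation yielding $|\vec v|^2/K$ are identical in both presentations.
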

\begin{proof}
Taking the scalar product of \eqref{multi:varphi} with $\Proj{\bm{\mu}}$ and integrating over $\Omega$ leads to
\begin{equation}
\label{energy:identity:part1}
\begin{aligned}
& \int_{\Omega} \Proj{\bm{\mu}} \cdot \pd_{t} \bm{\varphi} + (\bm{\varphi} \cdot \Proj{\bm{\mu}}) \unit \cdot \bm{U}(\bm{\varphi}, \bm{\sigma}) + \Proj{\bm{\mu}} \cdot (\nabla \bm{\varphi}) \vec{v} \dx \\
& \quad = \int_{\Omega} -\TT{C}(\bm{\varphi}, \bm{\sigma}) \nabla (\Proj{\bm{\mu}}) : \nabla (\Proj{\bm{\mu}}) + \bm{U}(\bm{\varphi}, \bm{\sigma}) \cdot \Proj{\bm{\mu}} \dx \\
& \quad \quad + \int_{\pd \Omega} \TT{C}(\bm{\varphi}, \bm{\sigma}) \nabla (\Proj{\bm{\mu}}) : (\Proj{\bm{\mu}} \otimes \vec{\nu}) \dHaus.
\end{aligned}
\end{equation}
Taking the projection of \eqref{multi:mu} and the scalar product with $\pd_{t} \bm{\varphi}$, and integrating over $\Omega$ leads to
\begin{equation}
\label{energy:identity:part2}
\begin{aligned}
\int_{\Omega} \Proj{\bm{\mu}} \cdot \pd_{t} \bm{\varphi} \dx & = \int_{\Omega} \Proj{\left (Ba_{,\bm{\varphi}}(\bm{\varphi}, \nabla \bm{\varphi}) + A \Psi_{,\bm{\varphi}}(\bm{\varphi}) + \bm{N}_{,\bm{\varphi}}(\bm{\varphi}, \bm{\sigma}) \right )} \cdot \pd_{t} \bm{\varphi} \dx \\
& \quad - \int_{\Omega} B \div (\Proj{a_{,\nabla \bm{\varphi}}(\bm{\varphi}, \nabla \bm{\varphi})}) \cdot \pd_{t} \bm{\varphi} \dx,
\end{aligned}
\end{equation}
where we used the linearity of the projection operator to deduce that $\Proj{\div (a_{,\nabla \bm{\varphi}})} = \div (\Proj{a_{,\nabla \bm{\varphi}}})$.  Integrating by parts on the last term of \eqref{energy:identity:part2} leads to
\begin{equation}\label{energy:identity:part3}
\begin{aligned}
\int_{\Omega} \Proj{\bm{\mu}} \cdot \pd_{t} \bm{\varphi} \dx & = \int_{\Omega} \Proj{\left (Ba_{,\bm{\varphi}}(\bm{\varphi}, \nabla \bm{\varphi}) + A \Psi_{,\bm{\varphi}}(\bm{\varphi}) + \bm{N}_{,\bm{\varphi}}(\bm{\varphi}, \bm{\sigma}) \right )} \cdot \pd_{t} \bm{\varphi} \dx \\
& \quad \quad + \int_{\Omega} B (\Proj{a_{,\nabla \bm{\varphi}}(\bm{\varphi}, \nabla \bm{\varphi})}) : \pd_{t} \nabla \bm{\varphi} \dx \\
& \quad \quad - \int_{\pd \Omega} B (\Proj{a_{,\nabla \bm{\varphi}}(\bm{\varphi}, \nabla \bm{\varphi})}) : (\pd_{t} \bm{\varphi} \otimes \vec{\nu}) \dHaus.
\end{aligned}
\end{equation}
Next, taking the scalar product of \eqref{multi:sigma} with $\bm{N}_{,\bm{\sigma}}$ and integrating over $\Omega$ leads to
\begin{equation}\label{energy:identity:part4}
\begin{aligned}
& \int_{\Omega} \bm{N}_{,\bm{\sigma}}(\bm{\varphi}, \bm{\sigma}) \cdot \pd_{t} \bm{\sigma} + (\bm{\sigma} \cdot \bm{N}_{,\bm{\sigma}}(\bm{\varphi}, \bm{\sigma})) \unit\cdot \bm{U}(\bm{\varphi}, \bm{\sigma}) + \bm{N}_{,\bm{\sigma}}(\bm{\varphi}, \bm{\sigma}) \cdot (\nabla \bm{\sigma}) \vec{v} \dx \\
& \quad  = - \int_{\Omega} \TT{D}(\bm{\varphi}, \bm{\sigma}) \nabla \bm{N}_{,\bm{\sigma}}(\bm{\varphi}, \bm{\sigma}) : \nabla \bm{N}_{,\bm{\sigma}}(\bm{\varphi}, \bm{\sigma}) - \bm{S}(\bm{\varphi}, \bm{\sigma}) \cdot \bm{N}_{,\bm{\sigma}}(\bm{\varphi}, \bm{\sigma}) \dx \\
& \quad \quad + \int_{\pd \Omega} \TT{D}(\bm{\varphi}, \bm{\sigma}) \nabla \bm{N}_{,\bm{\sigma}}(\bm{\varphi}, \bm{\sigma}) : (\bm{N}_{,\bm{\sigma}}(\bm{\varphi}, \bm{\sigma}) \otimes \vec{\nu}) \dHaus,
\end{aligned}
\end{equation}
while taking the scalar product of \eqref{multi:darcy} with $\vec{v}$ and integrating over $\Omega$ gives
\begin{equation}\label{energy:identity:part5}
\begin{aligned} 
\int_{\Omega} \frac{\abs{\vec{v}}^{2}}{K} \dx & = \int_{\Omega} -\nabla p \cdot \vec{v} + (\bm{\mu} - \bm{N}_{,\bm{\varphi}}(\bm{\varphi}, \bm{\sigma})) \cdot (\nabla \bm{\varphi}) \vec{v} \dx \\
& = \int_{\Omega} p \unit\cdot \bm{U}(\bm{\varphi}, \bm{\sigma}) + \Proj{(\bm{\mu} - \bm{N}_{,\bm{\varphi}}(\bm{\varphi}, \bm{\sigma}))} \cdot (\nabla \bm{\varphi}) \vec{v} \dx - \int_{\pd \Omega} p \vec{v} \cdot \vec{\nu} \dHaus,
\end{aligned}
\end{equation}
where we used the projection operator to deduce that $(\bm{\mu} - \bm{N}_{,\bm{\varphi}}) \cdot (\nabla \bm{\varphi}) \vec{v} = \Proj{(\bm{\mu} - \bm{N}_{,\bm{\varphi}})} \cdot (\nabla \bm{\varphi}) \vec{v}$.
Note that
\begin{align*}
& \int_{\Omega} \Proj{\bm{N}_{,\bm{\varphi}}(\bm{\varphi}, \bm{\sigma}) \cdot (\nabla \bm{\varphi}) \vec{v} + \bm{N}_{,\bm{\sigma}}(\bm{\varphi}, \bm{\sigma}) \cdot (\nabla \bm{\sigma}) \vec{v} \dx = \int_{\Omega} \nabla (N(\bm{\varphi}, \bm{\sigma})) \cdot \vec{v}} \dx \\
& \quad = \int_{\Omega} - N(\bm{\varphi}, \bm{\sigma}) \unit\cdot \bm{U}(\bm{\varphi}, \bm{\sigma}) \dx + \int_{\pd \Omega} N(\bm{\varphi}, \bm{\sigma}) \vec{v} \cdot \vec{\nu} \dHaus.
\end{align*}
Furthermore, by the definition of the projection operator and the fact that $\pd_{t} \bm{\varphi} \in \TangGibbs$, $\pd_{t} \nabla \bm{\varphi} \in (\TangGibbs)^{d}$, it holds that
\begin{align*}
\frac{\dd}{\dt} \mathcal{E}(\bm{\varphi}, \bm{\sigma}) & = \int_{\Omega} \Proj{(B a_{,\bm{\varphi}}(\bm{\varphi}, \nabla \bm{\varphi}) + A\Psi_{,\bm{\varphi}}(\bm{\varphi}) + N_{,\bm{\varphi}}(\bm{\varphi}, \bm{\sigma}))} \cdot \pd_{t} \bm{\varphi} \dx \\
& \quad + \int_{\Omega} B \Proj{a_{,\nabla \bm{\varphi}}(\bm{\varphi}, \nabla \bm{\varphi})} : \pd_{t} \nabla \bm{\varphi} + \bm{N}_{,\bm{\sigma}}(\bm{\varphi}, \bm{\sigma}) \cdot \pd_{t} \bm{\sigma} \dx.
\end{align*}
Thus, adding \eqref{energy:identity:part1}, \eqref{energy:identity:part3}, \eqref{energy:identity:part4} and \eqref{energy:identity:part5} gives the energy identity.
\end{proof}

\begin{remark}
It follows from Theorem \ref{thm:Energy} that, under the boundary conditions
\begin{align*}
\vec{v} \cdot \vec{\nu} = 0, \; \left ( \TT{C}(\bm{\varphi}, \bm{\sigma}) \nabla (\Proj{\bm\mu}) \right ) \vec{\nu} = \bm{0}, \; \left ( \TT{D}(\bm{\varphi}, \bm{\sigma}) \nabla \bm{N}_{,\bm{\sigma}}(\bm{\varphi}, \bm{\sigma}) \right ) \vec{\nu} = \bm{0}, \; \left (\Proj{a_{,\nabla \bm{\varphi}}(\bm{\varphi}, \nabla \bm{\varphi})}\right ) \vec{\nu} = \bm{0} %\quad \text{ on } \pd \Omega,
\end{align*}
on $\pd \Omega$, and in the absence of source terms $\bm{S}(\bm{\varphi}, \bm{\sigma}) = \bm{0}$ and $\bm{U}(\bm{\varphi}, \bm{\sigma}) = \bm{0}$, the total free energy $\mathcal{E}(\bm{\varphi}, \bm{\sigma})$ is non-increasing in time.
\end{remark}

\subsection{Specific models}
\subsubsection{Zero velocity and zero excess of total mass}\label{sec:ZeroVelo:ZeroExcess}
Assuming zero excess of total mass, i.e., $\unit \cdot \bm{U} = \sum_{i=1}^{L} \overline{\rho}_{i}^{-1} \mathcal{U}_{i} = 0$, we obtain from \eqref{multi:div} that $\div \vec{v} = 0$.  Then, sending $K \to 0$ in \eqref{multi:darcy} formally implies that $\vec{v} \to \vec{0}$, see also \cite[\S 6]{GarckeLamDarcy} for a rigorous treatment in the two-component case.  Then \eqref{multiphasetumourmodel}, with source terms satisfying $\unit\cdot \bm{U} = 0$, can be reduced to
\begin{subequations}\label{model:zeroexcesstotalamass}
\begin{align}
\pd_{t} \bm{\varphi} & = \div ( \TT{C}(\bm{\varphi}, \bm{\sigma}) \nabla (\Proj{ \bm{\mu}})) + \bm{U}(\bm{\varphi}, \bm{\sigma}), \\
\bm{\mu} & = B a_{,\bm{\varphi}}(\bvarphi,\nabla\bvarphi) - B \div (a_{,\nabla \bm{\varphi}}(\bvarphi,\nabla\bvarphi)) + A \Psi_{,\bm{\varphi}}(\bvarphi) + \bm{N}_{,\bm{\varphi}}(\bm{\varphi}, \bm{\sigma}),  \\
\pd_{t} \bm{\sigma} & = \div ( \TT{D}(\bm{\varphi}, \bm{\sigma}) \nabla \bm{N}_{,\bm{\sigma}}(\bm{\varphi}, \bm{\sigma})) + \bm{S}(\bm{\varphi}, \bm{\sigma}),
\end{align}
\end{subequations}
which can be seen as the multiphase analogue of the model considered in \cite[\S 2.4.3]{GLSS}.  Note that due to the condition $\unit\cdot \bm{U} = 0$ and \eqref{Compatibility:symmetry=0} (for second order tensors) or \eqref{Compatibility:fourthorder} (for fourth order tensors), we necessarily have that $\bm{\varphi}(t) \in  \Gibbs$ for all $t > 0$ if the initial condition $\bm{\varphi}_{0}$ for $\bm{\varphi}$ belongs to $\Gibbs$.

\subsubsection{Choices for the Ginzburg--Landau energy}
Typical choices for the gradient part of the free energy are the following
\begin{align*}
a( \bm{\eta}, \nabla \bm{\varphi}) = \sum_{i=1}^{L} \frac{1}{2} \abs{\nabla \varphi_{i}}^{2}, \text{ or } a(\bm{\eta}, \nabla \bm{\varphi}) = \sum_{1 \leq i < j \leq L} \frac{1}{2} \beta_{ij}^{2} \abs{\eta_{i} \nabla \varphi_{j} - \eta_{j} \nabla \varphi_{i}}^{2},
\end{align*}
where the constants $\beta_{ij}$, $1 \leq i < j \leq L$ are referred to as the gradient energy coefficient of phases $i$ and $j$ (see \cite{GHaas,GarckeNestlerStothSIAM}).  For the potential part, we may consider the following
\begin{align*}
\Psi(\bm{\varphi}) = k_{\mathrm{B}} \theta  \sum_{i=1}^{L} \varphi_{i} \ln \varphi_{i} - \frac{1}{2} \bm{\varphi} \cdot \TT{\mathcal{W}} \bm{\varphi},
\end{align*}
where $k_{\mathrm{B}}$ denotes the Boltzmann constant, $\theta$ is the absolute temperature, and $\TT{\mathcal{W}} = (w_{ij})_{1 \leq i,j \leq L}$ is a symmetric $L \times L$ matrix with zeros on the diagonal and positive definite on $\mathrm{TG}$.  For example, the choice 
$\TT{\mathcal{W}} =  \id - \unit \otimes \unit$, where $\id$ is the identity matrix, is used in \cite{Blanketal14,GarckeNestlerStothSIAM,Nurnberg09}.  One can check that $\bm{\zeta} \cdot \left (\id - \unit \otimes \unit \right ) \bm{\zeta} = \abs{\bm{\zeta}}^{2}$ for any $\bm{\zeta} \in \mathrm{TG}$.  We can also consider obstacle potentials that penalise the order parameter $\bm{\varphi}$ from straying out of the set $\mathrm{G}$:
\begin{align}\label{Obstacle:Pot}
\Psi(\bm{\varphi}) = I_{\mathrm{G}}(\bm{\varphi}) - \frac{1}{2} \bm{\varphi} \cdot \TT{\mathcal{W}} \bm{\varphi} , \quad I_{\mathrm{G}}(\bm{y}) = \begin{cases} 
0 & \text{ for } \bm{y} \in \mathrm{G}, \\
\infty & \text{ otherwise}.
\end{cases}
\end{align}
Let us also mention potentials of polynomial type, which generalise the quartic double-well potential $(1-y^{2})^{2}$ commonly used in two-phase diffuse interface models.  One example is
\begin{align*}
\Psi(\bm{\varphi}) = \sum_{1 \leq i < j \leq L} \alpha_{ij} \varphi_{i}^{2} \varphi_{j}^{2},
\end{align*}
where $\alpha_{ij}$ are positive constants \cite{GarckeNestlerStoth98}.

\subsection{Degenerate Ginzburg--Landau energy}\label{sec:DegGL}
As described in Section~\ref{sec:22}, 
we may consider a Ginzburg--Landau-type energy of the form
\begin{align*}
\mathcal{E}(\bm{\varphi}, \bm\sigma) = \int_\Omega \frac{B}{2}\abs{\sum_{i=2}^{k}\nabla \varphi_{i}}^{2} + AW \left (\sum_{i=2}^{k}\varphi_{i} \right ) 
+ N(\bvarphi,\bm\sigma) \dx,
\end{align*}
for some $2 \leq k \leq L$, i.e., $\mathcal{E}(\bm{\varphi}, \bm\sigma)$ can be independent of $\varphi_{1}$ and $\varphi_{j}$ for any $j > k$, and $W$ is a scalar potential with equal minima at $0$ and $1$.  In the simplest setting $L = 2$ and if the chemical free energy density $N$ is independent of $\bm{\varphi}$, we obtain from \eqref{multi:mu} that
\begin{align*}
\mu_{1} = 0, \quad \mu_{2} =  - B \Laplace \varphi_{2} + A W'(\varphi_{2}).
\end{align*}
Together with a mobility tensor $\TT{C}(\bm{\varphi}, \bm{\sigma}) \in \R^{2 \times 2}$ 
such that 
$\TT{C}_{22}(\bm{\varphi}, \bm{\sigma}) = - \TT{C}_{21}(\bm{\varphi}, \bm{\sigma}) = m(\varphi_{2})$ 
for some mobility function $m$, we obtain from \eqref{multi:varphi} 
\begin{align*}
\pd_{t}\varphi_{1} + \div (\varphi_{1} \vec{v}) = -\div (m(\varphi_{2}) \nabla \mu_{2}) + \overline{\rho}_{1}^{-1} \mathcal{U}_{1}, \quad \pd_{t}\varphi_{2} + \div (\varphi_{2} \vec{v}) = \div (m(\varphi_{2}) \nabla \mu_{2}) + \overline{\rho}_{2}^{-1} \mathcal{U}_{2}.
\end{align*}
Thus, we obtain a Cahn--Hilliard type equation for $\varphi_{2}$, while for $\varphi_{1}$ we have a transport equation with source terms $\overline{\rho}_{1}^{-1} \mathcal{U}_{1}$ and $\div (m(\varphi_{2}) \nabla \mu_{2})$.  This is similar to the situations encountered in \cite{ChenLowen,CWSL,Dai,FJCWLC,WLFC,YLLL}.

\subsection{Mobility tensor}\label{sec:mob:tensor}
We consider second order mobility tensors $\TT{C}(\bm{\varphi}, \bm{\sigma})$ which fulfill \eqref{Compatibility:symmetry=0}.  For future analysis and numerical implementations, it is advantageous to consider a mobility that is symmetric and positive semi-definite on $\mathrm{TG}$, see for instance \cite{BarrettBG01,ElliottGarckeMulti}.  In most cases $\TT{C}(\bm{\varphi}, \bm{\sigma})$ is expected to mainly depend on $\bm{\varphi}$ and our standard choice will be independent of $\bm{\sigma}$ and of the form
\begin{align}\label{MobilityChoice:Symmetric}
\TT{C}_{ij}(\bm{\varphi}) = m_{i}(\varphi_{i}) \left ( \delta_{ij} - m_{j}(\varphi_{j}) \left ( \sum_{k=1}^{L} m_{k}(\varphi_{k}) \right )^{-1} \right ) \text{ for } 1 \leq i, j \leq L,
\end{align}
where $m_{i}(\varphi_{i}) \geq 0$, $1 \leq i \leq L$, are the so-called bare mobilities.  Here, we assume that the vector $(m_{1}(\varphi_{1}), \dots, m_{L}(\varphi_{L}))^{\top}$ is not identically zero on the Gibbs simplex, so that the reciprocal of the sum $\sum_{k=1}^{L} m_{k}(\varphi_{k})$ is well-defined.  Summing over $1 \leq i \leq L$ in \eqref{MobilityChoice:Symmetric} shows that \eqref{Compatibility:symmetry=0} is satisfied.  Furthermore, for any $\bm{\zeta} \in \R^{L}$, we have (for notational convenience we write $m_{i}$ for $m_{i}(\varphi_{i})$)
\begin{align*}
\bm{\zeta} \cdot \TT{C}(\bvarphi) \bm{\zeta} & = \frac{\left ( \sum_{i=1}^{L} m_{i} \abs{\zeta_{i}}^{2} \sum_{j=1}^{L}m_{j} \right ) - \left ( \sum_{i=1}^{L} m_{i} \zeta_{i} \right )^{2}}{\sum_{j = 1}^{L} m_{j}} \\
& = \frac{1}{\sum_{j=1}^{L} m_{j}} \left ( \sum_{1 \leq i < j \leq L} m_{i} m_{j} \left ( \abs{\zeta_{i}}^{2} + \abs{\zeta_{j}}^{2} - 2 \zeta_{i} \zeta_{j} \right ) \right ) = \frac{\sum_{1 \leq i < j \leq L} m_{i} m_{j} (\zeta_{i} - \zeta_{j})^{2}}{\sum_{j=1}^{L} m_{j}} \geq 0,
\end{align*}
where we have used the relations
\begin{align*}
\left ( \sum_{i=1}^{L} m_{i} \abs{\zeta_{i}}^{2} \right ) \left ( \sum_{j=1}^{L} m_{j} \right ) & = \sum_{i=1}^{L} m_{i}^{2} \abs{\zeta_{i}}^{2} + \sum_{1 \leq i \neq j \leq L} m_{i} m_{j} \abs{\zeta_{i}}^{2} \\
& = \sum_{i=1}^{L} m_{i}^{2} \abs{\zeta_{i}}^{2} + \sum_{1 \leq i < j \leq L} m_{i} m_{j} \left ( \abs{\zeta_{i}}^{2} + \abs{\zeta_{j}}^{2} \right ), \\
\left ( \sum_{i=1}^{L} m_{i} \zeta_{i} \right)^{2} & = \sum_{i=1}^{L} m_{i}^{2} \abs{\zeta_{i}}^{2} + 2 \sum_{1 \leq i < j \leq L} m_{i} m_{j} \zeta_{i} \zeta_{j}.
\end{align*}
In particular, for any $\bm{\varphi} \in \Gibbs$, $\TT{C}(\bm{\varphi})$ is positive semi-definite.

\subsection{Reduction to a two-component tumour model} \label{sec:27}
We assume that the domain $\Omega$ consists of proliferating tumour tissue and host tissue in the presence of a chemical species acting as a nutrient for the tumour.  Let $L = 2$ and $M = 1$, and set\begin{equation} \label{eq:sec27}
\begin{aligned}
\tilde{\varphi} & = \varphi_{2} - \varphi_{1}, \; \tilde{\Psi}(\tilde{\varphi}) =  \Psi \left ( \tfrac{1}{2}(1 - \tilde{\varphi}), \tfrac{1}{2}(1 + \tilde{\varphi}) \right ), \\ 
\tilde{\mu} &= \tfrac{1}{2}(\mu_{2} - \mu_{1}), \; a(\bm{\eta}, \nabla \bm{\varphi}) = \abs{\nabla \varphi_{1}}^{2} + \abs{\nabla \varphi_{2}}^{2}, \\
\tilde{N}(\tilde{\varphi}, \sigma) & = N \left ( \tfrac{1}{2}(1-\tilde{\varphi}), \tfrac{1}{2}(1 + \tilde{\varphi}) , \sigma \right ), \\
 \tilde{S}(\tilde{\varphi}, \sigma) & = S \left ( \tfrac{1}{2}(1-\tilde{\varphi}), \tfrac{1}{2}(1 + \tilde{\varphi}) , \sigma \right ), \\
\tilde{\mathcal{U}}_{i} (\tilde{\varphi}, \sigma) & = \mathcal{U}_{i} \left ( \tfrac{1}{2}(1-\tilde{\varphi}), \tfrac{1}{2}(1 + \tilde{\varphi}) , \sigma \right ) \text{ for } i = 1,2,
\end{aligned}
\end{equation}
together with a scalar mobility 
\begin{equation} \label{eq:Dn}
{D}((\tfrac{1}{2}(1-\tilde{\varphi}), \tfrac{1}{2}(1+\tilde{\varphi})), \sigma) = n(\tilde{\varphi}),
\end{equation}
which we here assume to be independent of $\sigma$, for the nutrient equation and a second order mobility tensor $\TT{C}(\bm{\varphi})$ of the form \eqref{MobilityChoice:Symmetric} with bare mobilities $m_{1}(\varphi_{1})$ and $m_{2}(\varphi_{2})$.  With the help of \eqref{MobilityChoice:Symmetric} the entries of $\TT{C}(\bm{\varphi})$ can be computed as
\begin{align*}
\TT{C}_{11}(\bm{\varphi}) = \TT{C}_{22}(\bm{\varphi}) = - \TT{C}_{12}(\bm{\varphi}) = -\TT{C}_{21}(\bm{\varphi}) = \frac{m_{1}(\varphi_{1}) m_{2}(\varphi_{2})}{m_{1}(\varphi_{1}) + m_{2}(\varphi_{2})}.
\end{align*} 
Then, upon defining a non-negative scalar mobility $m$ that is a function of $\tilde{\varphi}$ as
\begin{align}\label{twophasemodel:mobility}
m(\tilde{\varphi}) = \frac{4 m_{1}(\tfrac{1-\tilde{\varphi}}{2}) m_{2}(\tfrac{1+\tilde{\varphi}}{2})}{m_{1}(\tfrac{1-\tilde{\varphi}}{2}) + m_{2}(\tfrac{1+\tilde{\varphi}}{2})} \, \Rightarrow \, 
\TT{C}(\bm{\varphi}) = \frac{1}{4} \left ( \begin{array}{cc}
m(\tilde{\varphi}) & -m(\tilde{\varphi}) \\
-m(\tilde{\varphi}) & m(\tilde{\varphi}) \end{array} \right ) \in \R^{2 \times 2},
\end{align}
it can be shown that \eqref{multiphasetumourmodel} becomes
\begin{subequations}\label{twophasemodel}
\begin{align}
\div \vec{v} & = \overline{\rho}_{1}^{-1} \tilde{\mathcal{U}}_{1}(\tilde{\varphi}, \sigma) + \overline{\rho}_{2}^{-1} \tilde{\mathcal{U}}_{2}(\tilde{\varphi}, \sigma), \\
\vec{v} & = -K \nabla p + K (\tilde{\mu} - \tilde{N}_{,\tilde{\varphi}}(\tilde{\varphi}, \sigma)) \nabla \tilde{\varphi}, \\
\pd_{t} \tilde{\varphi} + \div (\tilde{\varphi} \vec{v}) & = \div (m(\tilde{\varphi}) \nabla \tilde{\mu}) + \overline{\rho}_{2}^{-1} \tilde{\mathcal{U}}_{2}(\tilde{\varphi}, \sigma) - \overline{\rho}_{1}^{-1} \tilde{\mathcal{U}}_{1}(\tilde{\varphi}, \sigma), \\
\tilde{\mu} & = A \tilde{\Psi}'(\tilde{\varphi}) - B \Laplace \tilde{\varphi} + \tilde{N}_{,\tilde{\varphi}}(\tilde{\varphi}, \sigma) , \\
\pd_{t} \sigma + \div (\sigma \vec{v}) & = \div (n(\tilde{\varphi}) \nabla \tilde{N}_{,\sigma}) + \tilde{S}(\tilde{\varphi}, \sigma) ,
\end{align}
\end{subequations}
which coincides with \cite[Equation (2.25)]{GLSS}.  We refer the reader to \cite{GLSS} for a detailed comparison between \eqref{twophasemodel} with other two-component phase field models of tumour growth in the literature.

\subsection{Tumour with quiescent and necrotic cells}\label{sec:specificmodel:necroticcore}
In this section, we give some examples of source terms for the case where a tumour exhibits a quiescent region and a necrotic region.  Let $L = 4$ and denote the volume fractions of the host tissue, proliferating tumour cells, quiescent tumour cells and necrotic tumour cells by $\varphi_{H}$, $\varphi_{P}$, $\varphi_{Q}$, and $\varphi_{N}$, respectively, i.e., $\bm{\varphi} = (\varphi_{H}, \varphi_{P}, \varphi_{Q}, \varphi_{N})^\top$.

We assume matched densities, i.e., $\overline{\rho}_{H} = \overline{\rho}_{P} = \overline{\rho}_{Q} = \overline{\rho}_{N} = 1$, and that there are two chemical species present in the domain, i.e., $M = 2$. The first is a nutrient whose concentration is denoted as $\sigma_{\nut}$, and is only consumed by the proliferating and quiescent tumour cells, and the second is a toxic intracellular agent, whose concentration is denoted as $\sigma_{\tox}$. Hence $\bm\sigma = (\sigma_{nu}, \sigma_{tx})^\top$.  During necrosis, the cell membrane loses its integrity and toxic agents from the former intracellular compartment flow outwards.  We assume that these toxic agents act as growth inhibitors on the surrounding living cells and degrade at a constant rate.  Furthermore, we denote by $\sigma_{\ptq}^{*}$, $\sigma_{\qtn}^{*}$, $\sigma_{\tox}^{*} > 0$ the critical concentrations such that
\begin{itemize}
\item if $\sigma_{\qtn}^{*} < \sigma_{\nut} < \sigma_{\ptq}^{*}$, then the proliferating tumour cells will turn quiescent,
\item if $\sigma_{\nut} < \sigma_{\qtn}^{*}$, then the quiescent tumour cells will undergo necrosis,
\item if $\sigma_{\tox} \geq \sigma_{\tox}^{*}$, then the toxic agents start to inhibit the growth of the living cells.
\end{itemize}
For the source/sink terms $S_{j}(\bvarphi,\bm\sigma)$, $j \in \{ \nut, \tox \}$, we consider
\begin{subequations}
\begin{align}
S_{\nut}(\bvarphi,\bm\sigma) & = \underbrace{- \sigma_{\nut} \left ( \varphi_{P} \mathcal{C}_{P}  +  \varphi_{Q}\mathcal{C}_{Q}  \right )}_{\text{consumption by living tumour cells}} \label{sourceterm:nutrient1}, \\
S_{\tox}(\bvarphi,\bm\sigma) & = \underbrace{ \varphi_{N} \mathcal{R}_{\tox} }_{\text{release by necrotic cells}} - \underbrace{\mathcal{D}_{\tox} \sigma_{\tox}}_{\text{degradation}} \label{soruceterm:toxic1},
\end{align}
\end{subequations}
with constant consumption rates $\mathcal{C}_{P}$, $\mathcal{C}_{Q} \geq 0$ by the proliferating and quiescent cells, respectively, constant release rate $\mathcal{R}_{\tox} \geq 0$ of toxic agents by the necrotic cells, and constant degradation rate $\mathcal{D}_{\tox} \geq 0$ of the toxic agents.  We consider the following free energy density $N(\bm{\varphi}, \bm{\sigma})$:
\begin{align}\label{samplefreeenergyN1}
N(\bm{\varphi}, \bm{\sigma}) = \frac{D_{\nut}}{2} \abs{\sigma_{\nut}}^{2} + \frac{D_{\tox}}{2} \abs{\sigma_{\tox}}^{2} - \chi_{\nut} \sigma_{\nut} \varphi_{P},
\end{align}
where $D_{\nut}, D_{\tox} > 0$ denote parameters related to the diffusivity of the nutrient and of the toxic agent, respectively, and $\chi_{\nut} \geq 0$ can be viewed as a parameter for transport mechanisms such as chemotaxis and active transport.  Neglecting the toxic agent, the above form for the free energy density $N$ is similar to the one chosen in \cite{GLSS,HawkinsZeeOden12}.  In particular, the first two terms of $N$ lead to diffusion of the nutrient and toxic agent, respectively, while the third term of $N$ will give rise to transport mechanisms that drive the proliferating tumour cells to the regions of high nutrient, and also drive the nutrient to the proliferating tumour cells, see \cite{GLSS} for more details regarding the effects of the third term.

Then, computing $\bm{N}_{,\bm{\sigma}}(\bvarphi,\bm\sigma)$ and considering $\TT{D}(\bm{\varphi}, \bm{\sigma})$ to be the second order identity tensor $\id \in \R^{2 \times 2}$, \eqref{multi:sigma} becomes
\begin{subequations}\label{Nutrient:Toxic:equ}
\begin{align}
\pd_{t}\sigma_{\nut} + \div (\sigma_{\nut} \vec{v}) & = \div \left (D_{\nut} \nabla \sigma_{\nut} - \chi_{\nut} \nabla \varphi_{P} \right ) - \sigma_{\nut} \left ( \varphi_{P} \mathcal{C}_{P}  + \varphi_{Q} \mathcal{C}_{Q}  \right ), \\
\pd_{t}\sigma_{\tox} + \div (\sigma_{\tox} \vec{v}) & = \div \left ( D_{\tox} \nabla \sigma_{\tox} \right ) + \varphi_{N} \mathcal{R}_{\tox}  - \mathcal{D}_{\tox} \sigma_{\tox}.
\end{align} 
\end{subequations}
For the source terms $\mathcal{U}_{H}, \mathcal{U}_{P}, \mathcal{U}_{Q}, \mathcal{U}_{N}$, we assume that
\begin{itemize}
\item the host cells experience apoptosis at a constant rate $\mathcal{A}_{H} \geq 0$ and are inhibited by the toxic agent at a constant rate $\mathcal{A}_{\tox} \geq 0$, leading to
\begin{align*}
\mathcal{U}_{H}(\bvarphi,\bm\sigma) & = \underbrace{- \varphi_{H} \mathcal{A}_{\tox} (\sigma_{\tox} - \sigma_{\tox}^{*})^{+}}_{\text{inhibition by toxic agents}}  - \underbrace{\varphi_{H} \mathcal{A}_{\mathrm{H}}}_{\substack{\text{apoptosis of} \\ \text{host tissue}}},
\end{align*}
where $(f)^{+} = \max (0,f)$ denotes the positive part of $f$.
\item The proliferating tumour cells grow due to nutrient consumption at a constant rate $\mathcal{P} \geq 0$, experience apoptosis at a constant rate $\mathcal{A}_{P} \geq 0$, and are inhibited by the toxic agents at the rate $\mathcal{A}_{\tox}$.  Furthermore, when $\sigma_{\nut}$ falls below $\sigma_{\ptq}^{*}$, there is a transition to the quiescent cells at a constant rate $\mathcal{T}_{\ptq}\geq 0$, but when the nutrient concentration is above $\sigma_{\ptq}^{*}$, there is a transition from the quiescent cells at a constant rate $\mathcal{T}_{\qtp} \geq 0$.  Altogether this yields
\begin{equation*}
\begin{aligned}
\mathcal{U}_{P}(\bvarphi,\bm\sigma) & = \underbrace{\varphi_{P} \mathcal{P} \sigma_{\nut}}_{\substack{\text{growth due to} \\ \text{nutrient consumption}}} - \underbrace{\varphi_{P} \mathcal{A}_{\tox} (\sigma_{\tox} - \sigma_{\tox}^{*})^{+}}_{\substack{\text{inhibition by} \\ \text{toxic agents}}} - \underbrace{ \varphi_{P} \mathcal{A}_{P}}_{\substack{\text{apoptosis of} \\ \text{proliferating cells}}}  \\
\notag & \quad + \underbrace{ \varphi_{Q} \mathcal{T}_{\qtp} (\sigma_{\nut} - \sigma_{\ptq}^{*})^{+}}_{\substack{\text{transition from quiescent} \\ \text{ to proliferating cells}}} - \underbrace{ \varphi_{P} \mathcal{T}_{\ptq} ( \sigma_{\ptq}^{*} - \sigma_{\nut})^{+}}_{\substack{\text{transition from proliferating} \\ \text{ to quiescent cells}}}. 
\end{aligned} 
\end{equation*}
\item The quiescent cells experience apoptosis at a constant rate $\mathcal{A}_{Q} \geq 0$, and are inhibited by the toxic agent at the rate $\mathcal{A}_{\tox}$.  Furthermore, aside from the exchange between the proliferating cells and the quiescent cells when the nutrient concentration falls below or is above the critical concentration $\sigma_{\ptq}^{*}$, there is also a transition to the necrotic cells when $\sigma_{\nut}$ falls below $\sigma_{\qtn}^{*}$.  This occurs at a constant rate $\mathcal{T}_{\qtn} \geq 0$, and we obtain
\begin{equation*}
\begin{aligned}
\mathcal{U}_{Q}(\bvarphi,\bm\sigma) & =  - \underbrace{\varphi_{Q} \mathcal{A}_{\tox} (\sigma_{\tox} - \sigma_{\tox}^{*})^{+}}_{\substack{\text{inhibition by} \\ \text{toxic agents}}} - \underbrace{ \varphi_{Q} \mathcal{A}_{Q}}_{\substack{\text{apoptosis of} \\ \text{quiescent cells}}} - \underbrace{\varphi_{Q} \mathcal{T}_{\qtn} (\sigma_{\qtn}^{*} - \sigma_{\nut} )^{+}}_{\substack{\text{transition from quiescent} \\ \text{ to necrotic cells}}} \\
\notag & \quad - \underbrace{\varphi_{Q} \mathcal{T}_{\qtp} (\sigma_{\nut} - \sigma_{\ptq}^{*})^{+}}_{\substack{\text{transition from quiescent} \\ \text{ to proliferating cells}}} + \underbrace{ \varphi_{P} \mathcal{T}_{\ptq} ( \sigma_{\ptq}^{*} - \sigma_{\nut})^{+}}_{\substack{\text{transition from proliferating} \\ \text{ to quiescent cells}}}.
\end{aligned}
\end{equation*}
\item The necrotic cells degrades at a constant rate $\mathcal{D}_{N} \geq 0$ and there is a transition from the quiescent cells at the rate $\mathcal{T}_{\qtn}$ when $\sigma_{\nut}$ falls below $\sigma_{\qtn}^{*}$.  Furthermore, the apoptosis of the proliferating and quiescent cells is a source term for the necrotic cells.  This yields
\begin{align*}
\mathcal{U}_{N}(\bvarphi,\bm\sigma) & = \underbrace{\varphi_{P} \mathcal{A}_{P} + \varphi_{Q} \mathcal{A}_{Q}}_{\substack{\text{apoptosis of proliferating} \\ \text{and quiescent cells}}} +  \underbrace{\varphi_{Q} \mathcal{T}_{\qtn} (\sigma_{\qtn}^{*} - \sigma_{\nut} )^{+}}_{\substack{\text{transition from quiescent} \\ \text{ to necrotic cells}}} - \underbrace{ \mathcal{D}_{N} \varphi_{N}}_{\text{degradation}}.
\end{align*}
\end{itemize}
In practice, on the time scale considered, $\mathcal{A}_{H}$ is small and will often be neglected.  A unique feature of the necrotic core is reflected in the second term of $\mathcal{U}_{N}$, which describes a spontaneous degradation of the necrotic core.  Physiologically, one would expect that the remains of the necrotic cells are slowly processed by specialised cells, leaving only extracellular liquid behind.  Since we do not account for a pure liquid phase in our systems, we obtain a local mass defect due to the disintegration of the necrotic core.  For the source terms discussed above, the equation \eqref{proto:div} for equal densities $\overline{\rho}_{H} = \overline{\rho}_{P} = \overline{\rho}_{Q} = \overline{\rho}_{N} = 1$ then becomes
\begin{align*}
\div \vec{v} & = \varphi_{P}\mathcal{P} \sigma_{\nut}  - \varphi_{H} \mathcal{A}_{H} - \mathcal{A}_{\tox} (1-\varphi_{N}) (\sigma_{\tox} - \sigma_{\tox}^{*})^{+} - \mathcal{D}_{N} \varphi_{N},
\end{align*}
and the disintegration of the necrotic core leads to a sink term for the divergence of the volume-averaged velocity field.  Hence one could argue that there are two effects resulting from the existence of a necrotic core which could possibly limit the uncontrolled growth of the tumour colony.  On the one hand we have the obvious growth inhibition due to the toxic agents, whereas on the other hand the degradation of the necrotic core draws the growing periphery of the tumour back towards the tumour centre.

\subsection{Blood vessels and angiogenic factors}\label{sec:angio}
We can introduce angiogenic factors into the system by considering two additional chemical species: blood vessels whose density is denoted as $b$ and an angiogenic factor whose concentration is denoted as $a$. 
Hence $\bm\sigma = (\sigma_{nu},\sigma_{tx},a,b)^\top$.  We assume that
\begin{itemize}
\item the blood vessels offer a supply of nutrient $\sigma_{\mathrm{Sup}} \geq 0$ at a constant rate $\mathcal{B}_{\nut} \geq 0$, which leads to the modification
\begin{align*}
S_{\nut}(\bvarphi,\bm\sigma) & = \underbrace{\mathcal{B}_{\nut} b \left ( \sigma_{\mathrm{Sup}} - \sigma_{\nut}\right )}_{\text{nutrient supply from blood vessels}}- \sigma_{\nut} \left ( \varphi_{P} \mathcal{C}_{P}  + \varphi_{Q} \mathcal{C}_{Q}  \right ).
\end{align*}
The new term $\mathcal{B}_{\nut} b \left ( \sigma_{\mathrm{Sup}} - \sigma_{\nut}\right )$ in $S_{\nut}$ models the situation where if the nutrient concentration is below $\sigma_{\mathrm{Sup}}$, then additional nutrient is supplied by the blood vessels at a rate $\mathcal{B}_{\nut}$.  However, if $\sigma_{\nut} \geq \sigma_{\mathrm{Sup}}$, then the nutrient diffuses into the blood vessels and is transported away from the cells.
\item The blood vessels are capable of removing the toxic agents released by the necrotic cells at a constant rate $\mathcal{B}_{\tox} \geq 0$, which leads to the modification
\begin{align*}
S_{\tox}(\bvarphi,\bm\sigma) & = \varphi_{N} \mathcal{R}_{\tox}  - \mathcal{D}_{\tox} \sigma_{\tox} - \underbrace{ \mathcal{B}_{\tox}  \sigma_{\tox} b}_{\text{removal by blood vessels}} .
\end{align*}
\item The angiogenic factor is a chemical species that is released by the quiescent tumour cells at a constant rate $\mathcal{R}_{\mathrm{ang}} \geq 0$ due to the lack of nutrient in their surroundings, and it degrades at a constant rate $\mathcal{D}_{\mathrm{ang}} \geq 0$.  This leads to
\begin{align*}
S_{a}(\bvarphi,\bm\sigma) & = \underbrace{\varphi_{Q} \mathcal{R}_{\mathrm{ang}} }_{\text{release by queiscent cells}}  - \underbrace{\mathcal{D}_{\mathrm{ang}} a}_{\text{degradation}}.
\end{align*}
In our model, tumour cells become quiescent as a consequence of a lack of nutrient. Therefore it makes sense to assume that the cells, which are in most need of a reliable vascularisation, are secreting factors which induce the necessary blood vessel growth. This assumption has already been suggested in \cite{Byrne,Cumsille}.  A very important example for tumour nutrient is oxygen. It is well known that a lack of this nutrient, hypoxia, is an important stimulus for angiogenesis \cite{Rey}.

\item Meanwhile, the angiogenic factor induces angiogenesis and consequently the vessel density around the badly supplied tumour cells increases at a constant rate $\mathcal{G}_{\mathrm{bv}} \geq 0$.  There are two ways in which the blood vessels can degrade.  The first is a natural process which occurs at a constant rate $\mathcal{D}_{\mathrm{bv}} \geq 0$, and the second is through the overexposure of the toxic agent.  That is, the blood vessels degrade at a constant rate $\mathcal{D}_{\mathrm{bv}}$ when the concentration of the toxic agent $\sigma_{\tox}$ is higher than the critical value $\sigma_{\tox}^{*}$.  These considerations lead to
\begin{align*}
S_{b}(\bvarphi,\bm\sigma)  & = \underbrace{\mathcal{G}_{\mathrm{bv}} ab}_{\substack{\text{vessel growth due to} \\ \text{ angiogenic factors}} }  - \underbrace{\mathcal{D}_{\mathrm{bv}} b}_{\substack{\text{natural} \\ \text{degradation}}} - \underbrace{\mathcal{D}_{\mathrm{bv}} (\sigma_{\tox} - \sigma_{\tox}^{*})^{+} b}_{\substack{\text{degradation due to} \\ \text{toxic agents}}}.
\end{align*} 
\end{itemize}
Similar to Section \ref{sec:specificmodel:necroticcore}, for the choice of the free energy density $N(\bm{\varphi}, \bm{\sigma})$, we consider 
\begin{align}\label{samplefreeenergyN2}
N(\bm{\varphi}, \bm{\sigma}) = \frac{D_{\nut}}{2} \abs{\sigma_{\nut}}^{2} + \frac{D_{\mathrm{bv}}}{2} \abs{b}^{2} + \frac{D_{\mathrm{ang}}}{2} \abs{a}^{2} + \frac{D_{\tox}}{2} \abs{\sigma_{\tox}}^{2} - \chi_{\nut} \sigma_{\nut}\varphi_{P}.
\end{align}
The difference between \eqref{samplefreeenergyN1} and \eqref{samplefreeenergyN2} is the addition of the terms $\frac{D_{\mathrm{bv}}}{2} \abs{b}^{2} + \frac{D_{\mathrm{ang}}}{2} \abs{a}^{2}$ to model the diffusion of the blood vessel density and the angiogenesis factor, respectively.  Computing $\bm{N}_{,\bm{\sigma}}(\bvarphi,\bm\sigma)$ and taking $\TT{D}(\bm{\varphi}, \bm{\sigma})$ as the identity tensor in $\R^{4 \times 4}$, we arrive at the following system for the chemical species:
\begin{subequations}
\begin{align}
\pd_{t}\sigma_{\nut} + \div (\sigma_{\nut} \vec{v}) & = \div \left (D_{\nut} \nabla \sigma_{\nut} - \chi_{\nut} \nabla \varphi_{P} \right ) + \mathcal{B}_{\nut} b \left ( \sigma_{\mathrm{Sup}} - \sigma_{\nut}\right ) \\
\notag & \quad - \sigma_{\nut} \left ( \varphi_{P} \mathcal{C}_{P}  + \varphi_{Q} \mathcal{C}_{Q}  \right ), \\
\pd_{t}\sigma_{\tox} + \div (\sigma_{\tox} \vec{v}) & = \div \left ( D_{\tox} \nabla \sigma_{\tox} \right ) + \varphi_{N} \mathcal{R}_{\tox}  - \mathcal{D}_{\tox} \sigma_{\tox} - \mathcal{B}_{\tox} \sigma_{\tox} b, \\
\pd_{t}b + \div (b \vec{v}) & = \div \left ( D_{\mathrm{bv}} \nabla b \right )  + \mathcal{G}_{\mathrm{bv}} ab - \mathcal{D}_{\mathrm{bv}} b \left ( 1 + \left ( \sigma_{\tox} - \sigma_{\tox}^{*} \right)^{+} \right ), \\
\pd_{t}a + \div (a \vec{v}) & = \div \left ( D_{\mathrm{ang}} \nabla  a \right ) + \varphi_{Q} \mathcal{R}_{\mathrm{ang}}  - \mathcal{D}_{\mathrm{ang}} a.
\end{align}
\end{subequations}
We expect that $D_{\mathrm{bv}} = 0$ in practice, however choosing $D_{\mathrm{bv}}$ to be positive is beneficial for the analytical and numerical treatment of the equations.

An alternative way to model angiogenesis is as follows.  One could fix the blood vessel density on the boundary of the domain and assume that blood vessel growth is governed by chemotaxis towards the angiogenic factor, meaning that blood vessels are drawn towards regions with a high concentration of angiogenic factors. In this case, we neglect the first term of $S_{b}$, leading to
\begin{align*}
S_{b}(\bvarphi,\bm\sigma) = -\mathcal{D}_{\mathrm{bv}} b -\mathcal{D}_{\mathrm{bv}}  (\sigma_{\tox} - \sigma_{\tox}^{*})^{+} b.
\end{align*} 
If we consider the free energy density $N$ as in \eqref{samplefreeenergyN2}, with its partial derivative with respect to the vector $\bm{\sigma}$ given as
\begin{align*}
\bm{N}_{,\bm{\sigma}}(\bvarphi,\bm\sigma) = \left ( D_{\nut} \sigma_{\nut} + \chi_{\nut} \varphi_{P}, \; D_{\mathrm{bv}} b, \; D_{\mathrm{ang}} a, \; D_{\tox} \sigma_{\tox} \right )^{\top} \in \R^{4},
\end{align*}
then we may consider a second order mobility tensor $\TT{D}(\bm{\varphi}, \bm{\sigma}) \in \R^{4 \times 4}$ of the form
\begin{align*}
[\TT{D}(\bm{\varphi}, \bm{\sigma})]_{ij} = \begin{cases}
1 & \text{ if } i = j, \\
- \frac{\chi_{\mathrm{ang}}}{D_{\mathrm{ang}}} b & \text{ if } i = 2, j = 3, \\
0 &\text{ otherwise},
\end{cases}
\end{align*}
where $\chi_{\mathrm{ang}} \geq 0$ is a chemotactic sensitivity to the angiogenic factor.  Then, upon computing $\TT{D}(\bm{\varphi}, \bm{\sigma}) \nabla \bm{N}_{,\sigma}(\bm{\varphi}, \bm{\sigma})$, this yields the following convection-reaction-diffusion system for the blood vessel density and the angiogenic factor:
\begin{align*}
\pd_{t}b + \div (b \vec{v}) & = \div (D_{\mathrm{bv}} \nabla b - \chi_{\mathrm{ang}}b  \nabla a) -\mathcal{D}_{\mathrm{bv}} b  - \mathcal{D}_{\mathrm{bv}} (\sigma_{\tox} - \sigma_{\tox}^{*})^{+} b, \\
\pd_{t} a + \div (a \vec{v}) & = \div (D_{\mathrm{ang}} \nabla a) + \varphi_{Q} \mathcal{R}_{\mathrm{ang}} - \mathcal{D}_{\mathrm{ang}} a.
\end{align*}
The term $-\div ( \chi_{\mathrm{ang}} b \nabla a)$ in the equation for $b$ can also be found in the classical models for chemotaxis (see for example \cite{article:HillenPainter, article:Horstmann, article:KellerSegel70, article:KellerSegel71}).
We remark that the above modelling approach is different to that in \cite{FJCWLC} (see also \cite[Section 5.12]{book:CristiniLowengrub}), which utilises a random walk model for angiogenesis.

\subsection{Three phase model with necrotic cells}\label{sec:3phase}
In Section \ref{sec:numerics}, we perform numerical simulations of a three-component model, similar to \eqref{Intro:3component}, consisting of host, proliferating and necrotic cells, along with a single nutrient $\sigma = \sigma_{\nut}$.  Neglecting the quiescent cells $(\varphi_{Q})$ and the toxic intracelluar agent $(\sigma_{\tox})$, as well as the apoptosis of host cells $(\mathcal{A}_{H} = 0)$, the source terms from Section \ref{sec:specificmodel:necroticcore} now become
\begin{align*}
\mathcal{U}_{H}(\bvarphi,\bm\sigma) = 0, \; \mathcal{U}_{P}(\bvarphi,\bm\sigma) = \varphi_{P} (\mathcal{P} \sigma - \mathcal{A}_{P}), \; \mathcal{U}_{N}(\bvarphi,\bm\sigma) = \mathcal{A}_{P} \varphi_{P} - \mathcal{D}_{N} \varphi_{N}, 
\end{align*}
where the mass lost by the proliferating cells through apoptosis is equal to the mass gained by the necrotic cells.  In the case of equal densities $\overline{\rho}_{H} = \overline{\rho}_{P} = \overline{\rho}_{N} = 1$, this yields the vector $\bm{U}_{A}$ in \eqref{Intro:source:1}.  Alternatively, we can consider source terms of the form
\begin{align}\label{3phase:source:2}
\mathcal{U}_{H}(\bvarphi,\bm\sigma)  = 0, \; \mathcal{U}_{P}(\bvarphi,\bm\sigma)  = \eps^{-1} F(\varphi_{P}) \left ( \mathcal{P} \sigma - \mathcal{A} \right ), \; \mathcal{U}_{N}(\bvarphi,\bm\sigma)  = \eps^{-1} F(\varphi_{N}) \left ( \mathcal{A} - \mathcal{D}_{N} \right ),
\end{align}
where $F$ is a non-negative function satisfying $F(0) = F(1) = 0$, $F'(0) = F'(1) = 0$, and $\eps > 0$ is a parameter measuring the thickness of the interfacial layers, recall \eqref{Intro:energy}.  One such example is $F(s) = s^{2}(1-s)^{2}$, which in the case of equal densities $\overline{\rho}_{H} = \overline{\rho}_{P} = \overline{\rho}_{N} = 1$ leads to the vector $\bm{U}_{C}$ in \eqref{Intro:source:2}.  These source terms are chosen in the spirit of \cite{Kampmann} (see also \cite[\S 3.3.2]{GLSS}), where we note that $\varphi_{i}^{2}(1-\varphi_{i})^{2}$ is non-zero only near the vicinity of the interfacial layers, while the scaling with $\frac{1}{\eps}$ and the specific properties of $F$ ensure that these source terms only appear in the equation of motion for the interfaces when we consider the sharp interface limit $\eps \to 0$.

\section{Sharp interface asymptotics}\label{sec:SIM}
Different models have been suggested to describe free boundary problems involving multiphase tumour growth.  In particular, the effect of a necrotic core has been studied in \cite{CuiFriedman,EscherMM}.  In this section, we will perform a formally matched asymptotic analysis for the phase field model \eqref{multiphasetumourmodel} in order to derive new free boundary problems for tumour growth.  We make the following assumptions:
\begin{assump}\label{assump:Asymptotics}
\
\begin{enumerate}
\item $A = \frac{\beta}{\eps}$ and $B = \beta \eps$ for positive constants $\beta$ and $\eps$.
\item The mass exchange terms $\bm{U} \in \R^{L}$ and $\bm{S} \in \R^{M}$ depend only on $\bm{\varphi} \in \R^{L}$ and $\bm{\sigma} \in \R^{M}$, and not on any derivatives.
\item The mobility tensor $\TT{D}(\bvarphi,\bm\sigma) \in \R^{M \times d \times M \times d}$ is a strictly positive and smooth fourth order tensor for all $\bm{\varphi} \in \Gibbs$
and $\bm{\sigma} \in \R^{M}$.  Here by a strictly positive fourth order tensor $\TT{A}$ we mean $\bm{t} : \left ( \TT{A} \bm{t} \right ) > 0$ for all second order tensors $\bm{t} \in \R^{M \times d}$, $\bm{t} \neq \bm{0}$, and $\bm{t} : \left ( \TT{A} \bm{t} \right ) = 0 \Leftrightarrow \bm{t} = \bm{0}$.
\item $\Psi: \mathrm{G} \to \R_{\geq 0}$ is a smooth multi-well potential with $L$ equal minima at the points $\bm{e}_{l}$ satisfying $\Psi(\bm{e}_{l}) = 0$ for $1 \leq l \leq L$.  Furthermore, we assume that there exist constants $c_{1}, c_{2}, c_{3}$ and $p \geq 2$ such that
\begin{align*}
c_{1} \abs{\bm{\varphi}}^{p} \leq \Psi(\bm{\varphi}) \leq c_{2} \abs{\bm{\varphi}}^{p} \quad \text{ for } \abs{\bm{\varphi}} \geq c_{3}.
\end{align*}
\item We choose the gradient energy as
\begin{align*}
a(\bm{\eta}, \nabla \bm{\varphi}) = \frac{1}{2} \sum_{i=1}^{L} \abs{\nabla \varphi_{i}}^{2} = \frac{1}{2} \sum_{i=1}^{L} \sum_{k=1}^{d} \abs{\pd_{x_{k}} \varphi_{i}}^{2} = \frac{1}{2} (\nabla \bm{\varphi} : \nabla \bm{\varphi}).
\end{align*}
\item The mobility tensor $\TT{C}(\bvarphi,\bm\sigma) \in \R^{L \times d \times L \times d}$ for all $\bm{\varphi} \in \Gibbs$ and $\bm{\sigma} \in \R^{M}$
is a smooth fourth order tensor such that \eqref{Compatibility:symmetry=0} is satisfied and also fulfils $\TT{C}(\bvarphi,\bm\sigma) (\bm{a} \otimes \vec{b}) : (\bm{a} \otimes \vec{b}) > 0$ for all $\bm{0} \neq \bm{a} \in \{\unit\}^{\perp}$ and $\vec{0} \neq \vec{b} \in \R^{d}$.
\item For small $\eps$, we assume that the domain $\Omega$ can be divided into $L$ open subdomains $\Omega_{i}(\eps)$, $1 \leq i \leq L$, separated by interfaces $\Gamma_{ij}(\eps)$, $1 \leq i < j \leq L$ that do not intersect with each other or with the boundary $\pd \Omega$.  
\item We assume that there is a family $(\vec{v}_{\eps}, p_{\eps}, \bm{\varphi}_{\eps}, \bm{\sigma}_{\eps}, \bm{\mu}_{\eps})_{\eps > 0}$ of solutions to \eqref{multiphasetumourmodel}, which are sufficiently smooth and have an asymptotic expansion in $\eps$ in the bulk regions away from the interfaces $\{\Gamma_{ij}(\eps)\}_{1 \leq i < j \leq L}$ (the outer expansion), and another expansion in the interfacial regions close to the interfaces (the inner expansion).
\end{enumerate}
\end{assump}

\begin{remark}
In the above assumption, for a domain $\Omega \subset \R^{d}$, $d = 2,3$, we exclude the possibility of triple junction points in $\R^{2}$ and triple junction lines or quadruple junction points in $\R^{3}$.  Although the method of formally matched asymptotic analysis is able to derive certain boundary/angle conditions for the interfaces $\Gamma_{ij}$ at such a triple junction, as the case of junctions is not so relevant for tumour growth we will omit the analysis and refer the reader to \cite{Blanketal14, BronsardGarckeStoth98, GarckeNestlerStinner04, GarckeNestlerStoth98, NovickCohen00}.
\end{remark}

With the above assumptions, \eqref{multiphasetumourmodel} becomes
\begin{subequations}\label{asymp:multiphase}
\begin{align}
\div \vec{v} & = \unit\cdot \bm{U}(\bm{\varphi}, \bm{\sigma}), \label{asym:multi:div} \\
\vec{v} & = -K \nabla p + K (\nabla \bm{\varphi})^{\top}  (\bm{\mu} - \bm{N}_{,\bm{\varphi}}(\bm{\varphi}, \bm{\sigma})), \label{asym:multi:darcy} \\
\pd_{t} \bm{\varphi} + \div (\bm{\varphi} \otimes \vec{v}) & = \div ( \TT{C}(\bm{\varphi}, \bm{\sigma}) \nabla \Proj{\bm{\mu}}) + \bm{U}(\bm{\varphi}, \bm{\sigma}), \label{asym:multi:varphi} \\
\Proj{\bm{\mu}} & = - \beta \eps \Laplace \bm{\varphi} + \beta \eps^{-1} \Proj{\left (\Psi_{,\bm{\varphi}}(\bm{\varphi}) + \bm{N}_{,\bm{\varphi}}(\bm{\varphi}, \bm{\sigma}) \right )}, \label{asym:multi:mu} \\
\pd_{t} \bm{\sigma} + \div (\bm{\sigma} \otimes \vec{v}) & = \div ( \TT{D}(\bm{\varphi}, \bm{\sigma}) \nabla \bm{N}_{,\bm{\sigma}}(\bm{\varphi}, \bm{\sigma})) + \bm{S}(\bm{\varphi}, \bm{\sigma}). \label{asym:multi:sigma}
\end{align}
\end{subequations}
The idea of the method is to plug the outer and inner expansions in the model equations and solve them order by order.  In addition, we have to define a suitable region where these expansions should match up.

We will use the following notation: $\eqref{asym:multi:div}_{O}^{\alpha}$ and $\eqref{asym:multi:div}_{I}^{\alpha}$  denote the terms resulting from the order $\alpha$ outer and inner expansions of \eqref{asym:multi:div}, respectively.  For convenience, we will denote $\bm{N}_{,\bm{\sigma}}(\bm{\varphi}, \bm{\sigma})$ by the variable $\bm{\theta}$.

\subsection{Outer expansions}\label{sec:outer}
We assume that for $f_{\eps} \in \{ \vec{v}_{\eps}, \bm{\varphi}_{\eps}, \bm{\sigma}_{\eps}, \bm{\mu}_{\eps}, p_{\eps}, \bm{\theta}_{\eps} \}$, the following outer expansions hold:
\begin{align*}
f_{\eps} = f_{0} + \eps f_{1} + \eps^{2} f_{2} + \dots,
\end{align*}
where to ensure that the constraint $\bm{\varphi} \in \Gibbs$ is satisfied, we additionally assume that
\begin{align*}
\bm{\varphi}_{0} \in \Gibbs, \quad \bm{\varphi}_{k} \in \TangGibbs \quad \forall k \geq 1.
\end{align*}
Note that we can relate the expansions for $\bm{\theta}_{\eps}$ by means of Taylor's expansion: 
\begin{align}\label{outer:theta}
\bm{\theta}_{0} = \bm{N}_{,\bm{\sigma}}(\bm{\varphi}_{0}, \bm{\sigma}_{0}), \quad \bm{\theta}_{1} = \bm{N}_{,\bm{\sigma} \bm{\varphi}}(\bm{\varphi}_{0}, \bm{\sigma}_{0}) \bm{\varphi}_{1} + \bm{N}_{,\bm{\sigma} \bm{\sigma}}(\bm{\varphi}_{0}, \bm{\sigma}_{0}) \bm{\sigma}_{1}, \quad \dots.
\end{align}
To leading order $\eqref{asym:multi:mu}_{O}^{-1}$ we have 
\begin{align}\label{outer:varphi:-1}
\Proj{\Psi_{,\bm{\varphi}}(\bm{\varphi}_{0})} = \Psi_{,\bm{\varphi}}(\bm{\varphi}_{0}) -  \frac{1}{L} \sum_{i=1}^{L} \frac{\pd \Psi}{\pd \varphi_{i}}(\bm{\varphi}_{0}) \unit = \bm{0}.
\end{align}
The stable solutions to \eqref{outer:varphi:-1} are the minima of $\Psi$, that is, $\bm{\varphi}_{0} = \bm{e}_{i}$, $1 \leq i \leq L$.  Thus, to leading order the domain $\Omega$ is partitioned into $L$ regions corresponding to the stable minima of $\Psi$.  We define
\begin{align*}
\Omega_{i} = \{ \vec{x} \in \Omega : \bm{\varphi}_{0}(\vec{x}) = \bm{e}_{i} \} \quad \text{ for } 1 \leq i \leq L.
\end{align*}
Since $\nabla \bm{\varphi}_{0} = \TT{0} \in \R^{L \times d}$ is the zero tensor in the bulk regions $\Omega_{i}$, $1 \leq i \leq L$, we obtain from \eqref{asym:multi:div}, \eqref{asym:multi:darcy}, \eqref{asym:multi:varphi} and \eqref{asym:multi:sigma} to zeroth order in each bulk region:
\begin{subequations}
\begin{align}
\div \vec{v}_{0} & = \unit\cdot \bm{U}(\bm{\varphi}_{0}, \bm{\sigma}_{0}), \\
\vec{v}_{0} & = -K \nabla p_{0}, \\
-\div (\TT{C}(\bm{\varphi}_{0}, \bm{\sigma}_{0}) \nabla (\Proj{\bm{\mu}_{0}}) ) & = \bm{U}(\bm{\varphi}_{0}, \bm{\sigma}_{0}) - (\unit\cdot \bm{U}(\bm{\varphi}_{0}, \bm{\sigma}_{0}))\bm{\varphi}_{0} , \\
\pd_{t} \bm{\sigma}_{0} + \div (\bm{\sigma}_{0} \otimes \vec{v}_{0}) & = \div (\TT{D}(\bm{\varphi}_{0}, \bm{\sigma}_{0}) \nabla \bm{N}_{,\bm{\sigma}}(\bm{\varphi}_{0}, \bm{\sigma}_{0})) + \bm{S}(\bm{\varphi}_{0},  \bm{\sigma}_{0}).
\end{align}
\end{subequations}

\subsection{Inner expansions and matching conditions}
In this section we fix $1 \leq i < j \leq L$ and construct a solution that makes a transition from $\Omega_{i}$ to $\Omega_{j}$ across a smoothly evolving hypersurface $\Gamma = \Gamma_{ij}$ moving with normal velocity $\mathcal{V}$.  Let $d(\vec{x})$ denote the signed distance function to $\Gamma$, and set $z = \frac{d}{\eps}$ as the rescaled distance variable.  Here we use the convention that $d(\vec{x}) < 0$ in $\Omega_{i}$ and $d(\vec{x}) > 0$ in $\Omega_{j}$.  Thus the gradient $\nabla d$ points from $\Omega_{i}$ to $\Omega_{j}$ and we may use $\nabla d$ on $\Gamma$ as a unit normal $\vec{\nu}$.

Let $g(t,s)$ denote a parameterisation of $\Gamma$ by arclength $s$, and in a tubular neighbourhood of $\Gamma$, for smooth functions $f(\vec{x})$, we have
\begin{align*}
f(\vec{x}) = f(g(t,s) + \eps z \vec{\nu}(g(t,s))) =: F(t,s,z).
\end{align*}
In this new $(t,s,z)$-coordinate system, the following change of variables apply (see
\cite{AGG, GarckeStinner06}):
\begin{align*}
\pd_{t} f = -\frac{1}{\eps} \mathcal{V} \pd_{z} F + \text{ h.o.t.}, \quad \nabla_{x} f = \frac{1}{\eps}\pd_{z} F \vec{\nu} + \nabla_{\Gamma} F + \text{ h.o.t.},
\end{align*}
where $\nabla_{\Gamma}h$ denotes the surface gradient of $h$ on $\Gamma$ and h.o.t. denotes higher order terms with respect to $\eps$.  In particular, we have
\begin{align*}
\Laplace f  = \div_{x} (\nabla_{x} f) & = \frac{1}{\eps^{2}} \pd_{zz}F + \frac{1}{\eps}\underbrace{\div_{\Gamma} (\pd_{z}F \vec{\nu})}_{= - \kappa \pd_{z}F} + \text{ h.o.t.},
\end{align*}
where $\kappa = - \div_{\Gamma} \vec{\nu}$ is the mean curvature of $\Gamma$.  Moreover, if $\bm{f}$ is a vector-valued function with $\bm{F}(t,s,z) = \bm{f}(t,\vec{x})$ for $\vec x$ in a tubular neighbourhood of $\Gamma$, then we obtain
\begin{align*}
\pd_{t} \bm{f} & = -\frac{1}{\eps} \mathcal{V} \pd_{z} \bm{F} + \text{ h.o.t.}, \quad
\nabla_{x} \bm{f} = \frac{1}{\eps} \pd_{z} \bm{F} \otimes \vec{\nu} + \surf \bm{F} + \text{ h.o.t.}, \\
\div_{x} \bm{f} & = \frac{1}{\eps}\pd_{z} \bm{F} \cdot \vec{\nu} + \div_{\Gamma} \bm{F} + \text{ h.o.t.}
\end{align*}
We denote the variables $\bm{\varphi}_{\eps}$, $\vec{v}_{\eps}$, $p_{\eps}$, $\bm{\mu}_{\eps}$, $\bm{\sigma}_{\eps}$, $\bm{\theta}_{\eps}$ in the new coordinate system by $\bm{\Phi}_{\eps}$, $\vec{V}_{\eps}$, $P_{\eps}$, $\bm{\Upsilon}_{\eps}$, $\bm{\Sigma}_{\eps}$, $\bm{\Theta}_{\eps}$, respectively.  We further assume that they have the following inner expansions:
\begin{align*}
F_{\eps}(t, s, z) = F_{0}(t, s, z) + \eps F_{1}(t, s, z) + \eps^{2} F_{2}(t,s,z) + \dots,
\end{align*}
for $F_{\eps} \in \{ \bm{\Phi}_{\eps}, \vec{V}_{\eps}, P_{\eps}, \bm{\Upsilon}_{\eps}, \bm{\Sigma}_{\eps}, \bm{\Theta}_{\eps} \}$ such that
\begin{align*}
\bm{\Phi}_{0} \in \Gibbs, \quad \bm{\Phi}_{k} \in \TangGibbs \quad \forall k \geq 1.
\end{align*}   
to ensure that the constraint $\bm{\varphi} \in \Gibbs$ is satisfied.  Analogous to \eqref{outer:theta}, by Taylor's expansion, we have 
\begin{align}\label{inner:theta:relations}
\bm{\Theta}_{0} = \bm{N}_{,\bm{\sigma}}(\bm{\Phi}_{0}, \bm{\Sigma}_{0}), \quad \bm{\Theta}_{1} = \bm{N}_{,\bm{\sigma} \bm{\varphi}}(\bm{\Phi}_{0}, \bm{\Sigma}_{0}) \bm{\Phi}_{1} + \bm{N}_{,\bm{\sigma} \bm{\sigma}}(\bm{\Phi}_{0}, \bm{\Sigma}_{0}) \bm{\Sigma}_{1}, \quad \dots.
\end{align}
In order to match the inner expansions valid in the interfacial region to the outer expansions of Section \ref{sec:outer}, we employ the matching conditions, see \cite{GarckeStinner06}:
\begin{align}
\label{MatchingCond1}
\lim_{z \to \pm l} F_{0}(t,s,z) &= f_{0}^{\pm}(t,\vec{x}), \\
\label{MatchingCond2}
\lim_{z \to \pm l} \pd_{z}F_{0}(t,s,z) &= 0 ,\\
\label{MatchingCond3}
\lim_{z \to \pm l} \pd_{z} F_{1}(t,s,z) &= \nabla f_{0}^{\pm}(t,\vec{x}) \cdot \vec{\nu},
\end{align}
where $f_{0}^{\pm}(t, \vec{x})= \lim_{\delta \to 0} f_{0}(t, \vec{x} \pm \delta \vec{\nu}(\vec{x}))$ for $\vec{x} \in \Gamma$ and $\delta > 0$.  Here we use the convention that for a vectorial quantity $\bm{f}_{0}$, the right hand side of \eqref{MatchingCond3} reads as $(\nabla \bm{f}_{0}^{\pm}) \vec{\nu}$.  Moreover, we use the following notation:  Let $\delta > 0$ and for $\vec{x} \in \Gamma$ with $x - \delta \vec{\nu}(\vec{x}) \in \Omega_{i}$ and $x + \delta \vec{\nu}(\vec{x}) \in \Omega_{j}$, we denote the jump of a scalar quantity $f$ across the interface by
\begin{align}\label{defn:jump}
\jump{f}{i}{j} = \lim_{\delta \to 0} \left( f(t,\vec{x} + \delta \vec{\nu}(\vec{x})) -  f(t, \vec{x} - \delta \vec{\nu}(\vec{x})) \right ).
\end{align}
For a vectorial quantity $\bm{f} \in \R^{k}$, we define
\begin{align*}
\jump{\bm{f}}{i}{j} = \lim_{\delta \to 0} \left ( \bm{f}(t,\vec{x} + \delta \vec{\nu}) - \bm{f}(t,\vec{x} - \delta \vec{\nu}) \right ) = ( \jump{f_{1}}{i}{j}, \dots \jump{f_{k}}{i}{j})^{\top}.
\end{align*}
It will be useful to compute the expansion for the term $\div (\TT{D}(\bm{\varphi}, \bm{\sigma}) \nabla \bm{N}_{,\bm{\sigma}}(\bm{\varphi}, \bm{\sigma}))$ as follows:  For fixed $1 \leq l,n \leq d$ and $1 \leq k,m \leq M$, we find from the above change of variables formula 
\begin{align*}
\pd_{x_{l}} \left ( \left ( \TT{D}(\bm{\varphi}, \bm{\sigma})\right )_{klmn} \pd_{x_{n}} \theta_{m}\right ) & = \frac{1}{\eps} \pd_{z} \left ( \left ( \TT{D}(\bm{\Phi}, \bm{\Sigma}) \right )_{klmn} \left ( \frac{1}{\eps} \pd_{z} \Theta_{m} \nu_{n} + \underline{D}_{n} \Theta_{m} \right ) \right ) \nu_{l} \\
& \quad + \underline{D}_{l} \left ( \left ( \TT{D}(\bm{\Phi}, \bm{\Sigma}) \right )_{klmn} \left ( \frac{1}{\eps} \pd_{z} \Theta_{m} \nu_{n} + \underline{D}_{l} \Theta_{m} \right ) \right ) + \text{ h.o.t.},
\end{align*}
where $\underline{D}_{n}$ denotes the $n$th component of the surface gradient, i.e., $\surf f = (\underline{D}_{1} f, \dots, \underline{D}_{d} f)^{\top}$.  Plugging in the expansion (where we use $\Theta_{m,q}$ to denote the $q$th term of the inner expansion for the $m$th component of $\bm{\Theta}$) and using Taylor's theorem we have
\begin{equation}\label{divergence:sigma:expansion}
\begin{aligned}
& \pd_{x_{l}} \left ( \left ( \TT{D}(\bm{\varphi}, \bm{\sigma})\right )_{klmn} \pd_{x_{n}} \theta_{m} \right ) \\
& = \frac{1}{\eps^{2}} \pd_{z} \left ( \left ( \TT{D}(\bm{\Phi}_{0}, \bm{\Sigma}_{0}) \right )_{klmn} \pd_{z} \Theta_{m,0} \right ) \nu_{l}\nu_{n}  + \frac{1}{\eps} \pd_{z} \left ( \left ( \TT{D}(\bm{\Phi}_{0}, \bm{\Sigma}_{0}) \right )_{klmn}  \pd_{z} \Theta_{m,1}  \right ) \nu_{l} \nu_{n} \\
& \quad + \frac{1}{\eps} \left [ \pd_{z} \left ( \left ( \TT{D}(\bm{\Phi}_{0}, \bm{\Sigma}_{0}) \right )_{klmn} \underline{D}_{n} \Theta_{m,0} \nu_{l} \right ) + \underline{D}_{l} \left ( \left ( \TT{D}(\bm{\Phi}_{0}, \bm{\Sigma}_{0}) \right )_{klmn} \pd_{z} \Theta_{m,0} \nu_{n} \right )\right ] \\
& \quad  +  \frac{1}{\eps} \pd_{z} \left ( \left ( \sum_{s=1}^{L}  \frac{\pd (\TT{D}(\bm{t}, \bm{w}))_{klmn}}{\pd t_{s}}  \Phi_{s,1} + \sum_{s=1}^{M} \frac{\pd (\TT{D}(\bm{t}, \bm{w}))_{klmn}}{\pd w_{s}}  \Sigma_{s,1} \right ) \left ( \pd_{z} \Theta_{m,0} \nu_{n}  \right ) \right ) \nu_{l} \\
& \quad  + \text{ h.o.t.},
\end{aligned}
\end{equation}
where we evaluate the last term at $\bm{t} = \bm{\Phi}_{0}$ and $\bm{w} = \bm{\Sigma}_{0}$.  Using that
\begin{align*}
\div (\TT{D}(\bm{\varphi}, \bm{\sigma}) \nabla \bm{N}_{,\bm{\sigma}}(\bm{\varphi}, \bm{\sigma})) = \left (\sum_{l,n=1}^{d} \sum_{m=1}^{M} \pd_{x_{l}} \left ( \left ( \TT{D}(\bm{\varphi}, \bm{\sigma})\right )_{klmn} \pd_{x_{n}} \theta_{m} \right ) \right )_{1 \leq k \leq L},
\end{align*}
we obtain the expansion for the term $\div (\TT{D}(\bm{\varphi}, \bm{\sigma}) \nabla \bm{N}_{,\bm{\sigma}}(\bm{\varphi}, \bm{\sigma}))$.  One can also derive a similar expansion for $\div (\TT{C}(\bm{\varphi}, \bm{\sigma}) \nabla (\Proj{\bm{\mu}}))$.
\subsubsection{Expansions to leading order}
To leading order $\eqref{asym:multi:mu}_{I}^{-1}$ we obtain
\begin{align}
\label{inner:mu:-1}
\bm{0} = -\pd_{z} ( \pd_{z} \bm{\Phi}_{0} \otimes \vec{\nu}) \cdot \vec{\nu} + \Proj{\Psi_{,\bm{\varphi}}(\bm{\Phi}_{0})} = -\pd_{zz} \bm{\Phi}_{0}  + \Psi_{,\bm{\varphi}}(\bm{\Phi}_{0}) - \frac{1}{L} (\Psi_{,\bm{\varphi}}(\bm{\Phi}_{0}) \cdot \unit) \unit.
\end{align}
This is a second order differential equation for $\bm{\Phi}_{0}(t,s,\cdot)$, and for each $s$ we solve the above ordinary differential equation (in $z$) with the boundary conditions
\begin{align}\label{ODE:bdycond}
\lim_{z \to \infty}\bm{\Phi}_{0}(t,s,z) = \bm{e}_{j}, \quad \lim_{z \to -\infty} \bm{\Phi}_{0}(t,s,z) = \bm{e}_{i},
\end{align}
which then yields a vector-valued function that connects $\bm{e}_{i}$ to $\bm{e}_{j}$ and hence the values of the phase fields in $\Omega_{i}$ and $\Omega_{j}$.  By the assumptions satisfied by $\Psi$ in Assumption \ref{assump:Asymptotics}, it is shown in Sternberg \cite[Lemma, p. 801]{Sternberg} that for any $\bm{u} \in \Gibbs$, there exists a curve $\gamma_{\bm{u}} : [-1,1] \to \Gibbs$ such that $\gamma_{\bm{u}}(-1) = \bm{e}_{i}$ and $\gamma_{\bm{u}}(1) = \bm{u}$ and the Lipschitz continuous function
\begin{align*}
q(\bm{u}) = \int_{-1}^{1} \sqrt{\Psi(\gamma_{\bm{u}}(t))} \abs{\gamma_{\bm{u}}'(t)} \dt \quad  \text{ satisfies }  \quad \abs{\nabla q(\bm{u})} = \sqrt{\Psi(\bm{u})} \quad \text{ for a.e. } \bm{u} \in \Gibbs.
\end{align*}
Let us define $\beta : (-\infty, \infty) \to (-1,1)$ as the monotone solution of
\begin{align*}
\beta'(z) = \frac{\sqrt{2 \Psi(\gamma_{\bm{e}_{j}}(\beta(z)))}}{\abs{\gamma'_{\bm{e}_{j}}(\beta(z))}}, \quad \beta(0) = 0
\end{align*}
and then set
\begin{align*}
\overline{\bm{\Phi}}(z) = \gamma_{\bm{e}_{j}}(\beta(z)).
\end{align*}
Then, it holds that $\abs{\overline{\bm{\Phi}}'(z)} = \sqrt{2 \Psi(\overline{\bm{\Phi}}(z))}$ and
\begin{equation}\label{defn:surfacetension}
\begin{aligned}
\sqrt{2} q(\bm{e}_{j}) & = \sqrt{2} \int_{-\infty}^{\infty} \sqrt{\Psi(\gamma_{\bm{e}_{j}}(\beta(z)))} \abs{\gamma'_{\bm{e}_{j}}(\beta(z))} \beta'(z) \dz \\
& = 2 \int_{-\infty}^{\infty} \Psi(\overline{\bm{\Phi}}(z)) \dz = \int_{-\infty}^{\infty} \Psi(\overline{\bm{\Phi}}(z)) + \frac{1}{2}\abs{\overline{\bm{\Phi}}'(z)}^{2} \dz.
\end{aligned}
\end{equation}
It follows that $\overline{\bm{\Phi}}$ is a candidate solution to the following problem
\begin{align*}
\inf_{\bm{\zeta} \in \Gibbs, \bm{\zeta}(-\infty) = \bm{e}_{i}, \bm{\zeta}(\infty) = \bm{e}_{j}} \int_{-\infty}^{\infty} \Psi(\bm{\zeta}(\tau)) + \frac{1}{2} \abs{\bm{\zeta}'(\tau)}^{2} \dd \tau.
\end{align*}
Computing its Euler--Lagrange equations (subject to the constraint $\bm{\zeta} \in \Gibbs$) yields that
\begin{align*}
\overline{\bm{\Phi}}''(z) = \Proj{\Psi_{,\bm{\varphi}}(\overline{\bm{\Phi}}(z))} = \Psi_{,\bm{\varphi}}(\overline{\bm{\Phi}}(z)) - \frac{1}{L} (\Psi_{,\bm{\varphi}}(\overline{\bm{\Phi}}(z)) \cdot \unit) \unit,
\end{align*}
and if we consider
\begin{align*}
\bm{\Phi}_{0}(t,s,z) = \overline{\bm{\Phi}}(z),
\end{align*}
then $\bm{\Phi}_{0}(z)$ satisfies \eqref{inner:mu:-1} and \eqref{ODE:bdycond}.  Furthermore, multiplying \eqref{inner:mu:-1} with $\bm{\Phi}'_{0} \in \TangGibbs$, integrating with respect to $z$ and applying the matching condition \eqref{MatchingCond1} to $\bm{\Phi}_{0}$, leads to the so-called equipartition of energy:
\begin{align*}
\Psi(\bm{\Phi}_{0}(z)) = \frac{1}{2} \abs{\bm{\Phi}'_{0}(z)}^{2} \quad \forall z \in \R,
\end{align*}
and we define the surface energy $\gamma_{ij}$ to be
\begin{align}\label{defn:surfaceenergy}
\gamma_{ij} = \sqrt{2} q(\bm{e}_{j}) = \int_{-\infty}^{\infty} \Psi( \bm{\Phi}_{0}(z)) + \frac{1}{2} \abs{\bm{\Phi}'_{0}(z)}^{2} \dz = \int_{-\infty}^{\infty} \abs{\bm{\Phi}'_{0}(z)}^{2} \dz.
\end{align}
Next, $\eqref{asym:multi:div}_{I}^{-1}$ gives
\begin{align}
\label{inner:div:-1}
\pd_{z} \vec{V}_{0} \cdot \vec{\nu} = \pd_{z} (\vec{V}_{0} \cdot \vec{\nu}) = 0.
\end{align}
Integrating with respect to $z$ and using the matching condition (\ref{MatchingCond1}) applied to $\vec{v}_{0}$ leads to
\begin{align}
\label{jump:velo:zero}
\jump{\vec{v}_{0}}{i}{j} \cdot \vec{\nu} = 0.
\end{align}
{From} $\eqref{asym:multi:sigma}_{I}^{-2}$ and \eqref{divergence:sigma:expansion} 
we have
\begin{align}
\label{inner:sigma:-2}
\sum_{l,n=1}^{d} \sum_{m=1}^{M} \pd_{z} \left ( ( \TT{D}(\bm{\Phi}_{0}, \bm{\Sigma}_{0}))_{klmn} \pd_{z} \Theta_{m,0} \right ) \nu_{l} \nu_{n} = \bm{0} \in \R^{M}.
\end{align}
Multiplying by $\Theta_{k,0}$ and summing from $k = 1$ to $M$ and then integrating with respect to $z$, we obtain from integration by parts and the matching condition (\ref{MatchingCond2}) applied to $\bm{\Theta}_{0}$ that
\begin{align*}
0 & = -\sum_{k,m=1}^{M} \sum_{l,n=1}^{d} \int_{-\infty}^{\infty} \left ( \TT{D}(\bm{\Phi}_{0}, \bm{\Sigma}_{0}) \right )_{klmn} \pd_{z} \Theta_{m,0} \nu_{n} \pd_{z} \Theta_{k,0} \nu_{l} \dz \\
& = -\int_{-\infty}^{\infty} \left ( \TT{D}(\bm{\Phi}_{0}, \bm{\Sigma}_{0}) (\pd_{z} \bm{\Theta}_{0} \otimes \vec{\nu}) \right ) : (\pd_{z} \bm{\Theta}_{0} \otimes \vec{\nu}) \dz.
\end{align*}
The strict positivity of $\TT{D}$ (in the sense of Assumption \ref{assump:Asymptotics}) yields
\begin{align}
\label{inner:Theta:constant}
\pd_{z} \bm{\Theta}_{0}(t,s,z) = \bm{0} \in \R^{M} \quad \forall z \in \R,
\end{align}
i.e., $\bm{\Theta}_{0}$ is independent of $z$.  Moreover, integrating (\ref{inner:Theta:constant}) with respect to $z$ and using the matching condition (\ref{MatchingCond1}) applied to $\bm{\Theta}_{0}$ gives
\begin{align}
\label{inner:jump:Nsigma}
\jump{\bm{\theta}_{0}}{i}{j} = \jump{\bm{N}_{,\bm{\sigma}}(\bm{\varphi}_{0}, \bm{\sigma}_{0})}{i}{j} = \bm{0} \in \R^{M}.
\end{align} 
Meanwhile, from $\eqref{asym:multi:varphi}_{I}^{-2}$, we have
\begin{align}
\label{inner:varphi:-2}
\R^{L} \ni \bm{0} = \left (\sum_{l,n=1}^{d} \sum_{m=1}^{L} \pd_{z} \left ( \left ( \TT{C}(\bm{\Phi}_{0}, \bm{\Sigma}_{0}) \right )_{klmn} \pd_{z} \Proj{\Upsilon_{m,0}} \right ) \nu_{l} \nu_{m} \right )_{1 \leq k \leq L}.
\end{align}
Multiplying by $\Proj{\Upsilon_{k,0}} \in \TangGibbs$ and summing from $k = 1$ to $L$, an analysis  similar to the above for $\Theta_{0}$ using the assumptions on $\TT{C}$ yields that
\begin{align}
\label{inner:mu:constant}
\pd_{z} \Proj{\bm{\Upsilon}_{0}}(t,s,z) = \bm{0} \in \R^{L} \quad \forall z \in \R \quad \Rightarrow \quad
\jump{\Proj{\bm{\mu}_{0}}}{i}{j} = \bm{0} \in \R^{L}.
\end{align}
Lastly, $\eqref{asym:multi:darcy}_{I}^{-1}$ yields 
\begin{align}
\label{inner:darcy:-1}
\bm{0} = -\pd_{z} P_{0} \vec{\nu} + \left ( \bm{\Phi}'_{0} \cdot \left ( \bm{\Upsilon}_{0} - \bm{N}_{, \bm{\varphi}}(\bm{\Phi}_{0}, \bm{\Sigma}_{0}) \right ) \right ) \vec{\nu}.
\end{align}
Taking the scalar product with $\vec{\nu}$ and then integrating with respect to $z$ leads to
\begin{align}\label{inner:darcy:-1:intergated}
\jump{p_{0}}{i}{j} = \int_{-\infty}^{\infty}  \bm{\Phi}'_{0} \cdot ( \bm{\Upsilon}_{0} - \bm{N}_{,\bm{\varphi}}(\bm{\Phi}_{0}, \bm{\Sigma}_{0}) ) \dz = \int_{-\infty}^{\infty}  \bm{\Phi}'_{0} \cdot ( \Proj{\bm{\Upsilon}_{0}} - \bm{N}_{,\bm{\varphi}}(\bm{\Phi}_{0}, \bm{\Sigma}_{0}) ) \dz ,
\end{align}
where we used the matching condition (\ref{MatchingCond1}) applied to $P_{0}$ and the fact that $\bm{\Phi}_{0}' \in \TangGibbs$ and so $\bm{\Phi}_{0}' \cdot \bm{\Upsilon}_{0} = \bm{\Phi}_{0}' \cdot \Proj{\bm{\Upsilon}_{0}}$.  Thanks to the fact that $\Proj{\bm{\Upsilon}_{0}}$ is independent of $z$, we find that
\begin{align}\label{intProjMu0Phi0'}
\int_{-\infty}^{\infty} \bm{\Phi}'_{0} \cdot \Proj{\bm{\Upsilon}_{0}} \dz = \int_{-\infty}^{\infty} \pd_{z} \left ( \Proj{\bm{\Upsilon}_{0}} \cdot  \bm{\Phi}_{0} \right ) \dz = \jump{\Proj{\bm{\mu}_{0}} \cdot \bm{\varphi}_{0}}{i}{j}.
\end{align}
Recalling $\bm{\Theta}_{0} = N_{,\bm{\sigma}}(\bm{\Phi}_{0}, \bm{\Sigma}_{0})$ from \eqref{inner:theta:relations}, we have
\begin{equation}\label{intPhi0'Xi0}
\begin{aligned}
\int_{-\infty}^{\infty}  \bm{\Phi}'_{0} \cdot \bm{N}_{,\bm{\varphi}}(\bm{\Phi}_{0}, \bm{\Sigma}_{0})  \dz & = \jump{N(\bm{\Phi}_{0}, \bm{\Sigma}_{0})}{i}{j} - \int_{-\infty}^{\infty} \pd_{z} \bm{\Sigma}_{0} \cdot \bm{\Theta}_{0} \dx \\
& = \jump{N(\bm{\varphi}_{0}, \bm{\sigma}_{0})}{i}{j}  - \jump{ \bm{\sigma}_{0}}{i}{j} \cdot \bm{N}_{,\bm{\sigma}}(\bm{\varphi}_{0}, \bm{\sigma}_{0}), 
\end{aligned}
\end{equation}
and so \eqref{inner:darcy:-1:intergated} becomes
\begin{align}
\label{inner:darcy:-1:int:simplified}
\jump{p_{0} - \Proj{\bm{\mu}_{0}} \cdot \bm{\varphi}_{0} - N(\bm{\varphi}_{0}, \bm{\sigma}_{0}) + \bm{N}_{,\bm{\sigma}}(\bm{\varphi}_{0}, \bm{\sigma}_{0}) \cdot \bm{\sigma}_{0}}{i}{j} = 0.
\end{align}

\subsubsection{Expansions to first order}
To first order, we find from $\eqref{asym:multi:mu}_{I}^{0}$
\begin{equation}
\label{inner:mu:0}
\begin{aligned}
\Proj{\bm{\Upsilon}_{0}} & =  \beta \Proj{\Psi_{,\bm{\varphi} \bm{\varphi}}(\bm{\Phi}_{0}) \bm{\Phi}_{1}} + \Proj{\bm{N}_{,\bm{\varphi}}(\bm{\Phi}_{0}, \bm{\Sigma}_{0})}  - \beta \pd_{zz} \bm{\Phi}_{1} - \beta \div_{\Gamma} \left ( \bm{\Phi}'_{0} \otimes \vec{\nu} \right ),
\end{aligned}
\end{equation}
where we used that $\bm{\Phi}_{0}$ is only a function of $z$ and so $\surf \bm{\Phi}_{0} = \TT{0}$ is the zero tensor.  As $\bm{\Phi}_{0}' \in \TangGibbs$, $\Proj{\bm{f}} \cdot \bm{\Phi}_{0}' = \bm{f} \cdot \bm{\Phi}_{0}'$, and hence after multiplying \eqref{inner:mu:0} with $\bm{\Phi}'_{0}$ and integrating over $z$ we obtain
\begin{equation}\label{inner:mu:0:integrated}
\begin{aligned}
& \int_{-\infty}^{\infty} \frac{1}{\beta} (\Proj{\bm{\Upsilon}_{0}} - \bm{N}_{,\bm{\varphi}}(\bm{\Phi}_{0}, \bm{\Sigma}_{0})) \cdot \bm{\Phi}'_{0} \dz \\
& \quad = \int_{-\infty}^{\infty} (\Psi_{,\bm{\varphi} \bm{\varphi}}(\bm{\Phi}_{0}) \bm{\Phi}_{1}) \cdot \bm{\Phi}'_{0} - \pd_{zz} \bm{\Phi}_{1} \cdot \bm{\Phi}'_{0} + \kappa \abs{\bm{\Phi}'_{0}}^{2} \dz,
\end{aligned}
\end{equation}
where $\kappa = - \div_{\Gamma} \vec{\nu}$ is the mean curvature of $\Gamma$ and we have used that $\div_{\Gamma}(\bm{\Phi}'_{0} \otimes \vec{\nu}) = \bm{\Phi}'_{0} \div_{\Gamma} \vec{\nu} = -\kappa \bm{\Phi}'_{0}$.  Due to the symmetry of the tensor $\Psi_{,\bm{\varphi} \bm{\varphi}}$ it holds that \begin{align}\label{solvability:part:1}
(\Psi_{,\bm{\varphi} \bm{\varphi}} (\bm{\Phi}_{0})) \bm{\Phi}_{1} \cdot \bm{\Phi}'_{0} = (\Psi_{,\bm{\varphi} \bm{\varphi}} (\bm{\Phi}_{0}))  \bm{\Phi}'_{0} \cdot \bm{\Phi}_{1} = \pd_{z} \left ( \Psi_{,\bm{\varphi}}(\bm{\Phi}_{0}) \right ) \cdot \bm{\Phi}_{1}.
\end{align}
Then by integrating by parts we obtain from \eqref{inner:mu:-1} and the matching conditions \eqref{MatchingCond1}, \eqref{MatchingCond2} applied to $\bm{\Phi}_{0}$ that
\begin{align*}
\int_{-\infty}^{\infty} \pd_{z} (\Psi_{, \bm{\varphi}}(\bm{\Phi}_{0})) \cdot \bm{\Phi}_{1} - \pd_{zz} \bm{\Phi}_{1} \cdot \bm{\Phi}'_{0} \dz & = \int_{-\infty}^{\infty} (\Psi_{,\bm{\varphi}}(\bm{\Phi}_{0}) - \bm{\Phi}''_{0}) \cdot \pd_{z} \bm{\Phi}_{1} \dz \\
& \quad + \jump{\Psi_{,\bm{\varphi}}(\bm{\Phi}_{0}) \cdot \bm{\Phi}_{1} + \bm{\Phi}'_{0} \cdot \pd_{z} \bm{\Phi}_{1}}{z=-\infty}{z=\infty} = 0.
\end{align*}
Using \eqref{defn:surfaceenergy}, \eqref{intProjMu0Phi0'}, and \eqref{intPhi0'Xi0}, we obtain from \eqref{inner:mu:0:integrated} the following solvability condition for $\bm{\Phi}_{1}$:
\begin{align}\label{solvabilitycondition}
\beta \gamma_{ij} \kappa  = \jump{\Proj{\bm{\mu}_{0}} \cdot \bm{\varphi}_{0}}{i}{j} - \jump{N(\bm{\varphi}_{0}, \bm{\sigma}_{0})}{i}{j} + \jump{\bm{\sigma}_{0}}{i}{j} \cdot \bm{N}_{,\bm{\sigma}}(\bm{\varphi}_{0}, \bm{\sigma}_{0}) = 
\jump{p_{0}}{i}{j}.
\end{align}
Next, thanks to the fact that $\pd_{z} \Proj{\bm{\Upsilon}_{0}} = \bm{0}$, to first order we obtain from $\eqref{asym:multi:varphi}_{I}^{-1}$
\begin{equation}\label{inner:varphi:-1}
\begin{aligned}
-\mathcal{V} \bm{\Phi}'_{0} + \pd_{z} ( \bm{\Phi}_{0} (\vec{v}_{0} \cdot \vec{\nu}) ) = \pd_{z} \left ( \left ( \TT{C}(\bm{\Phi}_{0}, \bm{\Sigma}_{0}) \right ) \left [ ( \pd_{z} \Proj{\bm{\Upsilon}_{1}} \otimes \vec{\nu}) + \surf \Proj{\bm{\Upsilon}_{0}} \right ]  \right ) \vec{\nu}.
\end{aligned}
\end{equation}
We note that by the matching condition (\ref{MatchingCond3}) applied to $\Proj{\bm{\Upsilon}_{1}}$, we have $\lim_{z \to \pm \infty} \pd_{z} \Proj{\bm{\Upsilon}_{1}} \otimes \vec{\nu} = \nabla (\Proj{\bm{\mu}_{0}^{\pm}}) \vec{\nu}$, and hence
\begin{align*}
( \pd_{z} \Proj{\bm{\Upsilon}_{1}} \otimes \vec{\nu}) + \surf \Proj{\Upsilon_{0}} \to \begin{cases}
 \nabla \Proj{\bm{\mu}_{0}^{+}} & \text{ for } z \to \infty, \\
 \nabla \Proj{\bm{\mu}_{0}^{-}} & \text{ for } z \to -\infty.
 \end{cases}
\end{align*}
{From} (\ref{inner:div:-1}), $\vec{v}_{0} \cdot \vec{\nu}$ is independent of $z$, so integrating (\ref{inner:varphi:-1}) with respect to $z$ and applying the matching condition (\ref{MatchingCond1}) to $\bm{\Phi}_{0}$ and (\ref{MatchingCond3}) to $\pd_{z} \Proj{\bm{\Upsilon}_{1}}$ gives
\begin{equation}
\label{interfacelaw:varphi:equ}
\begin{aligned}
\left ( -\mathcal{V} + \vec{v}_{0} \cdot \vec{\nu} \right ) \jump{\bm{\varphi}_{0}}{i}{j} & =  \jump{ \left ( \TT{C}(\bm{\varphi}_{0}, \bm{\sigma}_{0}) \right ) \nabla \left ( \Proj{\bm{\mu}_{0}} \right ) }{i}{j}\vec{\nu}.
% & = \left ( \sum_{k,m=1}^{L} \sum_{l,n=1}^{d} \jump{ \left ( \TT{C}(\bm{\varphi}_{0}, \bm{\sigma}_{0}) \right )_{klmn} \pd_{x_{n}} \Proj{\mu_{m,0}} \nu_{l} }{i}{j} \right )_{1 \leq k \leq L}.
\end{aligned}
\end{equation}
Similar, thanks to the fact that $\pd_{z} \bm{\Theta}_{0} = \bm{0}$, we obtain from $\eqref{asym:multi:sigma}_{I}^{-1}$
\begin{equation}
\label{inner:sigma:-1}
\begin{aligned}
-\mathcal{V} \pd_{z} \bm{\Sigma}_{0} + \pd_{z} (\bm{\Sigma}_{0} (\vec{v}_{0} \cdot \vec{\nu})) = \pd_{z} \left ( \left ( \TT{D}(\bm{\Phi}_{0}, \bm{\Sigma}_{0}) \right )  \left [ \left ( \pd_{z} \bm{\Theta}_{1} \otimes \vec{\nu} \right ) + \surf \bm{\Theta}_{0} \right ] \right ) \vec{\nu},
\end{aligned}
\end{equation}
and upon integrating with respect to $z$ we obtain
\begin{align}
\label{interfacelaw:sigma:equ}
\left ( -\mathcal{V} + \vec{v}_{0} \cdot \vec{\nu} \right ) \jump{\bm{\sigma}_{0}}{i}{j} = \jump{ \TT{D}(\bm{\varphi}_{0}, \bm{\sigma}_{0}) \nabla \bm{N}_{,\bm{\sigma}}(\bm{\varphi}_{0}, \bm{\sigma}_{0}) }{i}{j}\vec{\nu}.
\end{align}
In summary, we obtain the following sharp interface model:  In the bulk domains $\Omega_{k} = \{\bm{\varphi}_{0} = \bm{e}_{k}\}$, $1 \leq k \leq L$,
\begin{subequations}\label{Sharpinterface:bulk}
\begin{align}
\div \vec{v}_{0} & = \unit\cdot \bm{U}(\bm{\varphi}_{0}, \bm{\sigma}_{0}), \\
\vec{v}_{0} & = -K \nabla p_{0}, \\
-\div (\TT{C}(\bm{\varphi}_{0}, \bm{\sigma}_{0}) \nabla (\Proj{\bm{\mu}_{0}})) & = \bm{U}(\bm{\varphi}_{0},  \bm{\sigma}_{0}) -  \unit\cdot \bm{U}(\bm{\varphi}_{0}, \bm{\sigma}_{0})\bm{\varphi}_{0}, \label{SI:mu} \\
\pd_{t} \bm{\sigma}_{0} + \div (\bm{\sigma}_{0} \otimes \vec{v}_{0}) & = \div (\TT{D}(\bm{\varphi}_{0}, \bm{\sigma}_{0}) \nabla \bm{N}_{,\bm{\sigma}}(\bm{\varphi}_{0}, \bm{\sigma}_{0})) + \bm{S}(\bm{\varphi}_{0}, \bm{\sigma}_{0}), \label{SI:sigma}
\end{align}
\end{subequations}
and on the free boundaries $\Gamma_{ij} = \pd \Omega_{i} \cap \pd \Omega_{j}$, $1 \leq i < j \leq L$, with unit normal $\vec{\nu}$ pointing from $\Omega_{i}$ to $\Omega_{j}$, 
\begin{subequations}\label{SharpInterface:freebdy}
\begin{align}
\jump{\vec{v}_{0}}{i}{j} \cdot \vec{\nu} = 0, \quad \jump{p_{0}}{i}{j} & = \beta \gamma_{ij} \kappa, \label{jump:p} \\
\jump{\Proj{\bm{\mu}_{0}}}{i}{j} & = \bm{0}, \label{jump:bmmu} \\
\jump{\bm{N}_{,\bm{\sigma}}(\bm{\varphi}_{0}, \bm{\sigma}_{0})}{i}{j} & = \bm{0}, \label{jump:bmNsigma} \\
\beta \gamma_{ij} \kappa & = \jump{\Proj{\bm{\mu}_{0}} \cdot \bm{\varphi}_{0} - N(\bm{\varphi}_{0}, \bm{\sigma}_{0}) + \bm{\sigma}_{0}\cdot \bm{N}_{,\bm{\sigma}}(\bm{\varphi}_{0}, \bm{\sigma}_{0})}{i}{j}, \label{SI:solvabilitycond} \\
\left ( -\mathcal{V} + \vec{v}_{0} \cdot \vec{\nu} \right ) (\bm{e}_{j} - \bm{e}_{i}) & =  \jump{ \TT{C}(\bm{\varphi}_{0}, \bm{\sigma}_{0}) \nabla (\Proj{\bm{\mu}_{0}})}{i}{j}\vec{\nu}, \label{Velo:mu} \\
\left ( -\mathcal{V} + \vec{v}_{0} \cdot \vec{\nu} \right ) \jump{\bm{\sigma}_{0}}{i}{j} & = \jump{ \TT{D}(\bm{\varphi}_{0}, \bm{\sigma}_{0}) \nabla \bm{N}_{,\bm{\sigma}}(\bm{\varphi}_{0}, \bm{\sigma}_{0})  }{i}{j}\vec{\nu}, \label{Velo:sigma}
\end{align}
\end{subequations}
where the surface energy $\gamma_{ij}$ is defined in \eqref{defn:surfaceenergy}.

\subsection{Sharp interface limit for a two-component tumour model}
We now sketch the argument to recover the sharp interface model \cite[Equation (3.49)]{GLSS} from \eqref{Sharpinterface:bulk}-\eqref{SharpInterface:freebdy} for a two-component model of host cells and tumour cells, along with a single nutrient species, recall also Section~\ref{sec:27}. 
Dropping the subscript $0$ from \eqref{Sharpinterface:bulk}-\eqref{SharpInterface:freebdy}, we consider \eqref{eq:sec27} with
\begin{align*}
N(\bm{\varphi}, \sigma) = \frac{\chi_{\sigma}}{2} \abs{\sigma}^{2} + 2 \chi_{\varphi} \sigma \varphi_{1},\quad
\Psi(\bvarphi) = 4 \varphi_{1}^{2} \varphi_{2}^{2} 
\quad\Rightarrow\quad
\tilde{\Psi}(\tilde{\varphi})= \tfrac{1}{4}(1-\tilde{\varphi}^{2})^{2}\,,
\end{align*}
%\begin{align*}
%\tilde{\varphi} = \varphi_{2} - \varphi_{1}, \quad \tilde{\mu} = \frac{1}{2}(\mu_{2} - \mu_{1}), \quad \Psi(\varphi_{1}, \varphi_{2}) = 4 \varphi_{1}^{2} \varphi_{2}^{2} , \\
% \tilde{\Psi}(\tilde{\varphi}) = \Psi ( \tfrac{1}{2}(1-\tilde{\varphi}), \tfrac{1}{2}(1+\tilde{\varphi})) = \tfrac{1}{4}(1-\tilde{\varphi}^{2})^{2}, \quad
%N(\bm{\varphi}, \sigma) = \frac{\chi_{\sigma}}{2} \abs{\sigma}^{2} + 2 \chi_{\varphi} \sigma \varphi_{1},
%\end{align*}
along with the scalar mobility \eqref{eq:Dn}, and 
the second order mobility tensor defined in \eqref{twophasemodel:mobility}.  Setting $\Omega_{H} = \Omega_{1} = \{ \bm{\varphi} = (1,0)^{\top} \} = \{\tilde{\varphi} = -1\}$ and $\Omega_{T} = \Omega_{2} = \{ \bm{\varphi} = (0,1)^{\top} \} = \{ \tilde{\varphi} = 1 \}$, we obtain
\begin{subequations}
\begin{alignat}{3}
\vec{v} = -K \nabla p, \quad \div \vec{v} & = \overline{\rho}_{1}^{-1} \tilde{\mathcal{U}}_{1}(\tilde{\varphi}, \sigma) + \overline{\rho}_{2}^{-1} \tilde{\mathcal{U}}_{2}(\tilde{\varphi}, \sigma) && \text{ in } \Omega_{H} \cup \Omega_{T}, \\
\pd_{t} \sigma + \div (\sigma \vec{v}) & = \chi_{\sigma}\, \div(n(\tilde\varphi) \nabla \sigma) + \tilde{S}(\tilde\varphi,\sigma) && \text{ in } \Omega_{H} \cup \Omega_{T}, \\
m(+1) \Laplace \mu & = 2 \overline{\rho}_{1}^{-1} \tilde{\mathcal{U}}_{1}(+1, \sigma) && \text{ in } \Omega_{T}, \\
-m(-1) \Laplace \mu & = 2 \overline{\rho}_{2}^{-1} \tilde{\mathcal{U}}_{2}(-1, \sigma) && \text{ in } \Omega_{H}.
\end{alignat}
\end{subequations}
%where we use the notation that $f = f^{H}$ in $\Omega_{H}$ and $f = f^{T}$ in $\Omega_{T}$ for $f \in \{ \mathcal{U}_{1}, \mathcal{U}_{2}\}$.

To obtain the free  boundary conditions on $\Gamma = \Gamma_{12}$, let $\overline{\bm{\Phi}}(z)$ denote the solution  to \eqref{inner:mu:-1} that connects $(0,1)^{\top}$ to $(1,0)^{\top}$, and let $\phi = \overline{\bm{\Phi}}_{2} - \overline{\bm{\Phi}}_{1}$ denote the difference between the second and first component of $\overline{\bm{\Phi}}$.  Then, it holds that $\overline{\bm{\Phi}}_{1} = \frac{1 - \phi}{2}$ and $\overline{\bm{\Phi}}_{2} = \frac{1 + \phi}{2}$ with $\lim_{z \to - \infty} \phi(z) = -1 $ and $\lim_{z \to + \infty} \phi(z) = 1$.  We now derive the ODE which is satisfied by $\phi$.  Taking the difference between the second and first component of \eqref{inner:mu:-1} (noting that the last term on the right-hand side of \eqref{inner:mu:-1} will not contribute to this difference), it holds that $\phi$ satisfies
\begin{align*}
-\phi''(z) + \phi^{3}(z) - \phi(z) = - \phi''(z) + \tilde{\Psi}'(\phi(z)) = 0.
\end{align*}
The unique solution $\phi$ to the above ODE with the boundary conditions $\lim_{z \to \pm \infty} \phi(z) = \pm 1$ and satisfying $\phi(0) = 0$ is the function $\phi(z) = \tanh(z/\sqrt{2})$.  With the help of \eqref{defn:surfaceenergy}, we compute the surface energy $\gamma = \gamma_{12}$ to be
\begin{align*}
\gamma = \int_{-\infty}^{\infty} 2 \Psi(\overline{\bm{\Phi}}(z)) \dz = \int_{-\infty}^{\infty} 2 \tilde{\Psi}(\phi(z)) \dz = \int_{-1}^{1} \sqrt{2 \tilde{\Psi}(s)} \ds = \frac{2 \sqrt{2}}{3}.
\end{align*}
We observe that the jump conditions \eqref{jump:bmNsigma} and \eqref{Velo:sigma} become
\begin{align*}
0 = \jump{N_{,\sigma}}{H}{T} \, \Rightarrow \, \jump{\sigma}{H}{T} = 2 \frac{\chi_{\varphi}}{\chi_{\sigma}}, \quad (-\mathcal{V} - \vec{v} \cdot \vec{\nu}) \jump{\sigma}{H}{T} = 2 \frac{\chi_{\varphi}}{\chi_{\sigma}} (-\mathcal{V} + \vec{v} \cdot \vec{\nu}) = \chi_{\sigma} \jump{n(\tilde\varphi) \nabla \sigma}{H}{T} \cdot \vec{\nu},
\end{align*}
respectively, and a short computation shows that
\begin{align*}
\jump{\Proj{\bm{\mu}} \cdot \bm{\varphi}}{H}{T}  = \mu_{2} - \mu_{1} = 2 \tilde{\mu}, \quad \jump{ - N(\bm{\varphi}, \sigma) + \sigma N_{,\sigma}(\bm{\varphi}, \sigma)}{H}{T}  = \frac{\chi_{\sigma}}{2}\jump{\abs{\sigma}^{2}}{H}{T},
\end{align*}
so that \eqref{SI:solvabilitycond} becomes $\beta \gamma \kappa = 2 \tilde{\mu} + \frac{\chi_{\sigma}}{2} \jump{\abs{\sigma}^{2}}{H}{T}$.  Furthermore, taking the difference between the second and first components of the free boundary conditions \eqref{jump:bmmu} and \eqref{Velo:mu}, the free boundary conditions on $\Gamma = \Gamma_{12}$ translates to
\begin{subequations}
\begin{align}
\jump{\vec{v}}{H}{T} \cdot \vec{\nu} = 0, \quad
\jump{\tilde{\mu}}{H}{T} = 0, \quad
\jump{p}{H}{T} = \beta \gamma \kappa, \quad
\jump{\sigma}{H}{T} = 2\frac{\chi_{\varphi}}{\chi_{\sigma}}, \quad \beta \gamma \kappa  & = 2 \tilde{\mu} + \frac{\chi_{\sigma}}{2} \jump{\abs{\sigma}^{2}}{H}{T}, \\
2(-\mathcal{V} + \vec{v} \cdot \vec{\nu}) = \jump{m(\tilde\varphi) \nabla \mu}{H}{T} \cdot \vec{\nu}, \quad (-\mathcal{V} + \vec{v} \cdot \vec{\nu}) \jump{\sigma}{H}{T} &  = \chi_{\sigma} \jump{n(\tilde\varphi) \nabla \sigma}{H}{T} \cdot \vec{\nu}.
\end{align}
\end{subequations}
The resulting sharp interface model coincides with \cite[Equation (3.49)]{GLSS}.

\subsection{Sharp interface limit for \eqref{Intro:simplify:3compo}}\label{sec:SIM:3component}
In this section, we derive the sharp interface limit of \eqref{Intro:simplify:3compo}, so that $L = 3$ and $M = 1$, from the general sharp interface model \eqref{Sharpinterface:bulk}-\eqref{SharpInterface:freebdy}, where we again drop the subscript $0$. 
Choosing \eqref{Intro:nutrient:eg} with $\chi_{\sigma} = 1$, $\chi_{\varphi} = \chi_{n} = 0$ and \eqref{MobilityChoice:Symmetric} with 
$m_{i}(s) = 1$, $1 \leq i \leq 3$, we have
\begin{align*}
N(\bm{\varphi}, \sigma) = \frac{1}{2} \abs{\sigma}^{2}, \quad \TT{C}(\bvarphi) = \left(\delta_{ij} - \frac{1}{3}\right)_{i,j=1}^3, \quad S(\bvarphi,\sigma) = - \mathcal{C} \varphi_{2} \sigma,
\end{align*}
so that $N_{,\sigma}(\bm{\varphi}, \sigma) = \sigma$ and $\bm N_{,\bm\varphi}(\bm{\varphi}, \sigma) = \bm 0$. Moreover, defining $\Omega_{H} = \{ \bm{\varphi} = \bm{e}_{1} \}$, $\Omega_{P} = \{ \bm{\varphi} = \bm{e}_{2}\}$ and $\Omega_{N} = \{ \bm{\varphi} = \bm{e}_{3}\}$, with interfaces $\Gamma_{HP} = \pd \Omega_{H} \cap \pd \Omega_{P}$, $\Gamma_{PN} = \pd \Omega_{P} \cap \pd \Omega_{N}$, we assume that $\pd \Omega_{H} \cap \pd \Omega_{N} = \emptyset$.  One can compute that $\TT{C}(\bvarphi) \nabla \Proj{\bm{\mu}} = \TT{C}(\bvarphi) \nabla \bm{\mu}$, and thus upon setting
\begin{align*}
y = \frac{1}{3} \left ( 2 \mu_{1} - \mu_{2} - \mu_{3} \right ), \quad z= \frac{1}{3} \left (-\mu_{1} + 2 \mu_{2} - \mu_{3} \right ),
\end{align*}
from \eqref{SI:mu} and \eqref{SI:sigma} (recalling that $\sigma$ evolves quasi-statically) we obtain the following outer equations:
\begin{align*}
\Laplace \sigma = \begin{cases}
0 & \text{ in } \Omega_{H} \cup \Omega_{N}, \\
\mathcal{C} \sigma & \text{ in } \Omega_{P}.
\end{cases}, \quad \begin{cases}
- \Laplace y = U_{1}(\bm{\varphi}, \sigma) - (\unit\cdot \bm{U}(\bm{\varphi}, \sigma)) \varphi_{1}, \\
- \Laplace z = U_{2}(\bm{\varphi}, \sigma) - (\unit\cdot \bm{U}(\bm{\varphi}, \sigma)) \varphi_{2}, \\
-K \Laplace p = (\unit\cdot\bm{U}(\bm{\varphi}, \sigma)), 
\end{cases} \text{ in } \Omega_{H} \cup \Omega_{P} \cup \Omega_{N}.
\end{align*}
Meanwhile, due to the fact that $N_{,\sigma}(\bm{\varphi}, \sigma) = \sigma$ and $\Proj{\bm{\mu}} = (y, z, -(y+z))^{\top}$, we obtain from \eqref{jump:p},  \eqref{jump:bmmu}, \eqref{jump:bmNsigma} and \eqref{Velo:sigma} that
\begin{align*}
[\nabla p] \cdot \vec{\nu} = [y] = [z] = [\sigma] = [\nabla \sigma] \cdot \vec{\nu} = 0 \text{ on } \Gamma_{PN} \cup \Gamma_{HP}.
\end{align*}
Furthermore, \eqref{SI:solvabilitycond} and \eqref{Velo:mu} simplify to
\begin{align*}
\jump{p}{N}{P} = \beta \gamma_{PN} \kappa  = \Proj{\mu_{2}} - \Proj{\mu_{3}} = 2 y - z & \text{ on } \Gamma_{PN}, \\
\jump{p}{P}{H} = \beta \gamma_{HP} \kappa = \Proj{\mu_{1}} - \Proj{\mu_{2}} = y - z & \text{ on } \Gamma_{HP}, \\
-\mathcal{V} + K \nabla p \cdot \vec{\nu} = \jump{\nabla z}{N}{P} \cdot \vec{\nu} , \quad 0 = \jump{\nabla y}{N}{P} \cdot \vec{\nu} & \text{ on } \Gamma_{PN}, \\
- \mathcal{V} + K \nabla p \cdot \vec{\nu} = \jump{\nabla y}{P}{H} \cdot \vec{\nu}, \quad \mathcal{V} - K \nabla p \cdot \vec{\nu} = \jump{\nabla z}{P}{H} \cdot \vec{\nu} & \text{ on } \Gamma_{HP}.
\end{align*}

In the case where source terms of the form \eqref{Intro:source:2} are considered, the asymptotic analysis requires a slight modification, which we will briefly sketch below.  The multi-component system we study is given by
\begin{subequations}\label{Kampmann}
\begin{alignat}{3}
\div \vec{v} & = \eps^{-1}(\unit\cdot \bm{U}(\bm{\varphi}, \sigma)), \label{Kampmann:div} \\
\vec{v} & = - K \nabla p + K (\nabla \bm{\varphi})^{\top} \bm{\mu}, \label{Kampmann:Darcy} \\
\pd_{t} \bm{\varphi} + (\nabla \bm{\varphi}) \vec{v} & = \div (\TT{C}(\bm{\varphi},\sigma) \nabla \Proj{\bm{\mu}}) + \eps^{-1} \left ( \bm{U}(\bm{\varphi}, \sigma) - (\unit\cdot \bm{U}(\bm{\varphi}, \sigma) \bm{\varphi} ) \right ), \label{Kampmann:varphi} \\
\Proj{\bm{\mu}} & = - \beta \eps \Laplace \bm{\varphi} + \beta \eps^{-1} \Proj{\Psi_{,\bm{\varphi}}}(\bvarphi), \label{Kampmann:mu} \\
0 & = \Laplace \sigma + S(\bm{\varphi}, \sigma), \label{Kampmann:sigma}
\end{alignat}
\end{subequations}
where we now consider
\begin{align*}
\bm{U}(\bm{\varphi}, \sigma) = (\overline{\rho}_{1}^{-1} \mathcal{U}_{1}(\varphi_1, \sigma), \dots, \overline{\rho}_{L}^{-1} \mathcal{U}_{L}(\varphi_L, \sigma))^{\top}, \quad \mathcal{U}_{k}(\varphi_k, \sigma) = \varphi_{k}^{2} (1-\varphi_{k})^{2} F_{k}(\sigma)
\end{align*}
with scalar functions $F_{k}$, $1 \leq k \leq L$.  
For example, we may choose $L=3$, $\overline{\rho}_{i} = 1$ for $1 \leq i \leq 3$, and $F_{1}(s) = 0$, $F_{2}(s) = \mathcal{P} s - \mathcal{A}$, $F_{3}(s) = \mathcal{A} - D_{N}$ in order to match with \eqref{Intro:source:2}. 

In the outer expansions, we obtain \eqref{outer:varphi:-1} from $\eqref{Kampmann:mu}_{O}^{-1}$, which implies that $\bm{\varphi}_{0} = \bm{e}_{i}$, $1 \leq i \leq L$.  Then noting that the source term $\eps^{-1}\left ( \bm{U}(\bm{\varphi}, \sigma) - (\unit\cdot \bm{U}(\bm{\varphi}, \sigma))\bm{\varphi} \right )$ will not contribute to leading and first order, we obtain from the zeroth order expansions of \eqref{Kampmann:div}-\eqref{Kampmann:varphi}, \eqref{Kampmann:sigma} the outer equations
\begin{align*}
\div \vec{v}_{0} = 0, \quad \vec{v}_{0} = - K \nabla p_{0}, \quad  \div (\TT{C}(\bm{\varphi}_{0},\sigma_0) \nabla (\Proj{\bm{\mu}_{0}})) = \bm{0}, \quad \Laplace \sigma_{0} = - S(\bm{\varphi}_{0}, \sigma_{0}).
\end{align*}
For the free boundary conditions on the interface $\Gamma_{ij}$, using the inner expansions, we recover \eqref{inner:mu:constant} from $\eqref{Kampmann:varphi}_{I}^{-2}$ and from $\eqref{Kampmann:mu}_{I}^{-1}$ we have similarly that $\bm{\Phi}_{0}$ is a function only in $z$ connecting $\bm{e}_{i}$ to $\bm{e}_{j}$.  While the rest of the analysis is analogous, the only difference lies in $\eqref{Kampmann:div}_{I}^{-1}$ and $\eqref{Kampmann:varphi}_{I}^{-1}$ where now the source term enters.  More precisely, from $\eqref{Kampmann:div}_{I}^{-1}$ and $\eqref{Kampmann:varphi}_{I}^{-1}$ we have
\begin{subequations}
\begin{alignat}{3}
\pd_{z} \vec{V}_{0} \cdot \vec{\nu} & = \unit\cdot \bm{U}(\bm{\Phi}_{0}, \Sigma_{0}), \label{Kampmann:inner:div} \\
(-\mathcal{V} + \vec{V}_{0} \cdot \vec{\nu})\bm{\Phi}_{0}' & = \pd_{z} \left ( \left ( \TT{C}(\bm{\Phi}_{0},\Sigma_{0}) \right ) [(\pd_{z} \Proj{\bm{\Upsilon}_{1}} \otimes \vec{\nu}) + \surf \Proj{\bm{\Upsilon}_{0}} ] \right ) \vec{\nu} \label{Kampmann:inner:velo} \\
\notag & \quad + \bm{U}(\bm{\Phi}_{0}, \Sigma_{0}) - (\unit\cdot \bm{U}(\bm{\Phi}_{0}, \Sigma_{0})) \bm{\Phi}_{0}.
\end{alignat}
\end{subequations}
Using the fact that $\pd_{z} \Sigma_{0} = 0$ (from $\eqref{Kampmann:sigma}_{I}^{-2}$) and introducing the notation $\Phi_{0,k}$ as the $k$th component of the vector $\bm{\Phi}_{0}$, we define for $1 \leq k, l \leq L$,
\begin{align*}
\bm{\delta} & = (\delta_{1}, \dots, \delta_{L})^{\top}, \quad \delta_{k} = \int_{\R} (\Phi_{0,k}(z))^{2}(1 - \Phi_{0,k}(z))^{2} \dz, \\
\TT{G} & \in \R^{L \times L}, \quad \TT{G}_{kl} = \int_{\R} (\Phi_{0,l}(z))^{2} (1- \Phi_{0,l}(z))^{2} \Phi_{0,k}(z) \dz, \\
\bm{F}(\sigma) & = ( \overline{\rho}_{1}^{-1} F_{1}(\sigma), \dots, \overline{\rho}_{L}^{-1} F_{L}(\sigma))^{\top}, \quad \bm{H}(\sigma) = (\delta_{1} F_{1}(\sigma), \dots, \delta_{L} F_{L}(\sigma))^{\top}.
\end{align*}
Then, integrating \eqref{Kampmann:inner:div} and \eqref{Kampmann:inner:velo} in $z$ leads to
\begin{align*}
\jump{\vec{v}_{0}}{i}{j} \cdot \vec{\nu} = \unit\cdot \bm{H}(\sigma_{0}), \quad -\mathcal{V} (\bm{e}_{j} - \bm{e}_{i}) + \jump{(\vec{v}_{0} \cdot \vec{\nu}) \bm{\varphi}_{0}}{i}{j} = \jump{\TT{C}(\bm{\varphi}_{0},\sigma_0) \nabla (\Proj{\bm{\mu}_{0}})}{i}{j} \vec{\nu} + \bm{H}(\sigma_{0}),
\end{align*}
where we used that
\begin{align*}
\int_{\R} (\vec{V}_{0} \cdot \vec{\nu}) \bm{\Phi}_{0}' \dz = \jump{(\vec{v}_{0} \cdot \vec{\nu}) \bm{\varphi}_{0}}{i}{j} - \int_{\R} (\unit\cdot \bm{U}(\bm{\Phi}_{0}, \Sigma_{0}))\bm{\Phi}_{0} \dz = \jump{(\vec{v}_{0} \cdot \vec{\nu}) \bm{\varphi}_{0}}{i}{j}  - \TT{G} \bm{F}(\sigma).
\end{align*}
Hence, the sharp interface limit of \eqref{Kampmann} is
\begin{align*}
-K \Laplace p_{0} = 0, \quad \div (\TT{C}(\bm{\varphi}_{0},\sigma_0) \nabla(\Proj{\bm{\mu}_{0}})) = \bm{0}, \quad -\Laplace \sigma_{0} = S(\bm{\varphi}_{0}, \sigma_{0}) & \text{ in } \Omega_{k}, \\
\jump{\Proj{\bm{\mu}_{0}}}{i}{j} = \bm{0}, \quad \jump{\sigma_{0}}{i}{j} = 0, \quad \jump{\nabla \sigma_{0}}{i}{j} \cdot \vec{\nu} = 0, \quad \beta \gamma_{ij} \kappa = \jump{\Proj{\bm{\mu}_{0}} \cdot \bm{\varphi}_{0}}{i}{j} = \jump{p_{0}}{i}{j} & \text{ on } \Gamma_{ij}, \\
\jump{\vec{v}_{0}}{i}{j} \cdot \vec{\nu} = \unit\cdot\bm{H}(\sigma_{0}), \quad -\mathcal{V} (\bm{e}_{j} - \bm{e}_{i}) + \jump{(\vec{v}_{0} \cdot \vec{\nu}) \bm{\varphi}_{0}}{i}{j} = \jump{\TT{C}(\bm{\varphi}_{0},\sigma_0) \nabla (\Proj{\bm{\mu}_{0}})}{i}{j} \vec{\nu} + \bm{H}(\sigma_{0}) & \text{ on } \Gamma_{ij},
\end{align*}
for $1 \leq k \leq L$ and $1 \leq i < j \leq L$.

\begin{remark}\label{rem:Obstacle:sourceterm}
In our numerical investigations below, we will use an obstacle potential \eqref{Obstacle:Pot}, and the asymptotic analysis for the obstacle potential will yield that the outer expansions $\bm{\varphi}_{i}$ for $i \geq 1$ are all zero.  Hence, it is sufficient to consider source terms of the form \eqref{Intro:source:2} with the prefactor $\varphi_{i}(1-\varphi_{i})$ instead of $\varphi_{i}^{2}(1-\varphi_{i})^{2}$, which will also lead to the same outer equations for the sharp interface limit, but in general, the prefactors $\delta_{k}$ will be different.
\end{remark}

\subsection{Sharp interface limit of a model with degenerate Ginzburg--Landau energy}\label{sec:SIM:degGL}
In this section, we study a particular three-component model consisting of host cells $(\varphi_{1})$, proliferating cells $(\varphi_{2})$ and necrotic cells $(\varphi_{3})$ along with a quasi-static nutrient $(\sigma)$, which is derived from a degenerate Ginzburg--Landau energy, similar to the discussions in Section \ref{sec:DegGL}.  In particular, we have $L = 3$ and $M = 1$.  We consider a total energy of the form
\begin{align}\label{deg:energy}
\mathcal{E}(\bvarphi,\sigma)= 
\int_\Omega
\frac{\beta \eps}{2} \abs{\nabla \varphi_{2}}^{2}
+ \frac{\beta}{\eps} W(\varphi_{2})  + \frac{\chi_{\sigma}}{2} \abs{\sigma}^{2} - \chi_{\varphi} \sigma \varphi_{2} \dx,
\end{align}
where $W  : \R \to \R$ is a potential with minima at $0$ and $1$.  In the context of cellular adhesion, we assume that the host cells and necrotic cells prefer to adhere to each other rather than to the proliferating cells.  Let us consider the bare mobilities $m_{1}(s) = 1-s$, $m_{2}(s) = s$ and $m_{3}(s) = 1-s$, and the second order tensor $\TT{C}(\bm{\varphi}) \in \R^{3 \times 3}$ with entries $\TT{C}_{ij}(\bm{\varphi}) = \frac{3}{2} m_{i}(\varphi_{i}) \left (\delta_{ij} - m_{j}(\varphi_{j}) \left ( \sum_{k=1}^3 m_{k}(\varphi_{k}) \right )^{-1} \right )$, $1 \leq i, j \leq 3$.  Then, for a vector of source terms $\bm{U}$ such that $\unit\cdot \bm{U} = 0$, the model \eqref{model:zeroexcesstotalamass} becomes
\begin{subequations}
\begin{align}
\pd_{t} \bm{\varphi} & = \div (\TT{C}(\bm{\varphi}) \nabla (\Proj{\bm{\mu}}) ) + \bm{U}(\bm{\varphi}, \sigma), \\
\mu_{2} & = - \beta \eps \Laplace \varphi_{2} + \beta \eps^{-1} W'(\varphi_{2}) - \chi_{\varphi} \sigma, \quad \mu_{1} = \mu_{3} = 0, \\
\pd_{t}\sigma & = \div (n(\bm{\varphi},\sigma) \nabla (\sigma - \lambda \varphi_{2})) + S(\bm{\varphi}, \sigma),
\end{align}
\end{subequations}
where $n(\bm{\varphi}, \sigma) = \TT{D}(\bm{\varphi}, \sigma) \chi_{\sigma}$, $\lambda = \chi_{\varphi}/\chi_{\sigma}$, and $\mu_{1} = \mu_{3} = 0$ precisely due to the fact that the energy \eqref{deg:energy} does not depend on $\varphi_{1}$ and $\varphi_{3}$.  Sending $\lambda \to 0$, and neglecting the left-hand side, as the nutrient evolves quasi-statically, and then considering a constant mobility $n(\bvarphi,\sigma) = 1$, leads to the phase field model
\begin{subequations}
\begin{align}
\pd_{t}\varphi_{2} & = \div \left ( \TT{C}_{22}(\bm{\varphi}) \nabla \mu_{2} \right ) + \overline{\rho}_{2}^{-1} \mathcal{U}_{2}(\bm{\varphi}, \sigma), \label{SIM:degGL:2} \\
\pd_{t} \varphi_{i} & = \div \left ( \TT{C}_{i2}(\bm{\varphi}) \nabla \mu_{2} \right ) + \overline{\rho}_{i}^{-1} \mathcal{U}_{i}(\bm{\varphi}, \sigma), \quad i = 1,3, \label{SIM:degGL:1:3} \\
\mu_{2} & = -\beta \eps \Laplace \varphi_{2} + \beta \eps^{-1} W'(\varphi_{2}) - \chi_{\varphi} \sigma, \\
0 & = \Laplace \sigma + S(\bm{\varphi}, \sigma).
\end{align}
\end{subequations}
Note that by the relations $\unit \cdot \bm{U} = \overline{\rho}_{1}^{-1} \mathcal{U}_{1} + \overline{\rho}_{2}^{-1} \mathcal{U}_{2} + \overline{\rho}_{3}^{-1} \mathcal{U}_{3} = 0$, $\varphi_{2} = 1 - \varphi_{1} - \varphi_{3}$, and $\TT{C}_{22}(\bm{\varphi}) = -\TT{C}_{12}(\bm{\varphi}) - \TT{C}_{32}(\bm{\varphi})$, equation \eqref{SIM:degGL:2} can be written as
\begin{align*}
-\pd_{t}(\varphi_{1} + \varphi_{3}) = - \div ((\TT{C}_{12} + \TT{C}_{32})(\bm{\varphi}) \nabla \mu_{2}) - (\overline{\rho}_{1}^{-1} \mathcal{U}_{1} + \overline{\rho}_{3}^{-1} \mathcal{U}_{3})(\bm{\varphi}, \sigma),
\end{align*}
which is the negative of the sum of \eqref{SIM:degGL:1:3}.  As the minima of $W$ are $0$ and $1$, we have that the leading order term $\varphi_{2,0} = 0$ or $1$, which allows us to define the regions $\Omega_{P} = \{ \varphi_{2,0} = 1\} = \{ \varphi_{1,0} + \varphi_{3,0} = 0 \}$ and $\Omega_{P}^{c} = \{ \varphi_{2,0} = 0 \} = \{ \varphi_{1,0} + \varphi_{1,3} = 1\}$.  Then, the following outer equations are derived:
\begin{align*}
0 = \Laplace \sigma_{0} + S(\bm{\varphi}_{0}, \sigma_{0})  & \text{ in } \Omega_{P} \cup \Omega_{P}^{c}, \\
-\Laplace \mu_{2,0} = \overline{\rho}_{2}^{-1} \mathcal{U}_{2}(\bm{\varphi}_{0} , \sigma_{0})  = - (\overline{\rho}_{1}^{-1} \mathcal{U}_{1} + \overline{\rho}_{3}^{-1} \mathcal{U}_{3})(\bm{\varphi}_{0}, \sigma_{0}) & \text{ in } \Omega_{P}, \\
0=
\overline{\rho}_{2}^{-1} \mathcal{U}_{2}(\bm{\varphi}_{0}, \sigma_{0})  = -(\overline{\rho}_{1}^{-1} \mathcal{U}_{1} + \overline{\rho}_{3}^{-1} \mathcal{U}_{3})(\bm{\varphi}_{0}, \sigma_{0}) & \text{ in } \Omega_{P}^{c}.
\end{align*}
Here we used that $\TT{C}_{22}(\bm{\varphi})$ is $1$ in $\Omega_{P}$ and $0$ in $\Omega_{P}^{c}$, while $\TT{C}_{12}(\bm{\varphi})$, $\TT{C}_{32}(\bm{\varphi})$ are equal to $-\frac{1}{2}$ in $\Omega_{P}$ and $0$ in $\Omega_{P}^{c}$.  Let $\Gamma = \pd \Omega_{P}$ denote the interface that is moving with normal velocity $\mathcal{V}$, and let $\vec{\nu}$ and $\kappa$ denote the outward unit normal and mean curvature of $\Gamma$, respectively.  Then, we obtain from the inner expansions the following set of equations
\begin{align*}
[\sigma_{0}] = 0, \quad [\nabla \sigma_{0}] \cdot \vec{\nu} = 0, \quad \mu_{2,0} = \beta \gamma \kappa - \chi_{\varphi} \sigma_{0}, \quad - \mathcal{V} = \nabla \mu_{2,0} \cdot \vec{\nu} \text{ on } \Gamma,
\end{align*}
where $\gamma$ is a positive constant defined by $\gamma = \int_{0}^{1} \sqrt{2 W(s)} \ds$.  In the case of equal densities $\overline{\rho}_{1} = \overline{\rho}_{2} = \overline{\rho}_{3} = 1$, we now choose the source terms to be
\begin{align*}
S(\bm{\varphi}, \sigma) = -\sigma, \quad  \mathcal{U}_{2}(\bm{\varphi}, \sigma) = \mathcal{P} \sigma \varphi_{2}, \quad  \mathcal{U}_{1}(\bm{\varphi}, \sigma) =  \mathcal{U}_{3}(\bm{\varphi}, \sigma) = - \frac{1}{2} \mathcal{P} \sigma \varphi_{2},
\end{align*}
for a positive constant $\mathcal{P}$, and define $p = \mu_{2,0} + \chi_{\varphi} \sigma_{0}$, so that we have
\begin{align*}
\Laplace \sigma_{0} = \sigma_{0}, \quad \Laplace p = \left ( \chi_{\varphi} - \mathcal{P} \right ) \sigma_{0} \text{ in } \Omega_{P}, \quad \Laplace \sigma_{0} = \sigma_{0} \text{ in } \Omega_{P}^{c}, \\
[\sigma_{0}] = 0, \quad [\nabla \sigma_{0}] \cdot \vec{\nu} = 0, \quad p = \beta \gamma \kappa, \quad -\mathcal{V} = \nabla p \cdot \vec{\nu} - \chi_{\varphi} \nabla \sigma_{0} \cdot \vec{\nu} \text{ on } \Gamma,
\end{align*}
which bears some similarities to the free boundary models studied in \cite{CLLW,CLNie,EscherMM,PFCL}.

\section{Numerical approximation}\label{sec:numerics}
In this section we propose a finite element approximation for the 
three-component model \eqref{Intro:3component} and present several numerical 
simulations for it. 
In particular, we have $L = 3$ and $M = 1$ and consider the obstacle potential \eqref{Obstacle:Pot}.  For the mobility tensor $\TT{C}(\bm{\varphi})$ we choose
\eqref{MobilityChoice:Symmetric} with
\begin{equation} \label{eq:ms}
m_{1}(s) = 1 - s + \delta_{C}, \quad m_{2}(s) = s + \delta_{C}, \quad m_{3}(s) = s + \delta_{C},
\end{equation}
where $\delta_{C} = 10^{-6}$ is a regularisation parameter.
Moreover, we consider \eqref{Intro:nutrient:eg} with
$\chi_{\sigma} ,\chi_{\varphi} > 0$ and $\chi_{n} = 0$, so that
\begin{equation}\label{eq:Xi}
\bm{N}_{,\bm{\varphi}}(\sigma) = \left( 0, -\chi_{\varphi} \sigma, 0 \right)^{\top}.
\end{equation}

In order to allow for the case $K=0$, which means that we set the velocity 
to zero, we define
\begin{equation}\label{eq:UK}
\bm{\widehat U}(\bm{\varphi}, \sigma) = \begin{cases}
\bm{U}(\bm{\varphi}, \sigma) & \text{ for } K > 0,\\
\bm{U}(\bm{\varphi}, \sigma) - (\unit\cdot \bm {U}(\bm{\varphi}, \sigma)) \bm{\varphi} & \text{ for } K = 0.
\end{cases}
\end{equation}
Recalling \eqref{Intro:source:sigma}-\eqref{Intro:source:2}, we consider 
\begin{subequations}
\begin{align} 
S(\bm{\varphi}, \sigma) & = -\mathcal{C} \sigma \varphi_{2}, \label{eq:S} \\
\bm{U}_{A}(\bm{\varphi}, \sigma) & = 
(0, \varphi_{2} (\mathcal{P} \sigma - \mathcal{A}),
\mathcal{A}\varphi_{2} - D_{N} \varphi_{3} )^{\top}, \label{eq:U} \\
\bm{U}_{B}(\bm{\varphi}, \sigma) & = (-\varphi_{2}\mathcal{P} \sigma, \varphi_{2} (\mathcal{P} \sigma - \mathcal{A}), \mathcal{A} \varphi_{2} - D_{N}\varphi_{3} )^{\top}, \label{eq:Unew2} \\
\bm{U}_{C}(\bm{\varphi}, \sigma) & = (0, \eps^{-1}\varphi_{2} (1-\varphi_{2}) (\mathcal{P} \sigma - 
\mathcal{A}), \eps^{-1} \varphi_{3} (1-\varphi_{3})(\mathcal{A} - D_{N}))^{\top}. \label{eq:Unew}
\end{align}
\end{subequations}
Here we note that \eqref{eq:Unew} differs from \eqref{Intro:source:2}.  In particular, we observe that for \eqref{eq:Unew} the function $F$ in 
\eqref{3phase:source:2} is chosen as $F(s) = s\,(1-s)$, rather than $F(s) = s^{2}\,(1-s)^{2}$ as for \eqref{Intro:source:2}, which clearly does not
satisfy the conditions stated below \eqref{3phase:source:2}.  However, we 
remark that the asymptotic analysis remains valid, see Remark \ref{rem:Obstacle:sourceterm}.

\subsection{Finite element approximation}

Let $\mathcal{T}_{h}$ be a regular triangulation of $\Omega$ into disjoint open simplices.  Associated with $\mathcal{T}_h$ is the piecewise linear finite element space
\begin{align*}
S_{h} =  \left \{ \chi \in C^{0}(\overline\Omega) \Big| \,  \chi_{|_{o}} \in P_{1}(o) \ \forall o \in \mathcal{T}_{h} \right \} \subset H^{1}(\Omega),
\end{align*}
where we denote by $P_{1}(o)$ the set of all affine linear functions on $o$.  Let $\bm{S}_{h} = [S_{h}]^{L} = S_{h} \times \dots \times S_{h}$, and define
\begin{equation*}
\bm{S}_{h}^{+}= \{ \bm{\chi} \in \bm{S}^{h} :  \bm{\chi} \geq \bm{0}\}.
\end{equation*}
Similarly to \cite{Nurnberg09}, see also \cite{BarrettBG01}, we consider the splitting
\begin{equation*}\label{eq:A+-}
\TT{\mathcal{W}} \equiv \TT{\mathcal{W}}^{+} + \TT{\mathcal{W}}^{-}, \quad \text{ where } \TT{\mathcal{W}}^{+(-)} \text{ is symmetric and positive (negative) 
semi-definite},
\end{equation*}
recall \eqref{Obstacle:Pot}.  Throughout we choose 
$\TT{\mathcal{W}} =  \id - \unit \otimes \unit$, and let 
$\TT{\mathcal{W}}^- = -\frac{2}{3} \unit \otimes \unit$.  We now introduce a finite element approximation of the above described model, 
in which we have taken homogeneous Neumann boundary conditions for $\bm{\varphi}$ and $\bm{\mu}$, and the Dirichlet boundary condition $\sigma = \sigma_{B} \in \R$ on $\pd \Omega$.  To this end, let $S_{h}^{B} = \{ \chi \in S_{h} |~\chi = \sigma_{B} \text{ on } \pd \Omega \}$, as well as
$S_{h}^{0} = \{ \chi \in S_{h} |~\chi = 0 \text{ on } \pd \Omega\}$.  The numerical scheme is defined as follows:  Find
\begin{align*}
(\bm{\varphi}_{h}^{n}, \bm{\mu}_{h}^{n}, \sigma_{h}^{n}, p_{h}^{n}) \in \bm{S}_{h}^{+} \times \bm{S}_{h} \times S_{h}^{B} \times S_{h}^{0}
\end{align*}
such that
\begin{subequations}
\begin{align}
& \frac{1}{\tau} (\bm{\varphi}_{h}^{n} - \bm{\varphi}_{h}^{n-1}, \bm{\eta}_{h})_{h} + \left( \TT{C}(\bm{\varphi}_{h}^{n-1}) \, \nabla \bm{\mu}_{h}^{n}, \nabla \bm{\eta}_{h} \right)_{h} \nonumber \\ & \hspace{1cm}
  = (\bm{\widehat U}(\bm{\varphi}_{h}^{n-1}, \sigma_{h}^{n-1}), \bm{\eta}_{h})_{h} -([\unit\cdot \bm{\widehat U}(\bm{\varphi}_{h}^{n-1}, \sigma_{h}^{n-1})]\,\bm{\varphi}_{h}^{n-1}, \bm{\eta}_{h})_{h}
\nonumber \\ & \hspace{1cm} \qquad
+ K \left( (\nabla \bm{\varphi}_{h}^{n-1}) ( \nabla\,p_{h}^{n-1} - (\nabla \bm{\varphi}_{h}^{n-1})^{\top} (\bm{\mu}_{h}^{n-1} - \bm{N}_{,\bm{\varphi}}(\sigma_{h}^{n-1}))), \bm{\eta}_{h}\right)_{h}, \label{eq:FEAc} \\
& \beta \eps (\nabla \bm{\varphi}_{h}^{n}, \nabla (\bm{\zeta}_{h} - \bm{\varphi}_{h}^{n}))  - (\beta\eps^{-1} \TT{\mathcal{W}}^{-} \bm{\varphi}_{h}^{n} + \bm{\mu}_{h}^{n}, \bm{\zeta}_{h} - \bm{\varphi}_{h}^{n})_{h} \nonumber \\ 
& \hspace{4cm} \geq \beta \eps^{-1} (\TT{\mathcal{W}}^{+} \bm{\varphi}_{h}^{n-1}, \bm{\zeta}_{h} - \bm{\varphi}_{h}^{n})_{h} - (\bm{N}_{,\bm{\varphi}}(\sigma_{h}^{n-1}), \bm{\zeta}_{h} - \bm{\varphi}_{h}^{n})_{h}, \label{eq:FEAd} \\
& \frac{1}{\tau} (\sigma_{h}^{n} - \sigma_{h}^{n-1}, \chi_{h})_{h} 
- K \left( \nabla \sigma_{h}^{n-1} \cdot 
 ( \nabla p_{h}^{n-1} - ( \nabla \bm{\varphi}_{h}^{n})^{\top} (\bm{\mu}_{h}^{n} - \bm{N}_{,\bm{\varphi}}(\sigma_{h}^{n-1}))), \chi_{h} \right)_{h} \nonumber \\ 
 & \hspace{2cm} + ([\unit\cdot\bm{\widehat U}(\bm{\varphi}_{h}^{n-1}, \sigma_{h}^{n-1})] \sigma_{h}^{n-1}, \chi_{h})_{h} + D(\nabla \sigma_{h}^{n}, \nabla \chi_{h}) - \lambda (\nabla \varphi_{2,h}^{n}, \nabla \chi_{h}) \nonumber \\ 
 & \hspace{1cm} = - \mathcal{C} (\sigma_{h}^{n}  \varphi_{2,h}^{n} , \chi_{h})_{h} 
\label{eq:FEAe} \\
& ( \nabla p_{h}^{n}, \nabla \chi_{h}) = ((\nabla \bm{\varphi}_{h}^{n})^{\top} (\bm{\mu}_{h}^{n} -
\bm{N}_{,\bm{\varphi}}(\sigma_{h}^{n})), \nabla \chi_{h})_{h} + \frac{1}{K} (\unit\cdot\bm{\widehat U}(\bm{\varphi}_{h}^{n}, \sigma_{h}^{n}), \chi_{h})_{h},\label{eq:FEAp} 
\end{align}
\end{subequations}
holds for all $(\bm{\zeta}_{h}, \bm{\eta}_{h}, \chi_{h}) \in \bm{S}_{h}^{+} \times \bm{S}_{h} \times S_{h}^{0}$, 
where $\tau$ denotes the time step size, 
$(\cdot,\cdot)$ denotes the $L^{2}$--inner product on $\Omega$,
$(\cdot,\cdot)_{h}$ is the usual mass lumped $L^{2}$--inner product on
$\Omega$, and $\lambda = \frac{\chi_{\varphi}}{\chi_{\sigma}}$, recall
(\ref{eq:lambda}).
In the case $K = 0$ we simply neglect \eqref{eq:FEAp} and do not compute for $p_{h}^{n}$.  A quasi-static variant of the discrete nutrient equation \eqref{eq:FEAe} is given by
\begin{equation} 
 D(\nabla \sigma_{h}^{n}, \nabla \chi_{h})
- \lambda (\nabla \varphi_{2,h}^{n}, \nabla \chi_{h}) = - \mathcal{C} (\sigma_{h}^{n}  \varphi_{2,h}^{n},  \chi_{h})_{h}. 
\label{eq:simpleFEAc}
\end{equation}

We implemented the scheme \eqref{eq:FEAc}-\eqref{eq:FEAp} with the help of the finite element toolbox ALBERTA, see \cite{Alberta}.  To increase computational efficiency, we employ adaptive meshes, which have a finer mesh size $h_{f}$ within the diffuse interfacial regions and a coarser mesh size $h_{c}$ away from them, see \cite{Nurnberg09} for a more detailed description.  Clearly, the system \eqref{eq:FEAc}-\eqref{eq:FEAp} decouples, and so we first solve the variational inequality \eqref{eq:FEAc}-\eqref{eq:FEAd} for $(\bm{\varphi}_{h}^{n}, \bm{\mu}_{h}^{n})$ with 
the projected block Gauss--Seidel algorithm from \cite{Nurnberg09}.  Then we compute $\sigma_{h}^{n}$ from \eqref{eq:FEAe}, or from \eqref{eq:simpleFEAc}, and finally $p_{h}^{n}$ from \eqref{eq:FEAp}, where we employ the direct linear solver UMFPACK, see \cite{Davis04}.  Finally, to increase the efficiency of the numerical computations in this paper, we exploit the symmetry of the problem and performed all computations
only on a quarter of the desired domain $\Omega$.

\subsection{Numerical simulations}

In the following we present several numerical computations in two space dimensions for the scheme \eqref{eq:FEAc}-\eqref{eq:FEAd}, \eqref{eq:FEAp} and \eqref{eq:simpleFEAc}.  We will fix the interfacial parameter to $\eps = 0.05$ throughout, and employ a fine mesh size of $h_{f} = 0.02$, with $h_{c} = 8 h_{f}$.  For the uniform time step size we choose $\tau = 10^{-3}$.  In order to define the initial data, we introduce the following functions.  Given $R_{2}, R_{3} > 0$, $\delta_{2}, \delta_{3} \geq 0$ and $m_{2}, m_{3} \in \N$, we define 
\begin{equation} \label{eq:Rtilde}
\widetilde{R}_{i}(\vec{x}) = R_{i} + \delta_{i}\cos(m_{i} \theta), \quad \text{ with } \theta = \tan^{-1} \left (\frac{x_{2}}{x_{1}} \right ), \quad i = 2, 3.
\end{equation}
Then we set
\begin{equation} 
v_{1}(\vec{x}) = 1, \quad v_{i}(\vec{x}) = \begin{cases} 
1 & \text{ if } r(\vec{x})- \widetilde{R}_{i}(\vec{x}) \leq - \frac{\eps \pi}{2}, \\
 \frac{1}{2} - \frac{1}{2} \sin \left ( \frac{r(\vec{x})- \widetilde{R}_{i}(\vec{x})}
{\eps} \right ) & \text{ if } \abs{r(\vec{x}) -  \widetilde{R}_{i}(\vec{x})} < \frac{\eps \pi}{2}, \\
 0 &  \text{ if } r(\vec{x}) - \widetilde{R}_{i}(\vec{x}) \geq \frac{\eps \pi}{2}, \\
\end{cases} \quad i = 2, 3,
\label{eq:v}
\end{equation}
where $r(\vec{x})=[\sum_{j=1}^{d} \abs{x_{j}}^{2}]^{\frac{1}{2}}$.
In line with the asymptotics of the phase field approach, the interfacial thicknesses for $v_{2}$ and $v_{3}$ are equal to $\eps \pi$, see for example \cite[Equation (3.24)]{GLSS}.  For the initial data $\bm{\varphi}_{h}^{0}$ to \eqref{eq:FEAc}-\eqref{eq:FEAp} we set 
\begin{equation} 
(\bm{\varphi}_{h}^{0})_{i}(\vec{x}) = v_{i}(\vec{x}) \prod_{j=i+1}^{3} (1 - v_{j}(\vec{x})), 
\quad i = 1,2,3, \label{eq:uinit}
\end{equation}
see also \cite[(3.5)]{Nurnberg09}.  Unless otherwise stated, we use $R_{2} = 2$, $R_{3} = 1$ in \eqref{eq:Rtilde} and choose 
\begin{subequations}
\begin{align}
\delta_{2} & = 0.1, \quad m_{2} = 2, \quad \delta_{3} = 0.05, \quad m_{3} = 6, \label{eq:pertsg} \\
\text{or~ }\
\delta_{2} & = 0.1, \quad m_{2} = 6, \quad \delta_{3} = 0.05, \quad m_{3} = 4,
\label{eq:pertsh} \\
\text{or~ }\
\delta_{2} & = 0.1, \quad m_{2} = 2, \quad \delta_{3} = 0.
\label{eq:pertsi} 
\end{align}
\end{subequations}
In addition, we set $p_{h}^{0} = 0$ and $\sigma_{h}^{0} = \sigma_{B}$.  For the graphical
representation of $\bm{\varphi}_{h}^{n}$ we will always plot the scalar quantity $(0, 1, 2)^\top \cdot \bm{\varphi}_{h}^{n}$, which clearly takes on the values $0,\ 1,\ 2$ in the host, proliferating and necrotic phases, respectively.

In a first simulation, we investigate the radial growth of the tumour phases for the source term \eqref{eq:Unew} given sufficient nutrient.  To this end, we let $\Omega = (-5,5)^{2}$ and
\begin{align*}
\mathcal{A} & = 0.5, \quad D = 1, \quad \beta = 0.1 , \quad \mathcal{C} = 2, \quad \mathcal{P} = 0.5, \quad \lambda = \chi_{\varphi} = 0.1, \quad
\mathcal{D}_{N} = 0.
\end{align*}
For the values $\sigma_{B} = 5$ and $K = 0.01$
we start with the perturbed initial profiles
defined by \eqref{eq:pertsg} and \eqref{eq:pertsh}, respectively, and observe that in each case the initial perturbations get smoothed out, leading to a nearly radial growth.  We show the corresponding simulations in Figures~\ref{fig:multifig15}
and \ref{fig:multifig15new}.
\begin{figure}[h]
% colour palette: rainbow
\center
\mbox{
\includegraphics[angle=-0,width=0.32\textwidth]{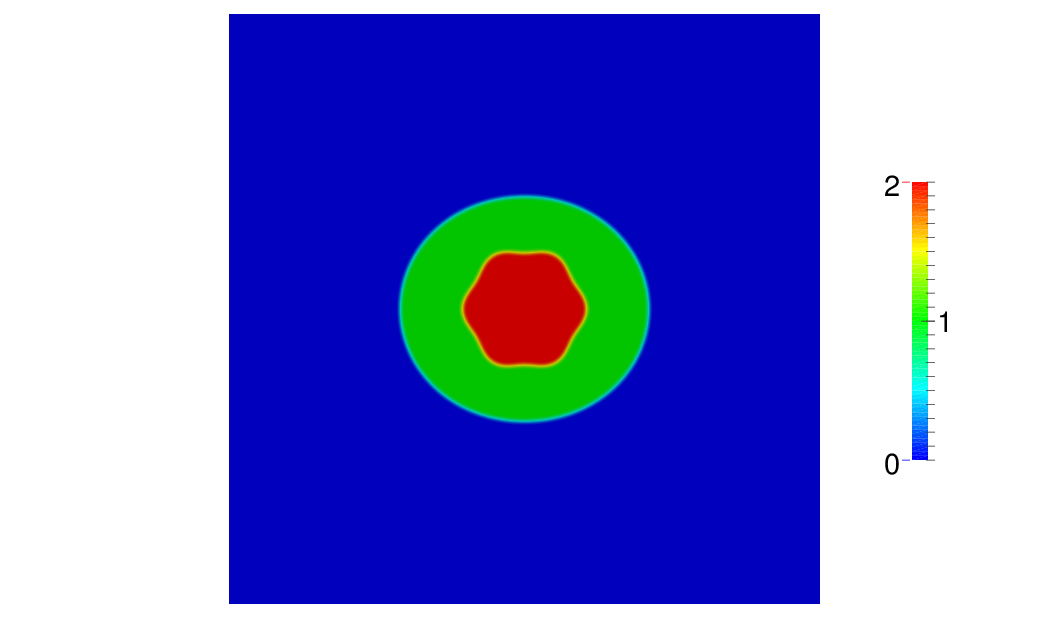}
\includegraphics[angle=-0,width=0.32\textwidth]{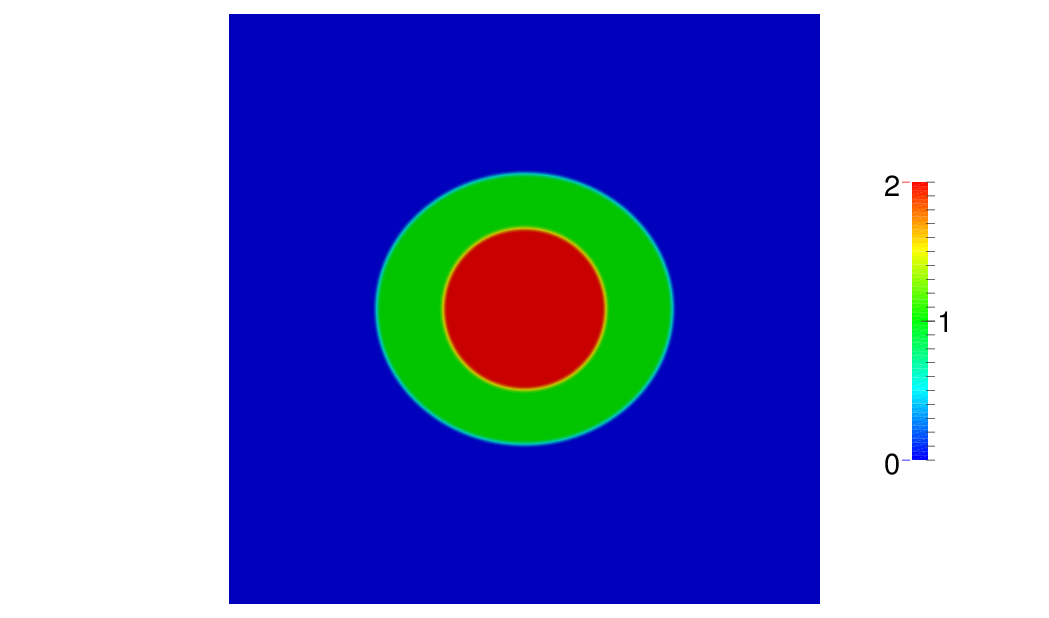}
\includegraphics[angle=-0,width=0.32\textwidth]{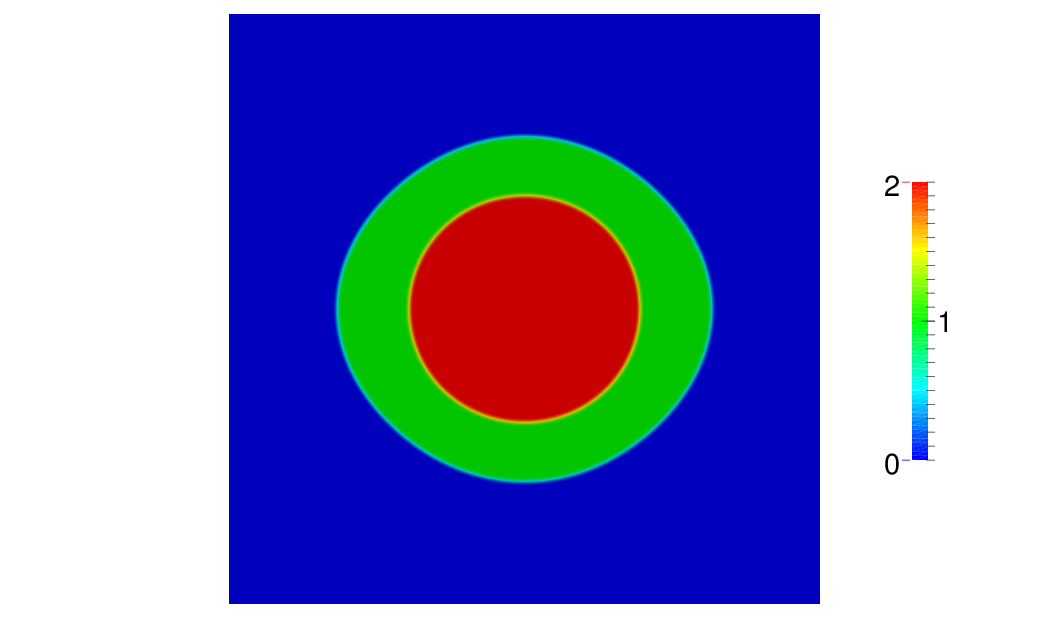}
}
\caption{($\sigma_B=5$, $K = 0.01$)
The solution $\bm{\varphi}_{h}^{n}$ at times $t=0,\ 2,\ 5$ for \eqref{eq:Unew} with initial profile \eqref{eq:pertsg}.}
% /home/silo2/rn/alberta/ch_tumour/multi_darcy.fig15_sigma5
\label{fig:multifig15}
\end{figure}%
\begin{figure}[h]
% colour palette: rainbow
\center
\mbox{
\includegraphics[angle=-0,width=0.32\textwidth]{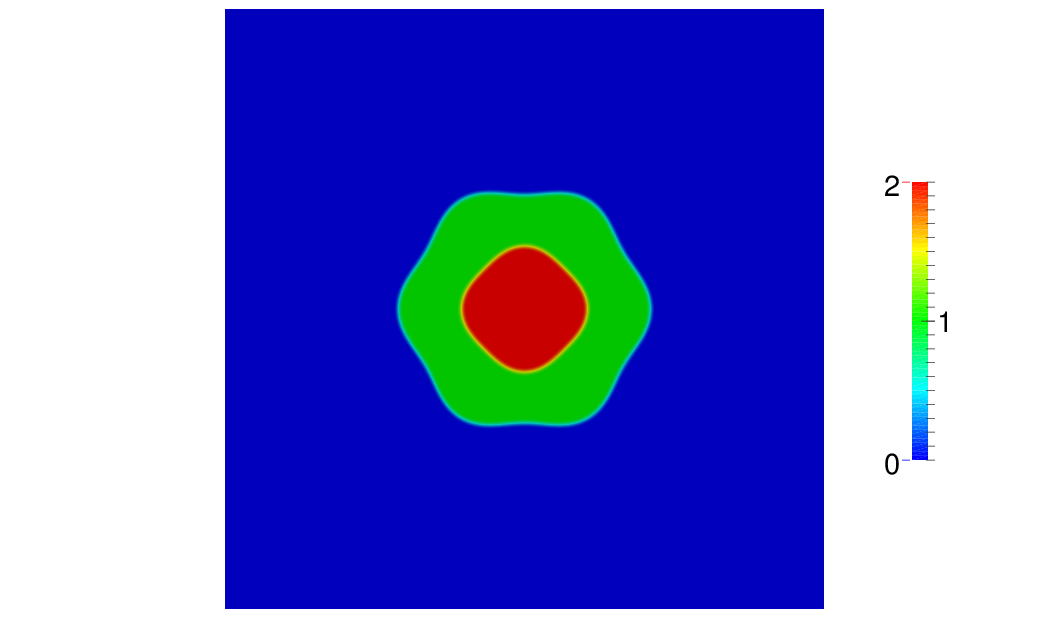} 
\includegraphics[angle=-0,width=0.32\textwidth]{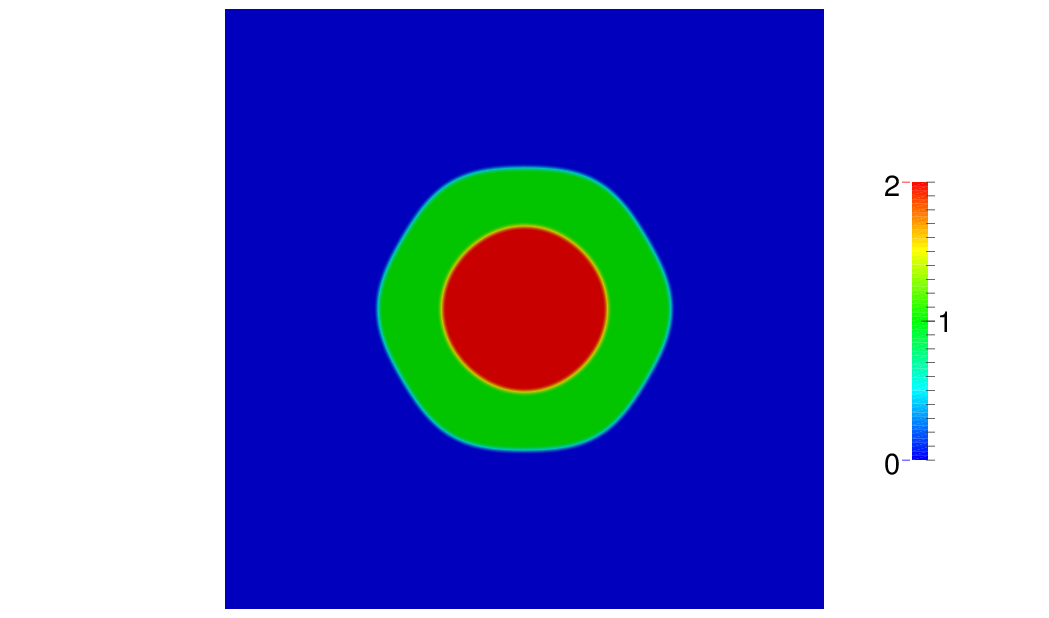} 
\includegraphics[angle=-0,width=0.32\textwidth]{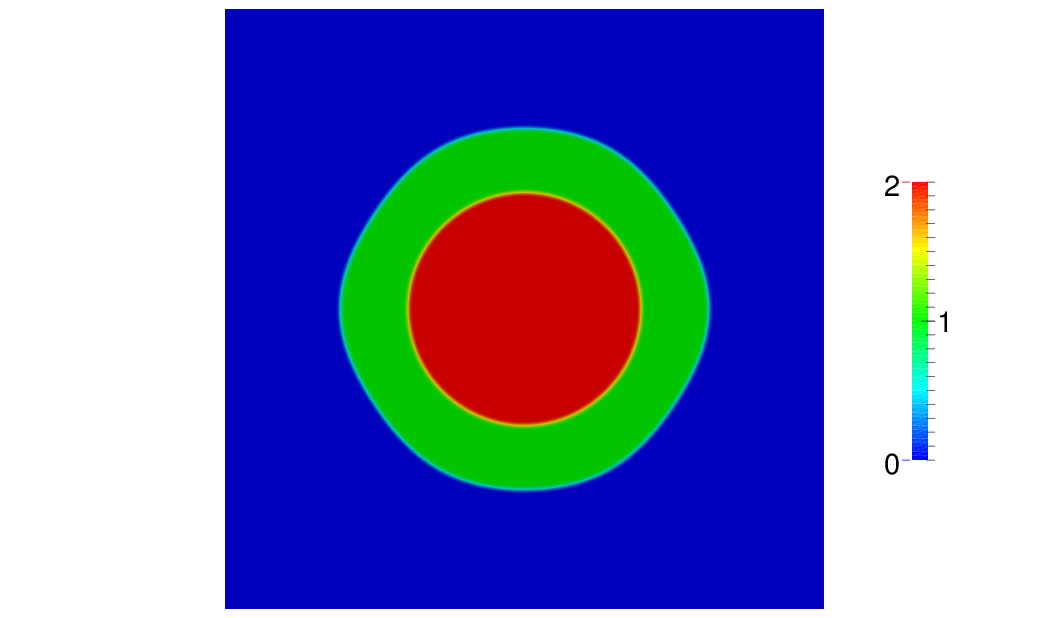} 
}
\caption{($\sigma_B=5$, $K = 0.01$)
The solution $\bm{\varphi}_{h}^{n}$ at times $t=0,\ 2,\ 5$ for \eqref{eq:Unew} with initial profile \eqref{eq:pertsh}.}
% /home/silo2/rn/alberta/ch_tumour/multi_darcy.fig15_sigma5_pertnew
\label{fig:multifig15new}
\end{figure}%

In order to investigate the radial growth in more detail, and to study the dependence on the presence of the fluid flow and on the strength of the nutrient source, we repeat the simulations in 
Figures~\ref{fig:multifig15} and \ref{fig:multifig15new} for circular
initial data, and for different values of $\sigma_{B}$ and $K$.  In particular, 
we choose $\delta_{2} = \delta_{3} = 0$ in \eqref{eq:Rtilde} and let $\sigma_{B} \in \{2, 5, 10\}$, with $K = 0$ or $K = 0.01$.  Plots of the radii of the two interfacial layers over time for the different parameters can be seen in
Figure~\ref{fig:multifig15r}.
\begin{figure}[h]
% ../plotradii && mv radii.ps multifig15_s2_r.ps
\center
\mbox{
\includegraphics[angle=-90,width=0.32\textwidth]{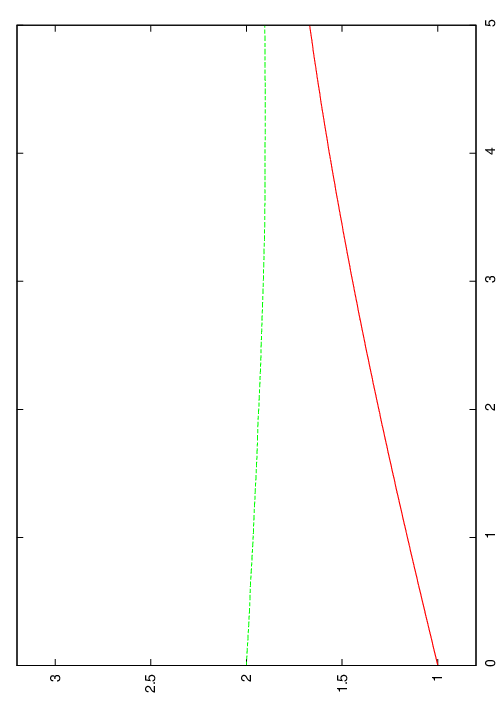} 
\includegraphics[angle=-90,width=0.32\textwidth]{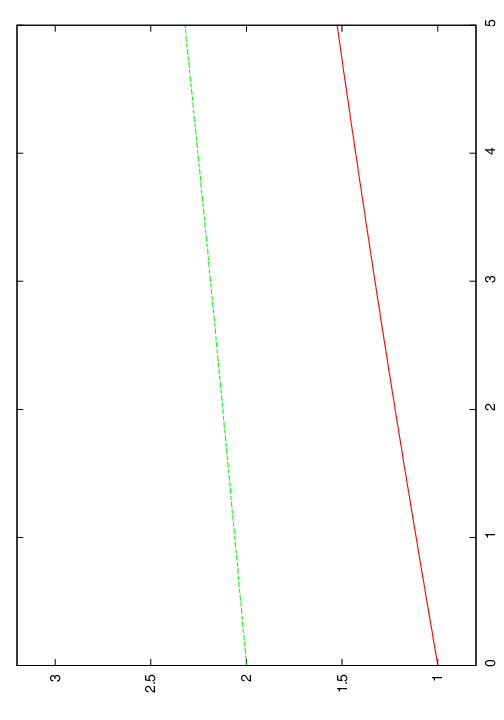} 
\includegraphics[angle=-90,width=0.32\textwidth]{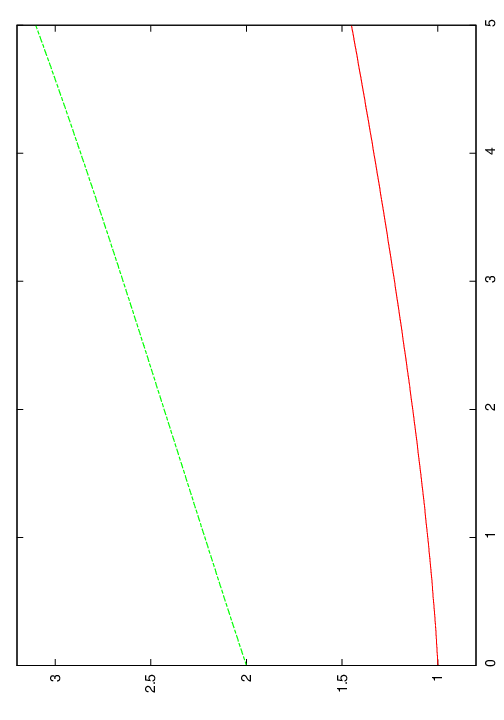} 
}
\mbox{
\includegraphics[angle=-90,width=0.32\textwidth]{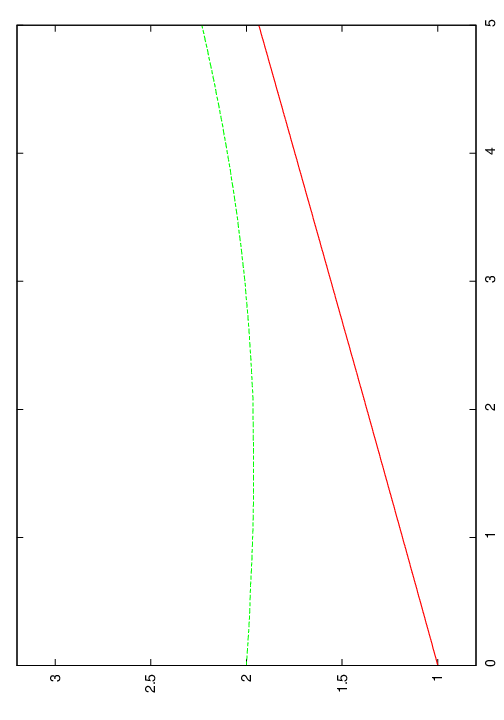} 
\includegraphics[angle=-90,width=0.32\textwidth]{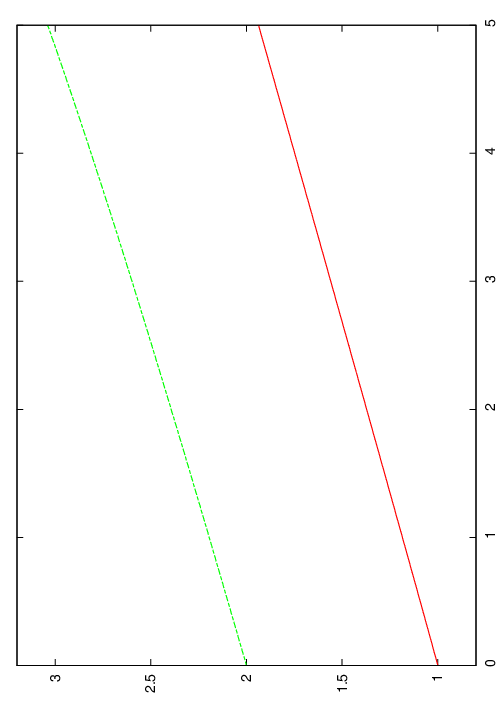} 
\includegraphics[angle=-90,width=0.32\textwidth]{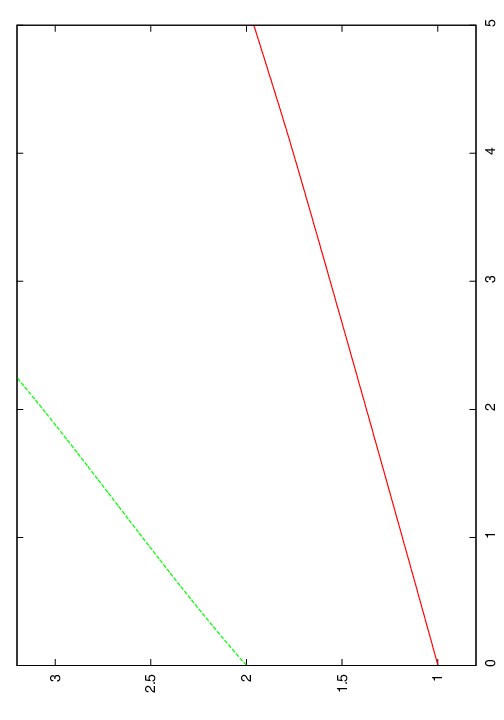} 
}
\caption{A plot of the two radii over time for \eqref{eq:Unew}.
The above plots are without fluid flow, i.e., $K = 0$, 
for $\sigma_{B} = 2,\ 5,\ 10$. Below the same for $K = 0.01$.}
% /home/silo2/rn/alberta/ch_tumour/multi.fig15np_sigma2
% /home/silo2/rn/alberta/ch_tumour/multi.fig15np_sigma5
% /home/silo2/rn/alberta/ch_tumour/multi.fig15np_sigma10
% /home/silo2/rn/alberta/ch_tumour/multi_darcy.fig15np_sigma2
% /home/silo2/rn/alberta/ch_tumour/multi_darcy.fig15np_sigma5
% /home/silo2/rn/alberta/ch_tumour/multi_darcy.fig15np_sigma10
\label{fig:multifig15r}
\end{figure}%

Looking at the results for $\sigma_B=2$ in particular, we also investigate whether the two radii eventually meet.  To this end, we repeat the simulations for a longer time.  As observed in Figure~\ref{fig:multifig15T}, in the absence of Darcy flow the inner radius indeed catches up with the outer radius.  When Darcy flow is present, however, a constant minimum distance between the 
two radii is maintained throughout the evolution.
\begin{figure}[h]
% ../plotradii && mv radii.ps multifig15np_s2_rT.ps
\center
\mbox{
\includegraphics[angle=-90,width=0.46\textwidth]{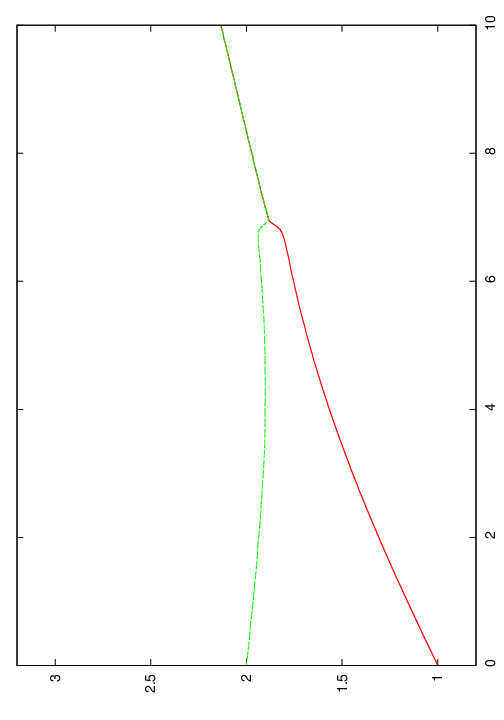} 
\includegraphics[angle=-90,width=0.46\textwidth]{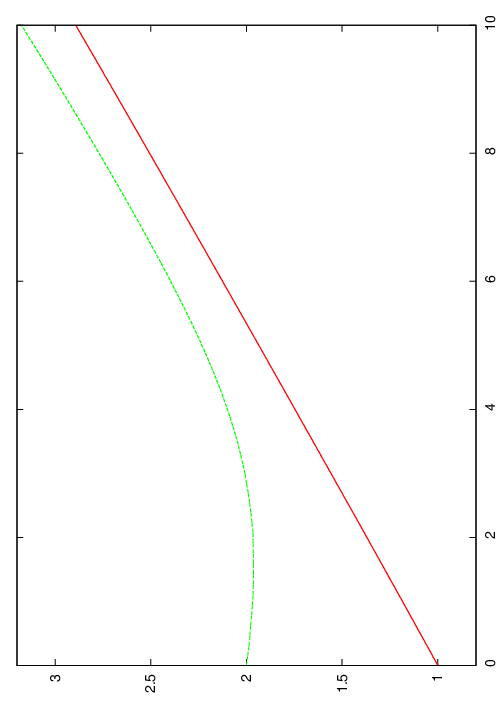} 
}
\caption{A plot of the two radii over time for \eqref{eq:Unew} and $\sigma_{B} = 2$. The left plot is without fluid flow, i.e.,\ $K = 0$, while the right plot is for $K = 0.01$. }
% /home/silo2/rn/alberta/ch_tumour/multi.fig15np_sigma2
% /home/silo2/rn/alberta/ch_tumour/multi_darcy.fig15np_sigma2
\label{fig:multifig15T}
\end{figure}%
We show some snapshots of the two different evolutions in 
Figure~\ref{fig:multifig15Tphi}.
\begin{figure}[h]
% colour palette: rainbow
\center
\mbox{
\includegraphics[angle=-0,width=0.32\textwidth]{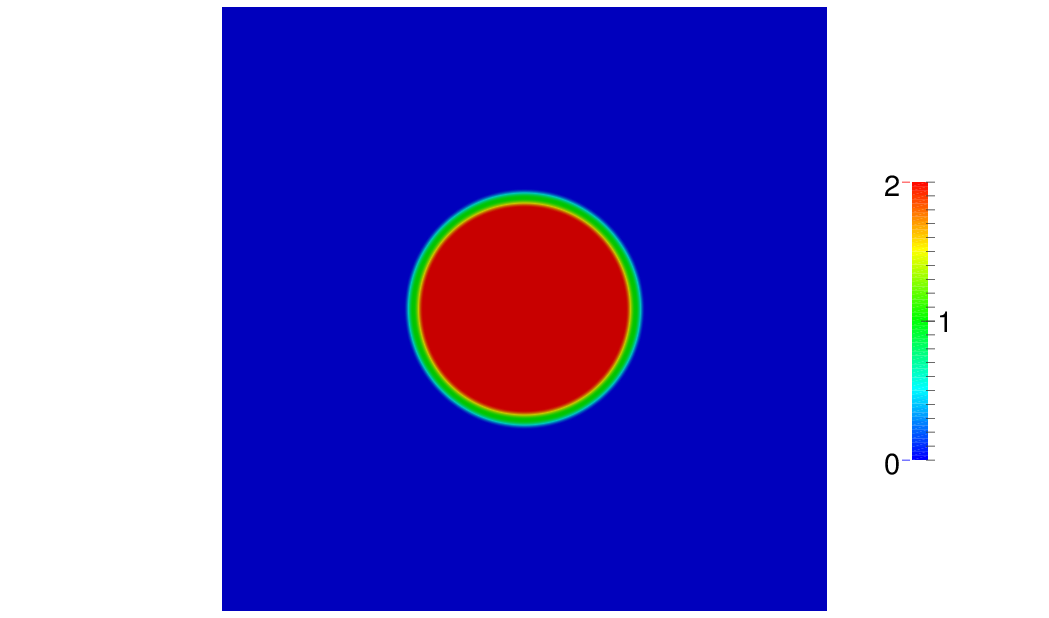} 
\includegraphics[angle=-0,width=0.32\textwidth]{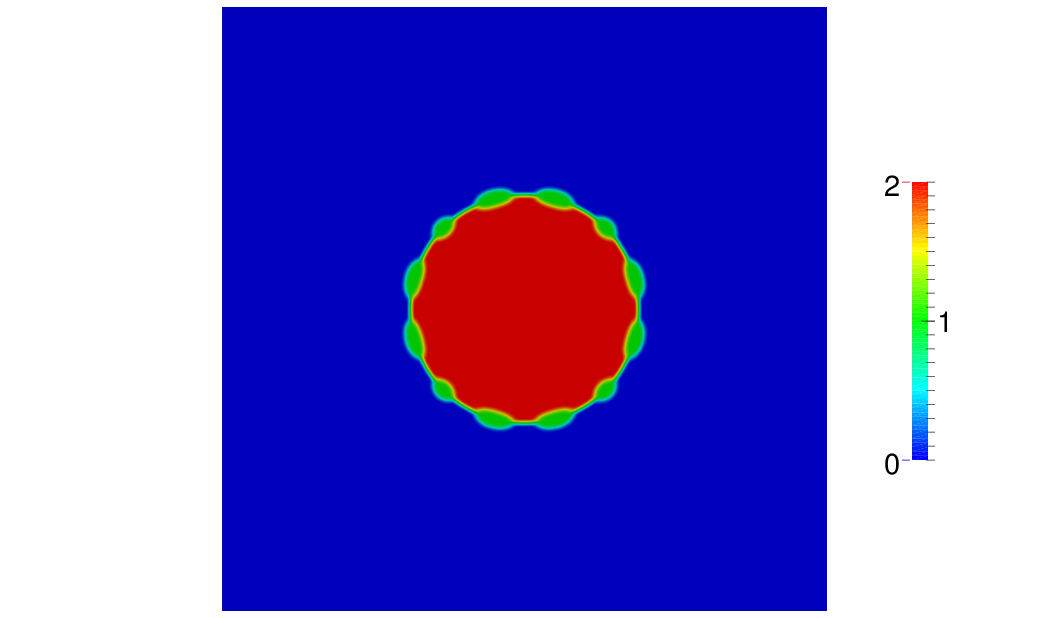} 
\includegraphics[angle=-0,width=0.32\textwidth]{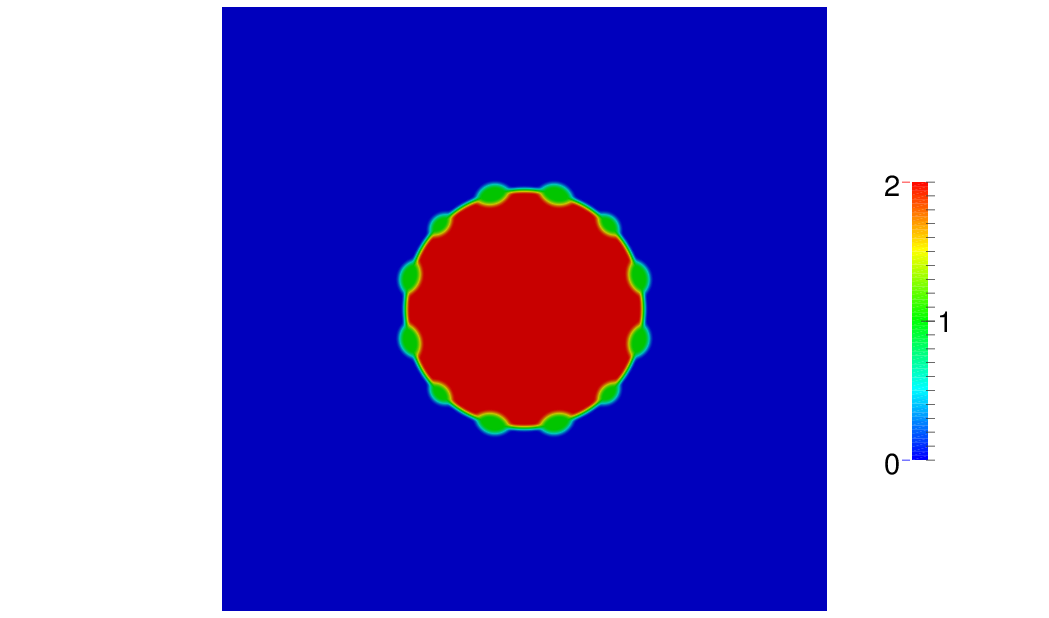} 
}
\mbox{
\includegraphics[angle=-0,width=0.32\textwidth]{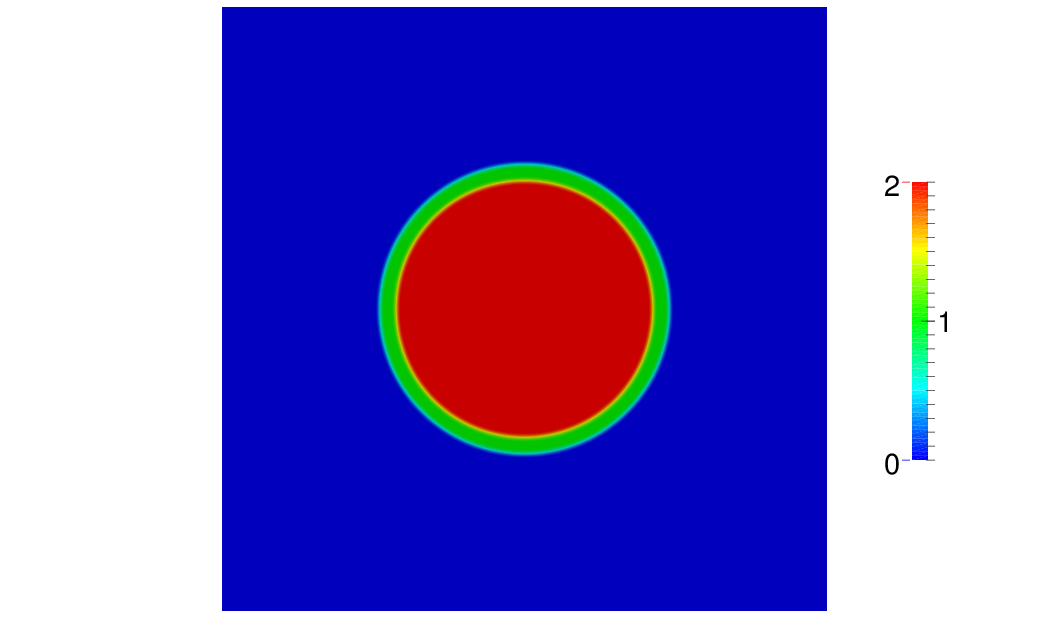} 
\includegraphics[angle=-0,width=0.32\textwidth]{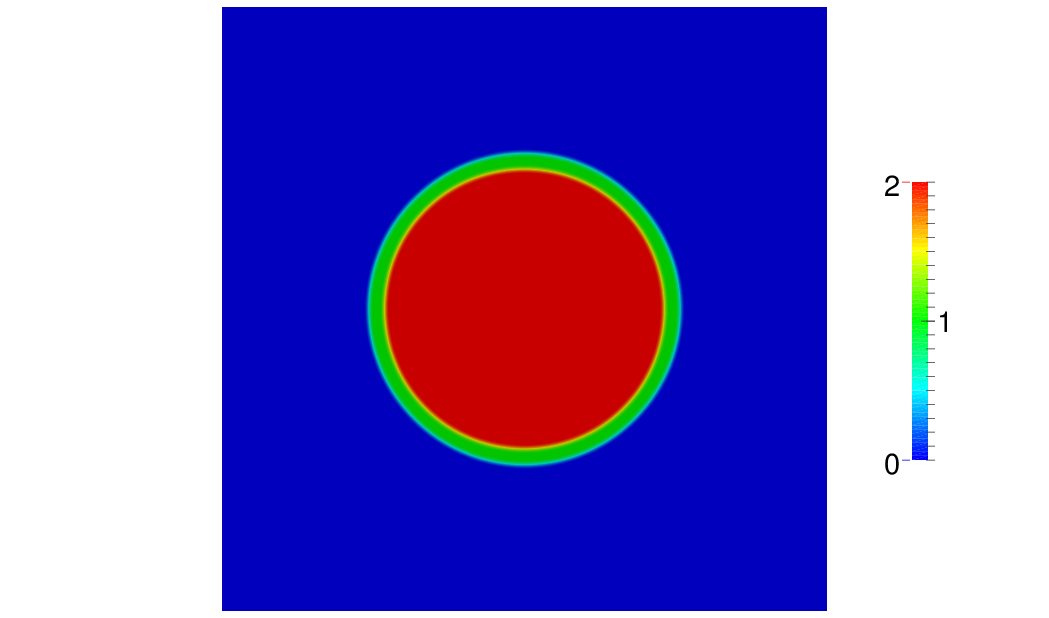} 
\includegraphics[angle=-0,width=0.32\textwidth]{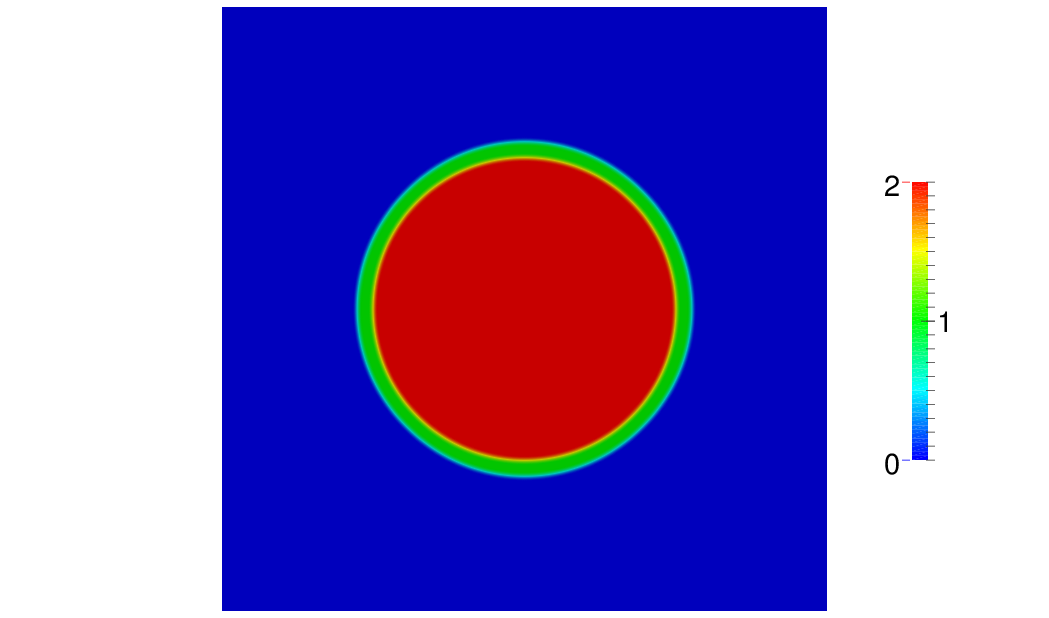} 
}
\caption{
The solution $\bm{\varphi}_{h}^{n}$ at times $t=6,\ 7,\ 8$ for the two evolutions from Figure~\ref{fig:multifig15T}. Above for $K=0$, below
for $K=0.01$.
}
\label{fig:multifig15Tphi}
\end{figure}%

The same simulation as in Figure~\ref{fig:multifig15}, but now for the source
term \eqref{eq:U} can be seen in Figure~\ref{fig:multifig11}.  As a comparison
we also show the evolution without the fluid flow, see Figure~\ref{fig:multifig11nd}.  In this case we observe quite complex nucleation phenomena of the necrotic phase within the proliferating phase.
\begin{figure}[h]
\center
% colour palettes: 'rainbow' and 'Cold and Hot'
\mbox{
\includegraphics[angle=-0,width=0.25\textwidth]{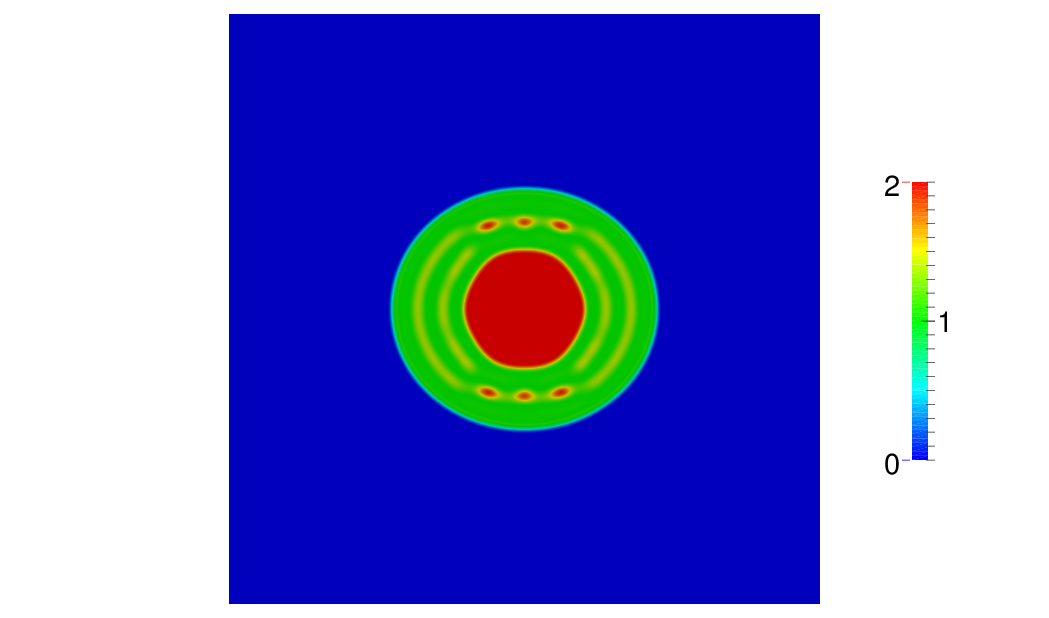} 
\includegraphics[angle=-0,width=0.25\textwidth]{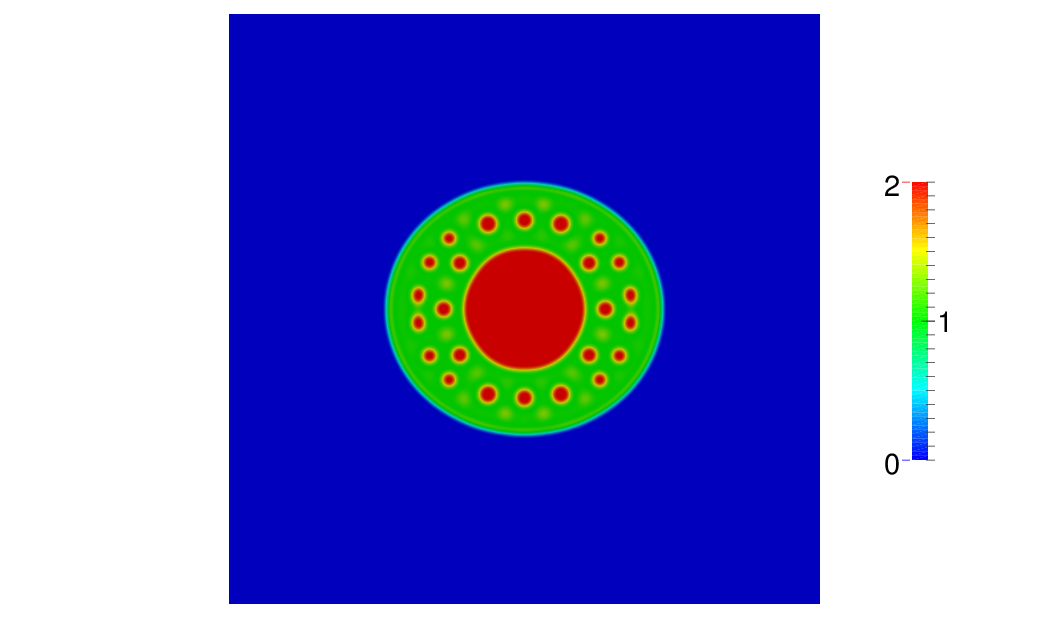} 
\includegraphics[angle=-0,width=0.25\textwidth]{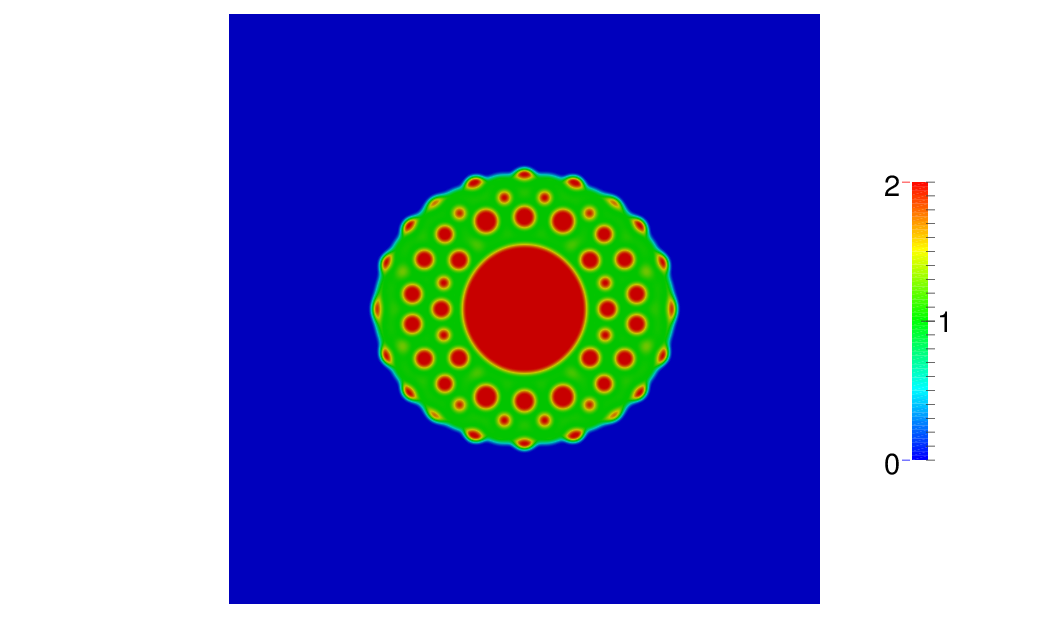} 
\includegraphics[angle=-0,width=0.25\textwidth]{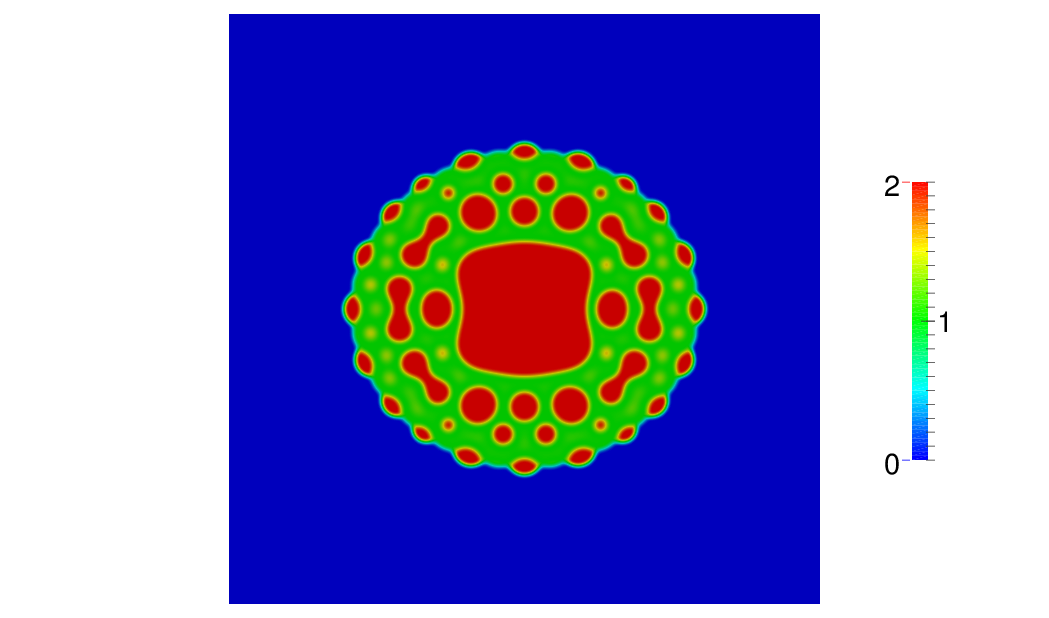} 
}
\mbox{
\includegraphics[angle=-0,width=0.25\textwidth]{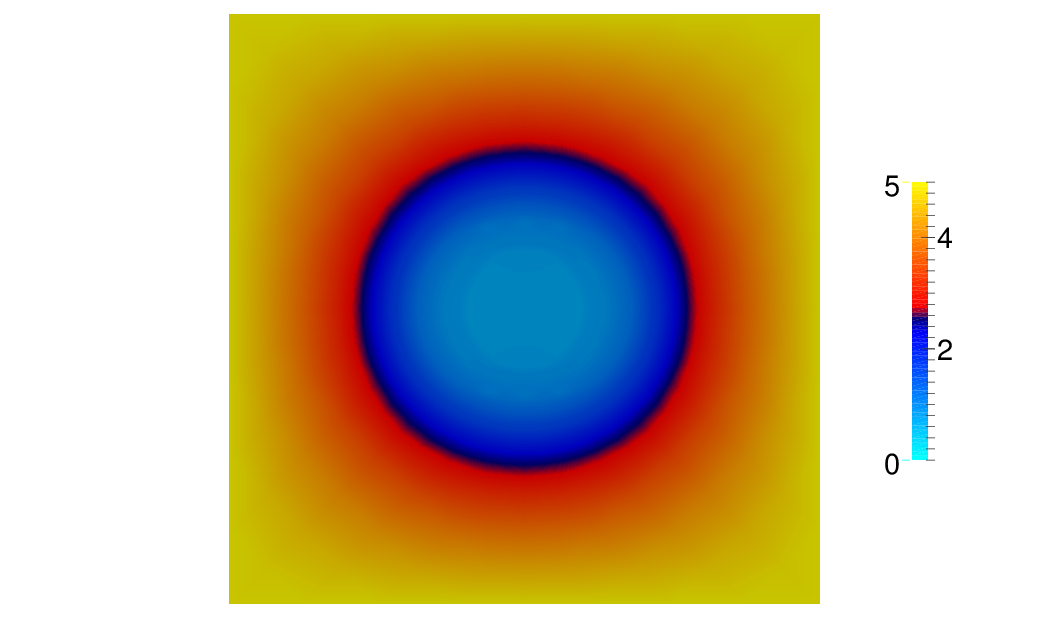} 
\includegraphics[angle=-0,width=0.25\textwidth]{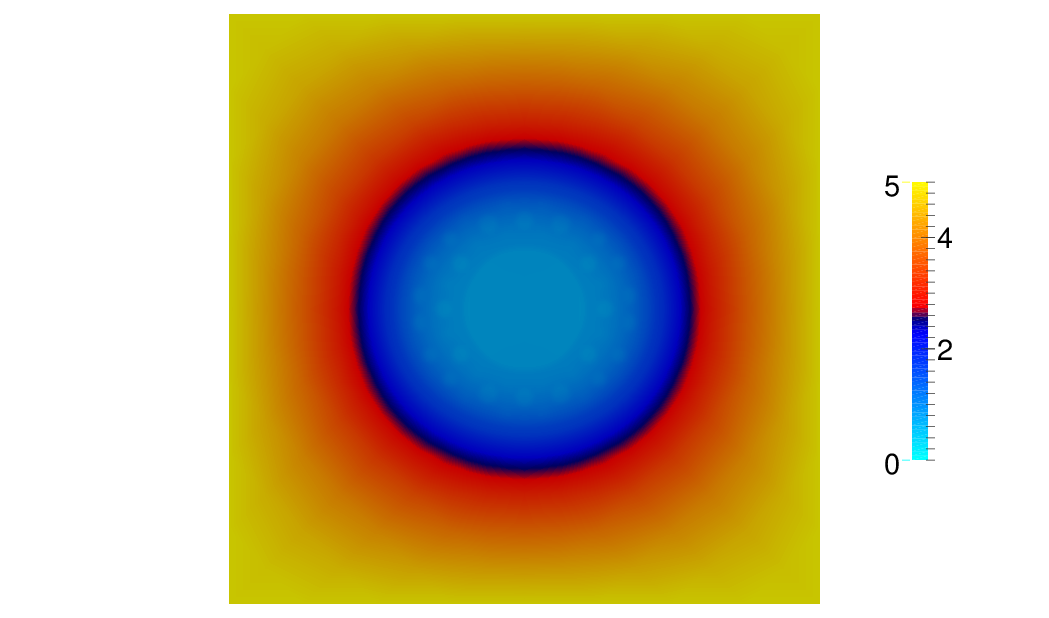} 
\includegraphics[angle=-0,width=0.25\textwidth]{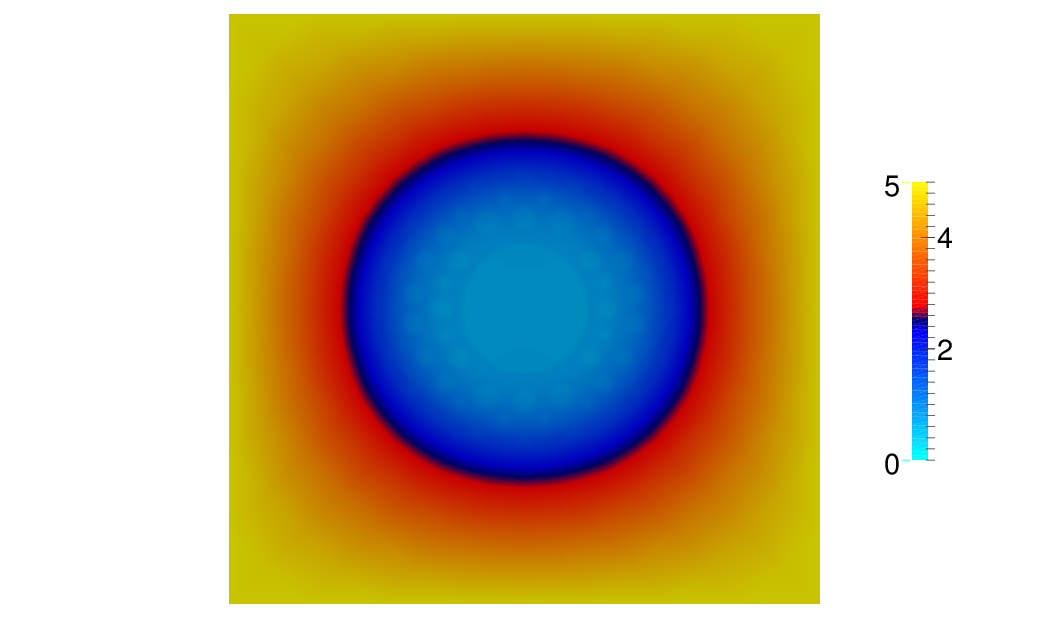} 
\includegraphics[angle=-0,width=0.25\textwidth]{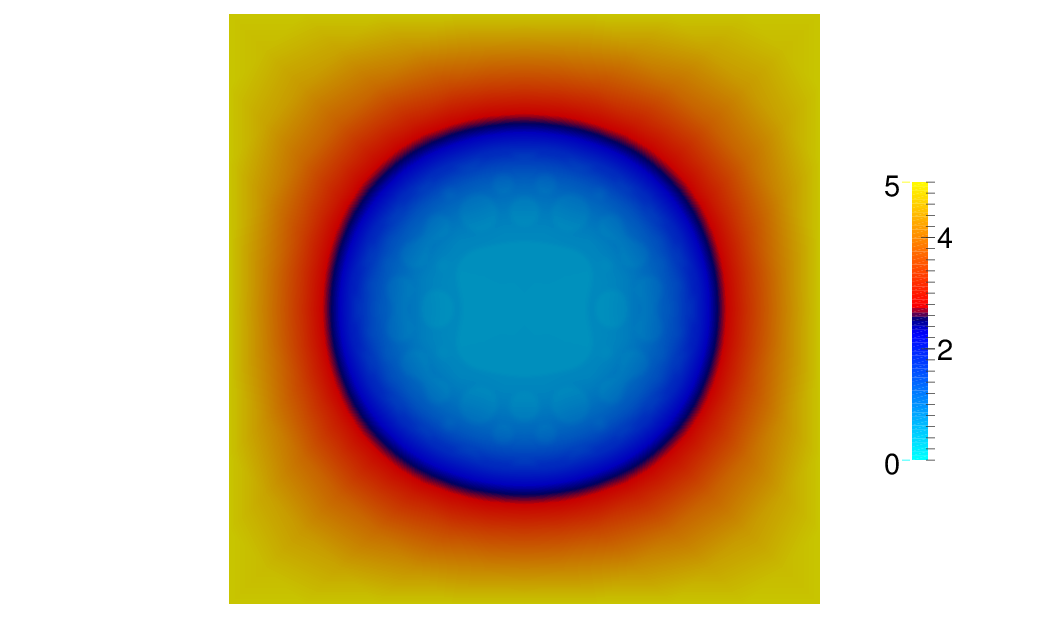} 
}
\caption{($\sigma_{B} = 5$, $K = 0.01$)
The solution $\bm{\varphi}_{h}^{n}$ at times $t=0.3,\ 0.5,\ 1,\ 2$ for \eqref{eq:U}. Below we show plots of $\sigma_{h}^{n}$ at the same times.
}
% /home/silo2/rn/alberta/ch_tumour/multi_darcy.fig11_sigma5
\label{fig:multifig11}
\end{figure}%
\begin{figure}[h]
\center
% colour palettes: 'rainbow' and 'Cold and Hot'
\mbox{
\includegraphics[angle=-0,width=0.25\textwidth]{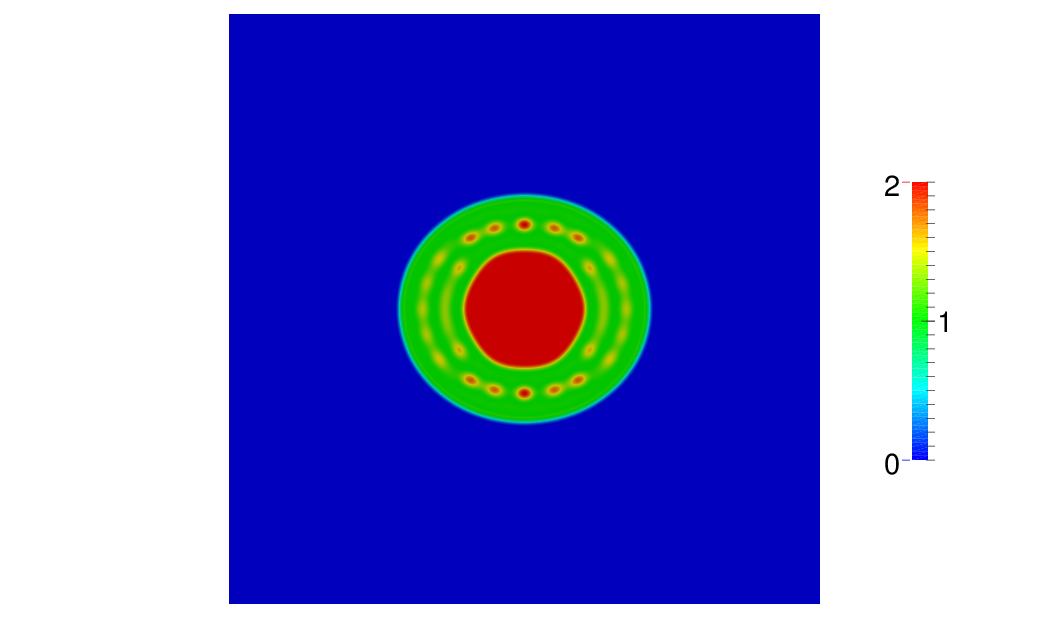} 
\includegraphics[angle=-0,width=0.25\textwidth]{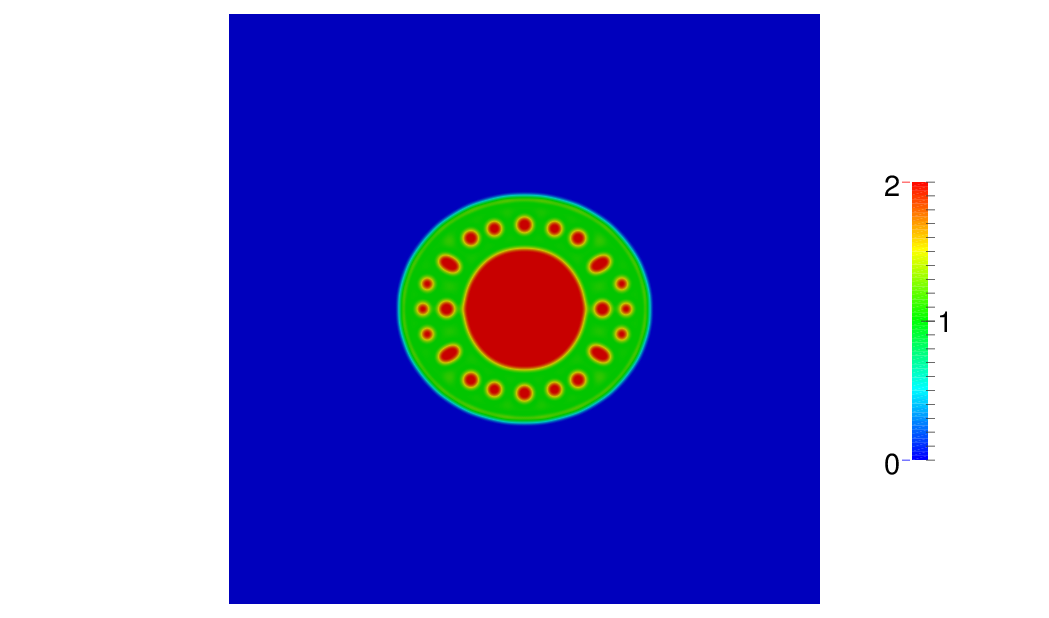} 
\includegraphics[angle=-0,width=0.25\textwidth]{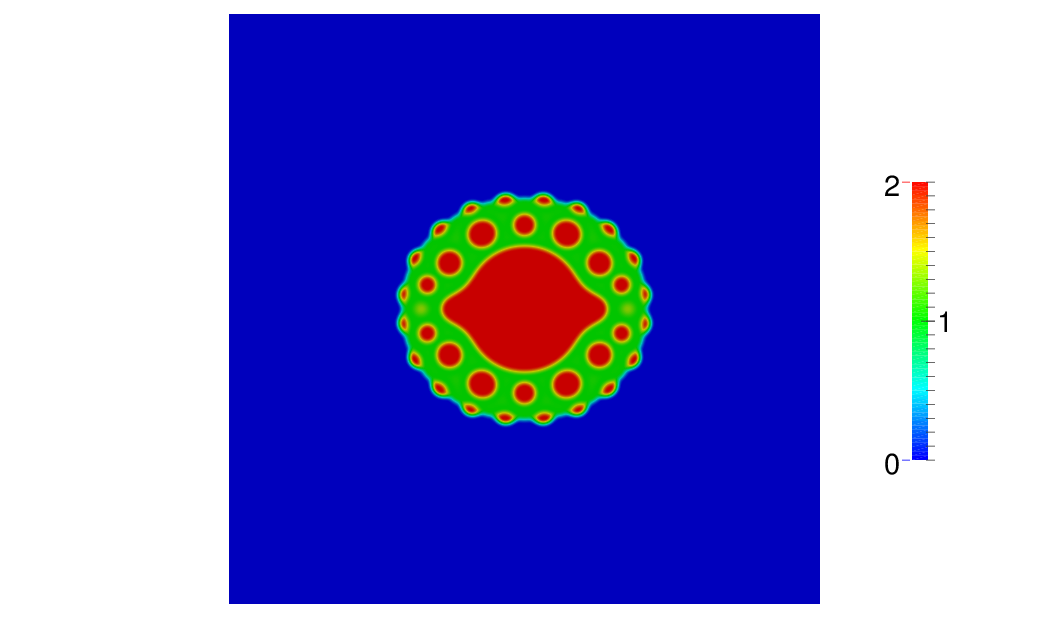} 
\includegraphics[angle=-0,width=0.25\textwidth]{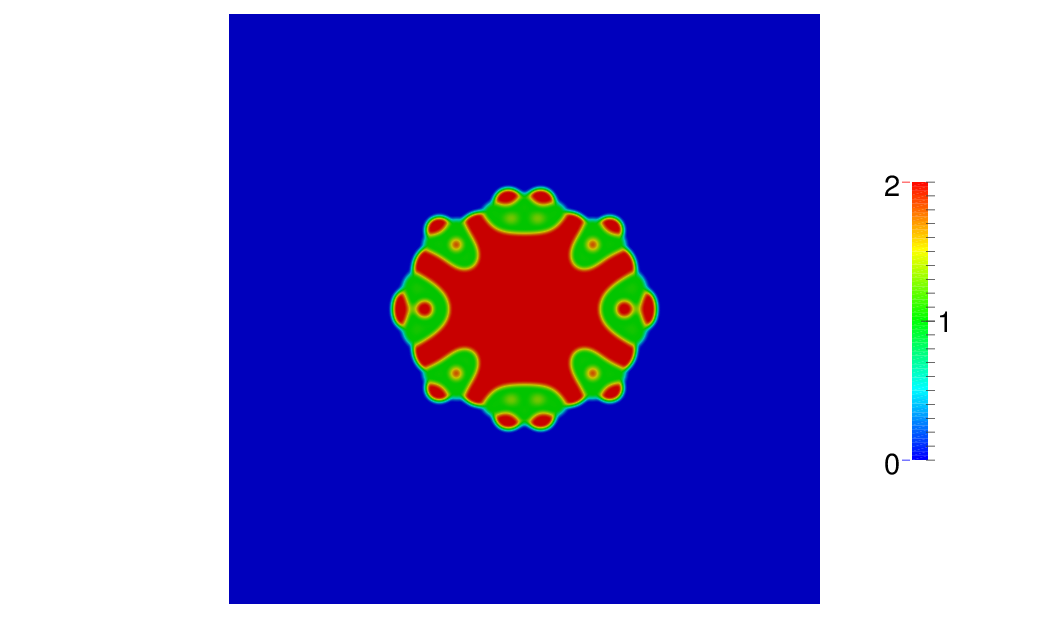} 
}
\mbox{
\includegraphics[angle=-0,width=0.25\textwidth]{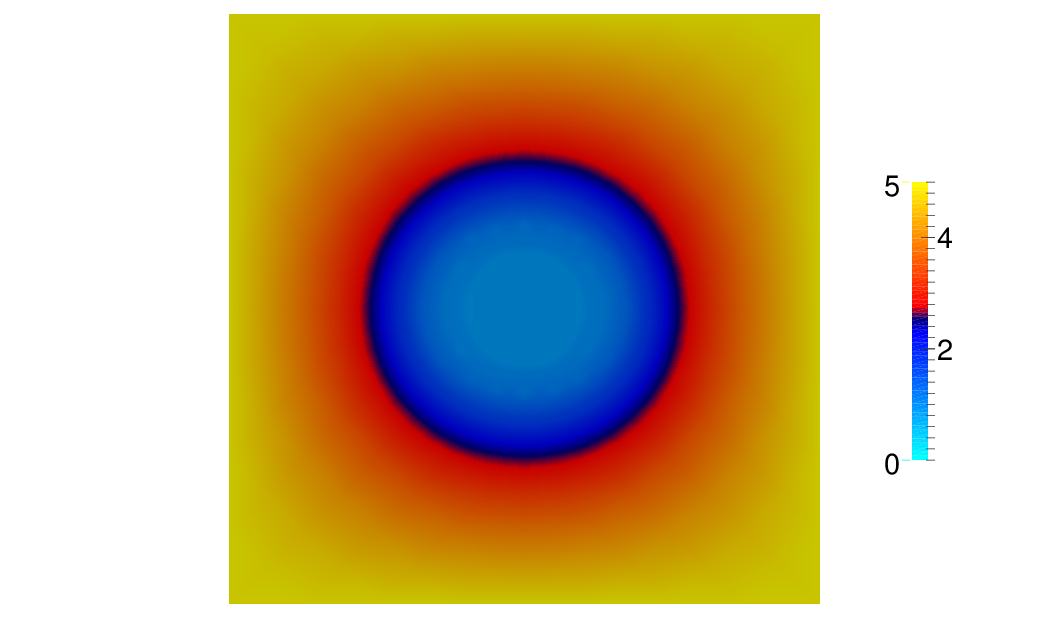} 
\includegraphics[angle=-0,width=0.25\textwidth]{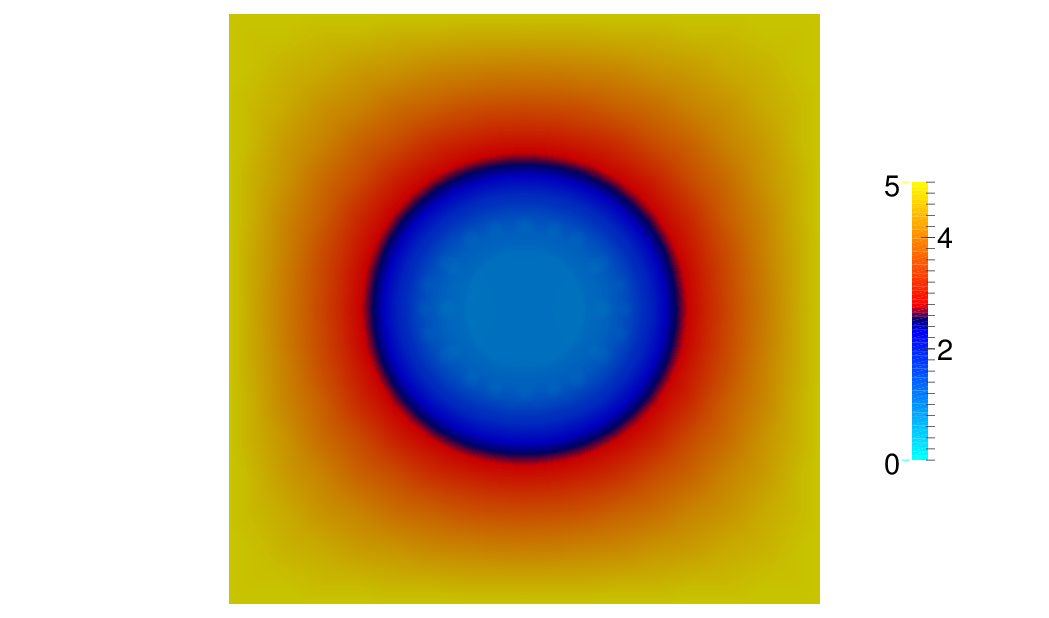} 
\includegraphics[angle=-0,width=0.25\textwidth]{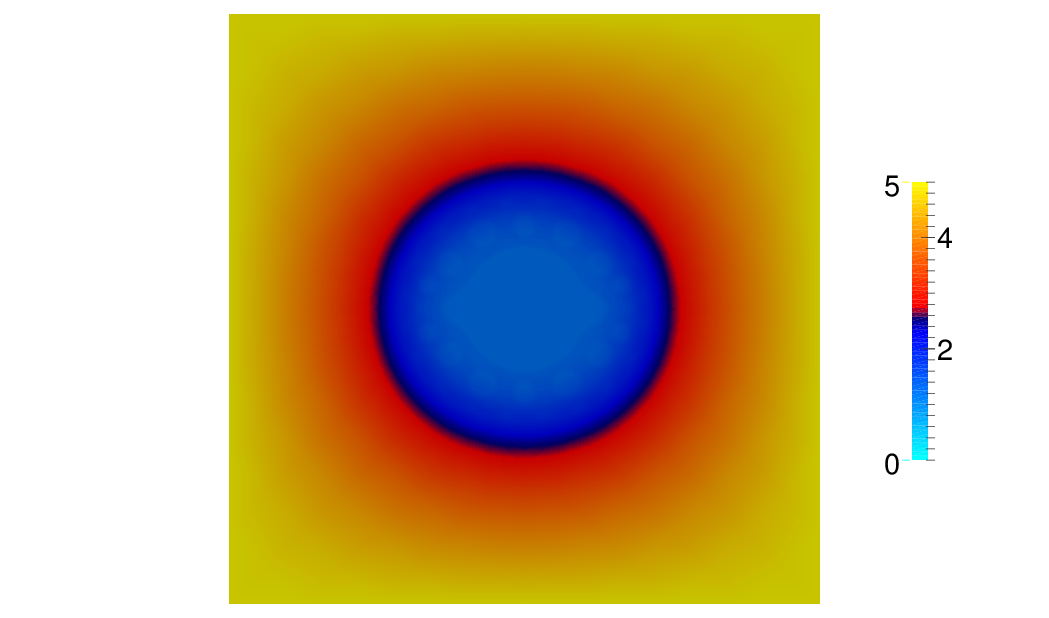} 
\includegraphics[angle=-0,width=0.25\textwidth]{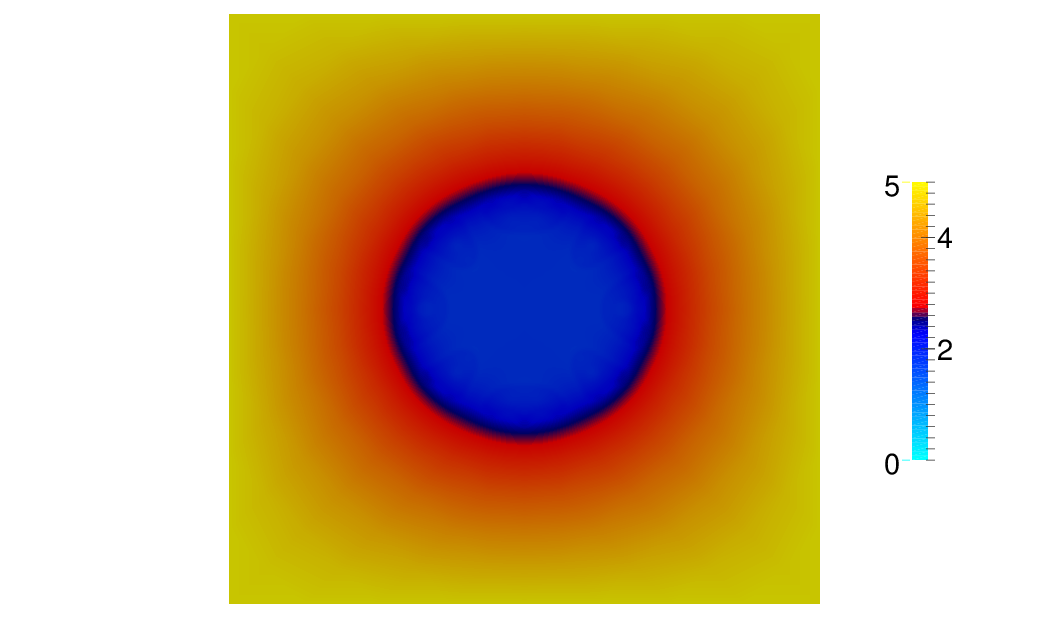} 
}
\caption{($\sigma_{B} = 5$, $K = 0$)
The solution $\bm{\varphi}_{h}^{n}$ at times $t = 0.3,\ 0.5,\ 1,\ 2$ for \eqref{eq:U}. 
Below we show plots of $\sigma_{h}^{n}$ at the same times.
}
% /home/silo2/rn/alberta/ch_tumour/multi.fig11_sigma5
\label{fig:multifig11nd}
\end{figure}%

Finally, we consider a numerical simulation on the larger domain $\Omega = (-10,10)^{2}$ for the source term \eqref{eq:U}, with the parameters
\begin{align*}
\mathcal{A} & = 0, \, D = 1, \, \beta = 0.1, \, 
\mathcal{C} = 2, \, \mathcal{P} = 0.1, \, \lambda = 0.02, \, \chi_{\varphi} = 5, \, \mathcal{D}_{N} = 0, \, \sigma_{B} = 1, \, K = 0.01.
\end{align*}
The evolution of the three phases is shown in Figure~\ref{fig:nnlarge1}, where we chose the initial radius $R_{3} = 1$. It can be seen that both tumour phases grow, with some instabilities developing at the tumour/host cell interface. However, if the initial necrotic phase is smaller, it vanishes and the perturbations become more pronounced, see Figure~\ref{fig:nnlarge05}.  In fact, towards the end the evolution in Figure~\ref{fig:nnlarge05} becomes similar to \cite[Fig.\ 5]{GLSS}.
\begin{figure}[h]
\center
\mbox{
\includegraphics[angle=-0,width=0.25\textwidth]{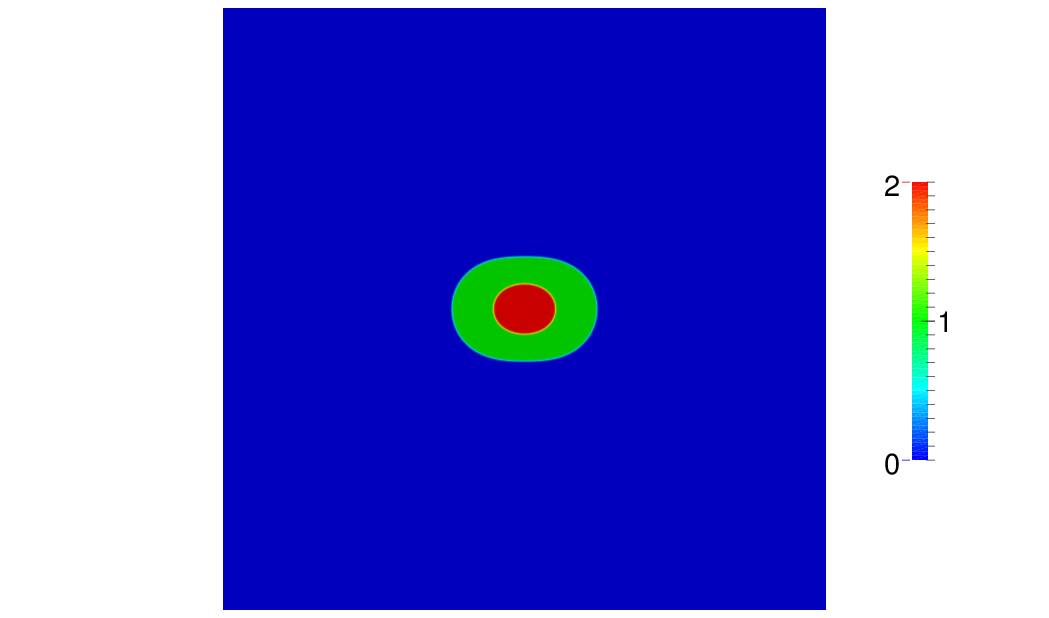} 
\includegraphics[angle=-0,width=0.25\textwidth]{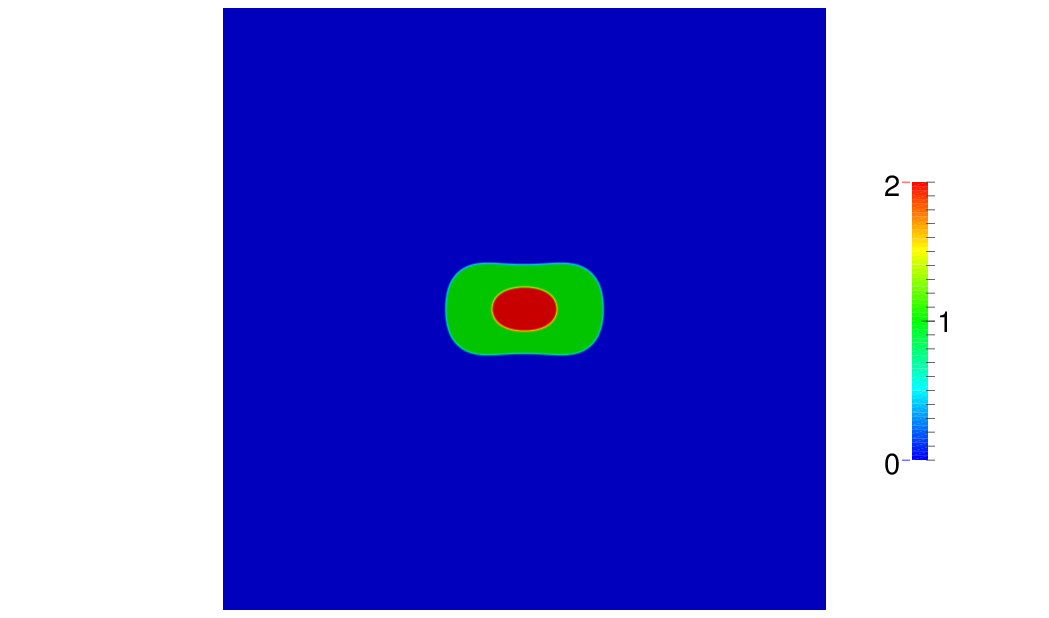} 
\includegraphics[angle=-0,width=0.25\textwidth]{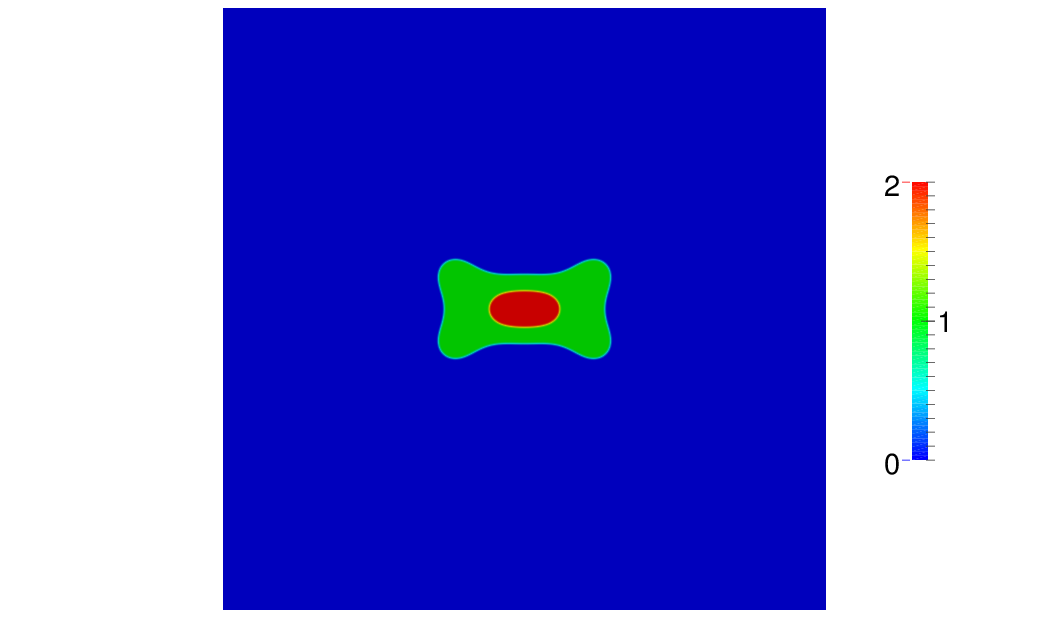} 
\includegraphics[angle=-0,width=0.25\textwidth]{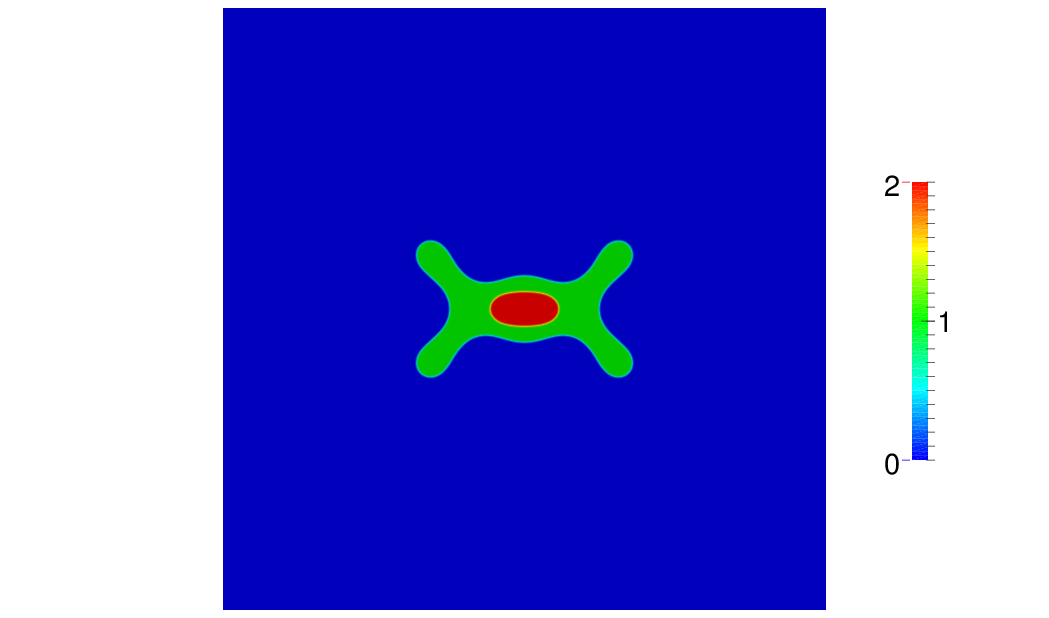} 
}
\caption{
The solution $\bm{\varphi}_{h}^{n}$ at times $t=10,\ 15,\ 20,\ 25$ for \eqref{eq:U} with the perturbed initial data \eqref{eq:pertsi} and $R_{3} = 1$.
}
\label{fig:nnlarge1}
% /home/silo2/rn/alberta/ch_tumour/multi_darcy.fig26_R1
\end{figure}%
\begin{figure}
\center
\mbox{
\includegraphics[angle=-0,width=0.25\textwidth]{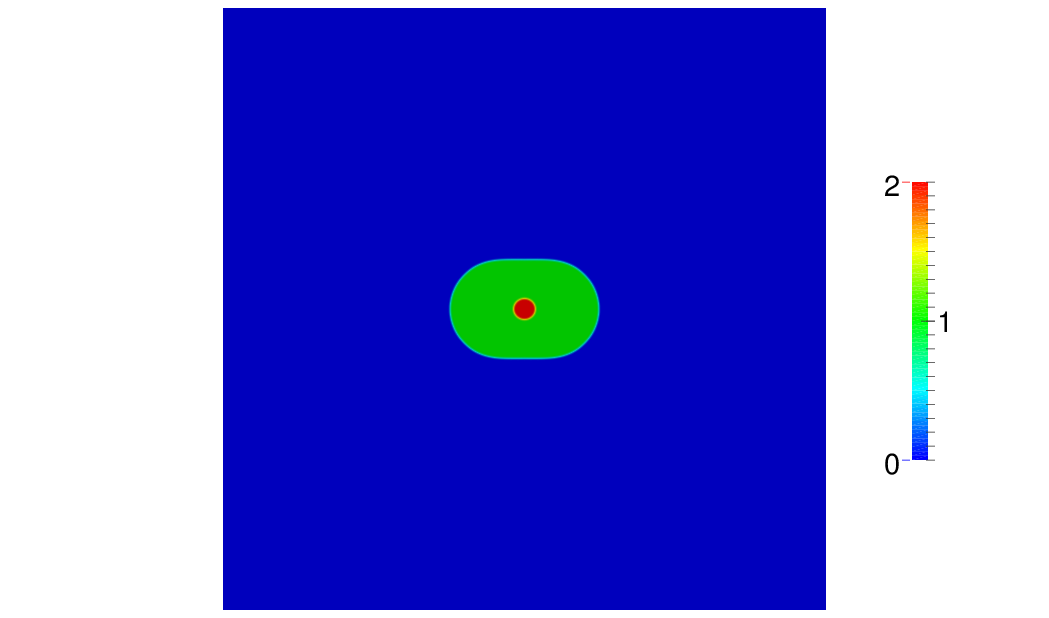} 
\includegraphics[angle=-0,width=0.25\textwidth]{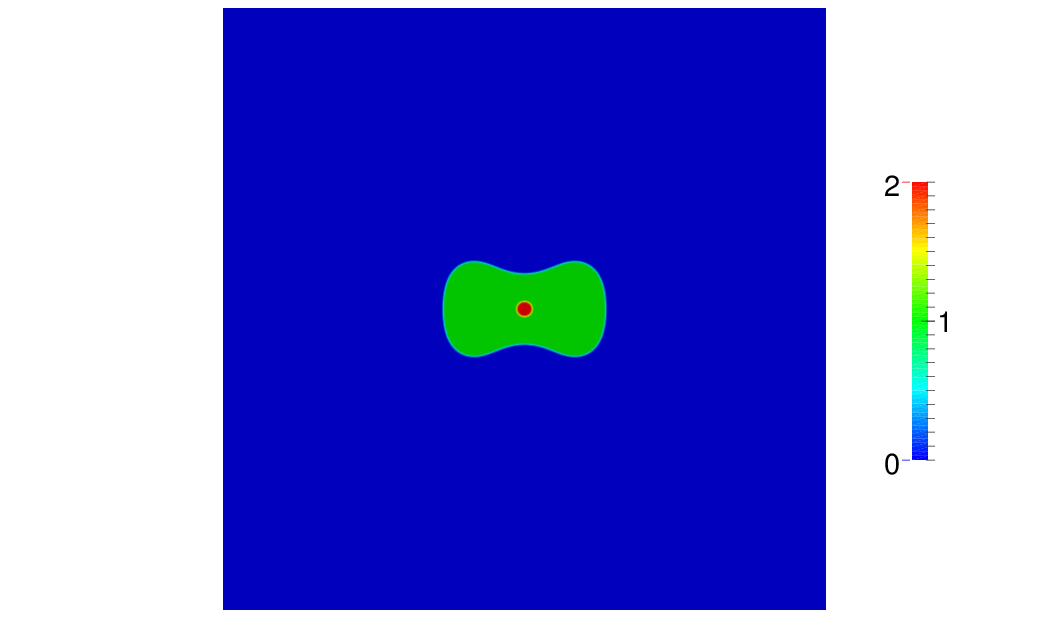} 
\includegraphics[angle=-0,width=0.25\textwidth]{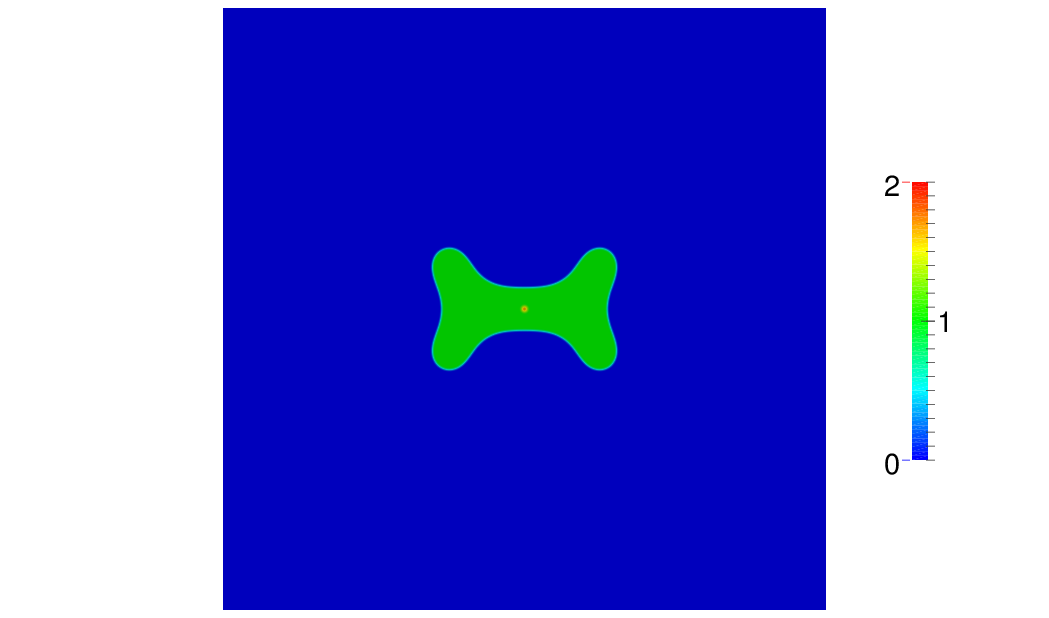} 
\includegraphics[angle=-0,width=0.25\textwidth]{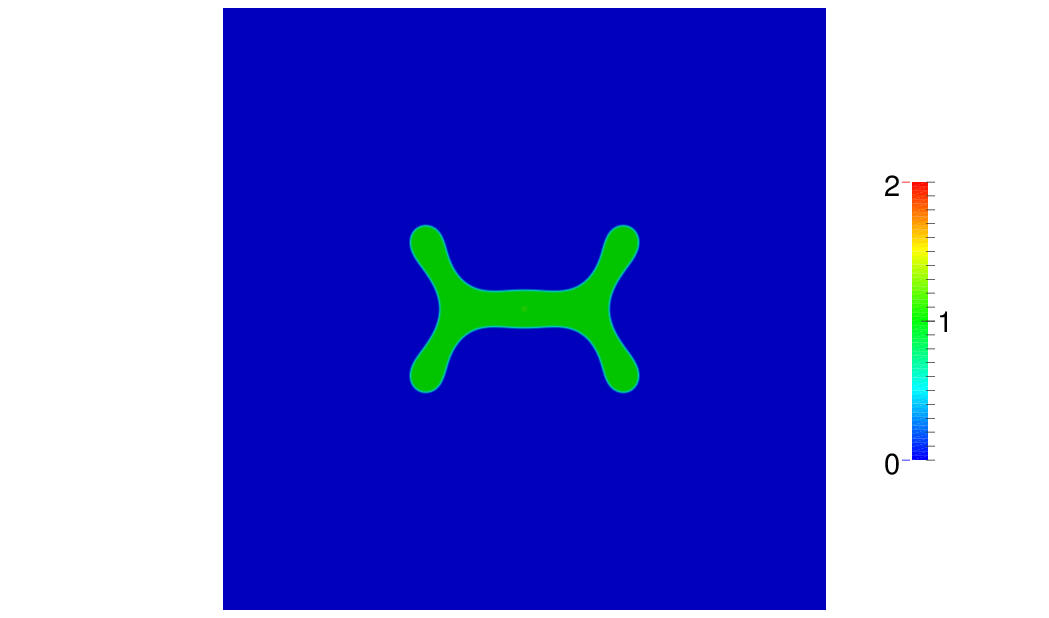} 
}
\caption{
The solution $\bm{\varphi}_{h}^{n}$ at times $t = 10,\ 15,\ 20,\ 25$ for \eqref{eq:U}
with the perturbed initial data \eqref{eq:pertsi} and $R_{3} = 0.5$.
}
\label{fig:nnlarge05}
% /home/silo2/rn/alberta/ch_tumour/multi_darcy.fig26_R05
\end{figure}%
Let us also point out that the numerical simulations with the source term \eqref{eq:Unew2} are almost identical to Figures~\ref{fig:nnlarge1} and \ref{fig:nnlarge05}, and so we omit the results.

We also investigate the effects of a larger initial necrotic core on the evolution of the tumour.  To this end, we repeat the computation in Figure~\ref{fig:nnlarge1} for the initial radius $R_{3}=1.5$ for $\mathcal{D}_{N} \in \{0, 1, 5\}$.  The three different evolutions can be seen in Figure~\ref{fig:nnlarge15}, where we observe that for positive values of $\mathcal{D}_{N}$, the necrotic core slowly disappears, and the subsequent evolution of the tumour is similar to that observed in Figure~\ref{fig:nnlarge05}.  Meanwhile, in the case where the necrotic core does not degrade, upon comparing to Figure~\ref{fig:nnlarge1}, we can conclude that a large necrotic core seems to suppress or delay the development of protrusions and leads to a more compact growth.
\begin{figure}[h]
\center
\mbox{
\includegraphics[angle=-0,width=0.25\textwidth]{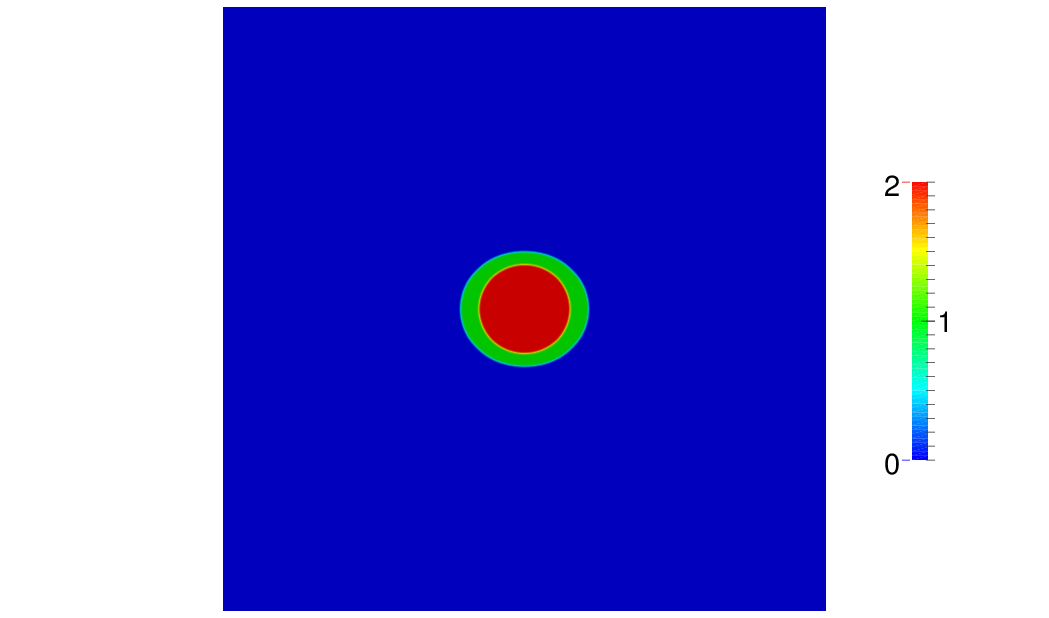} 
\includegraphics[angle=-0,width=0.25\textwidth]{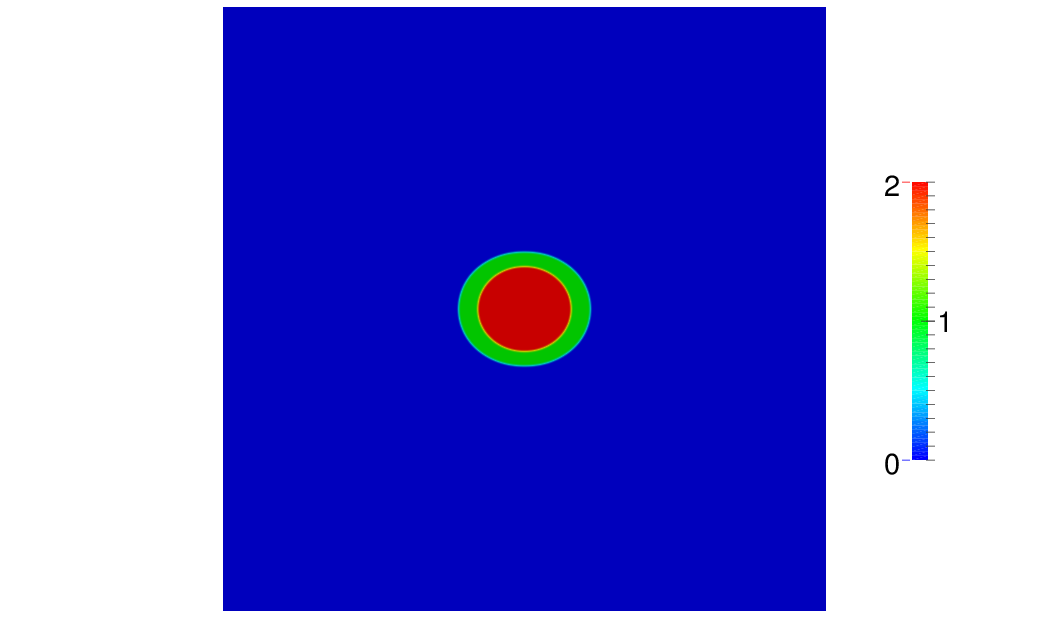} 
\includegraphics[angle=-0,width=0.25\textwidth]{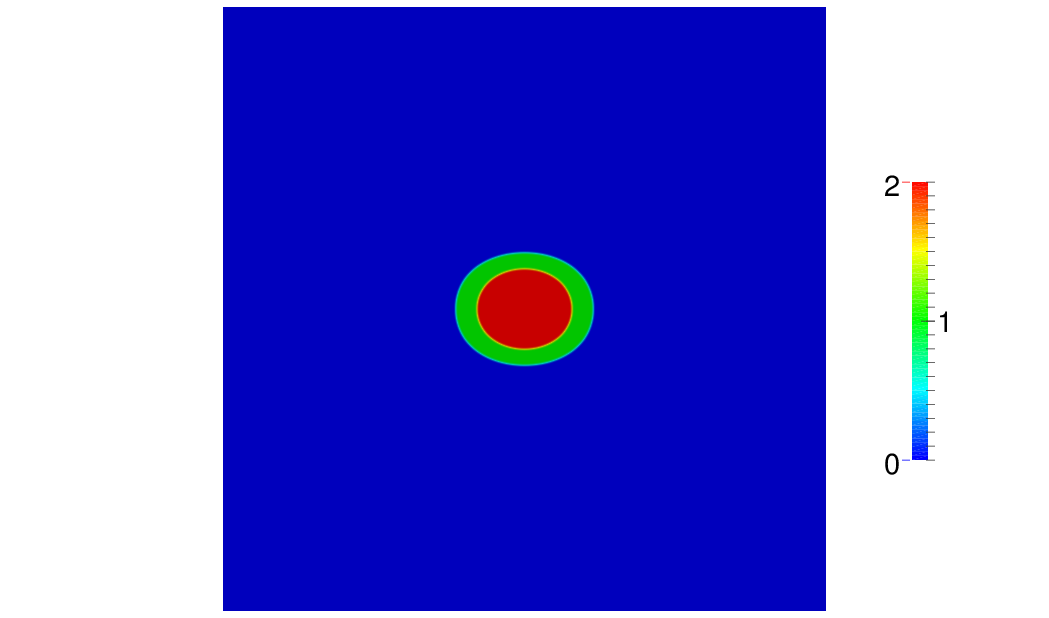} 
\includegraphics[angle=-0,width=0.25\textwidth]{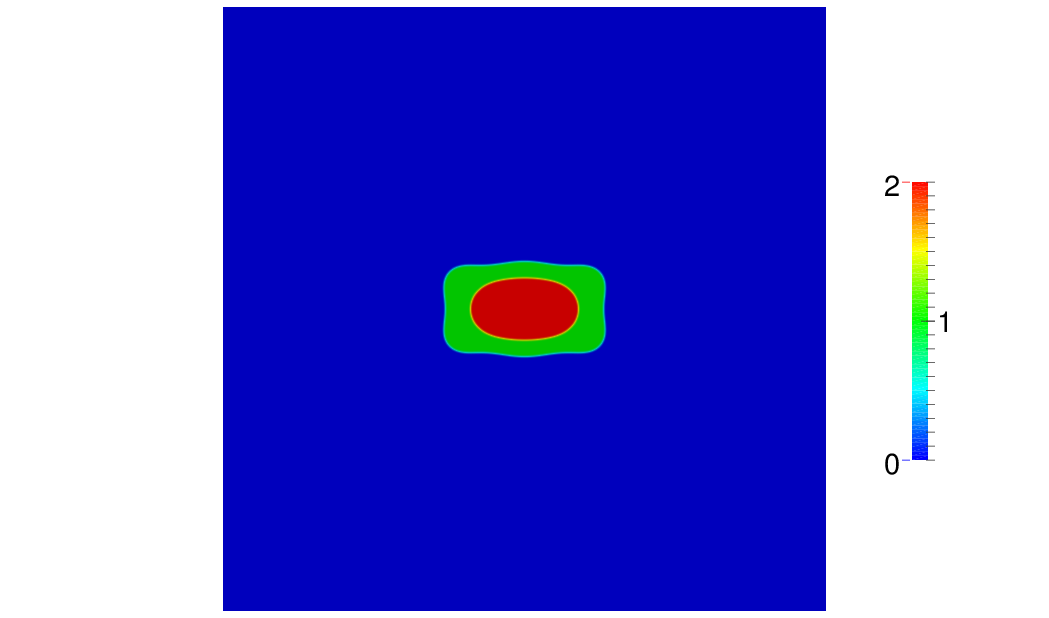} 
}
\mbox{
\includegraphics[angle=-0,width=0.25\textwidth]{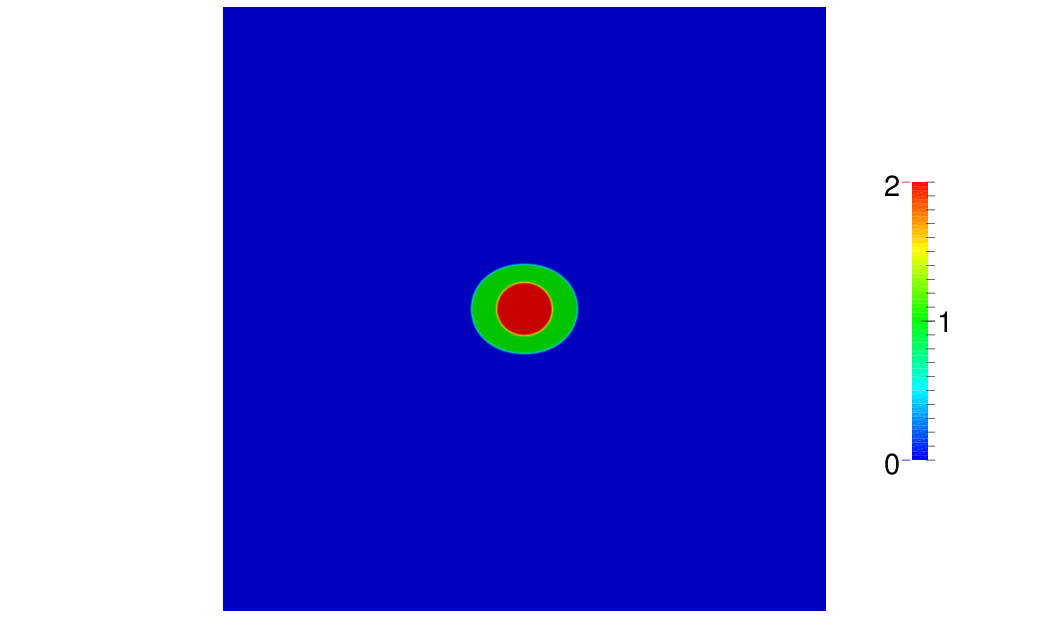} 
\includegraphics[angle=-0,width=0.25\textwidth]{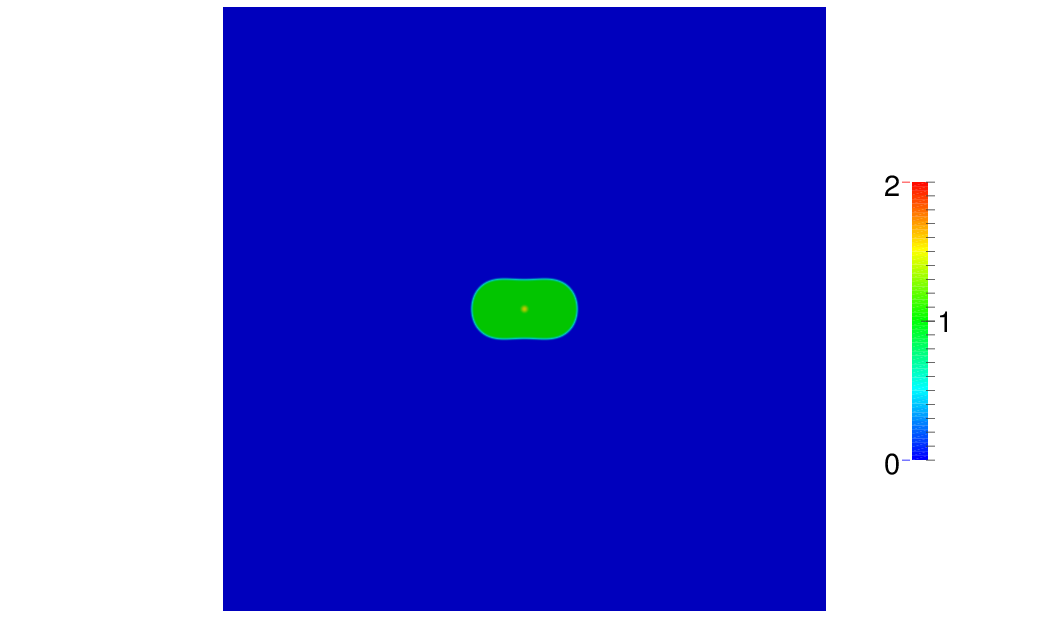} 
\includegraphics[angle=-0,width=0.25\textwidth]{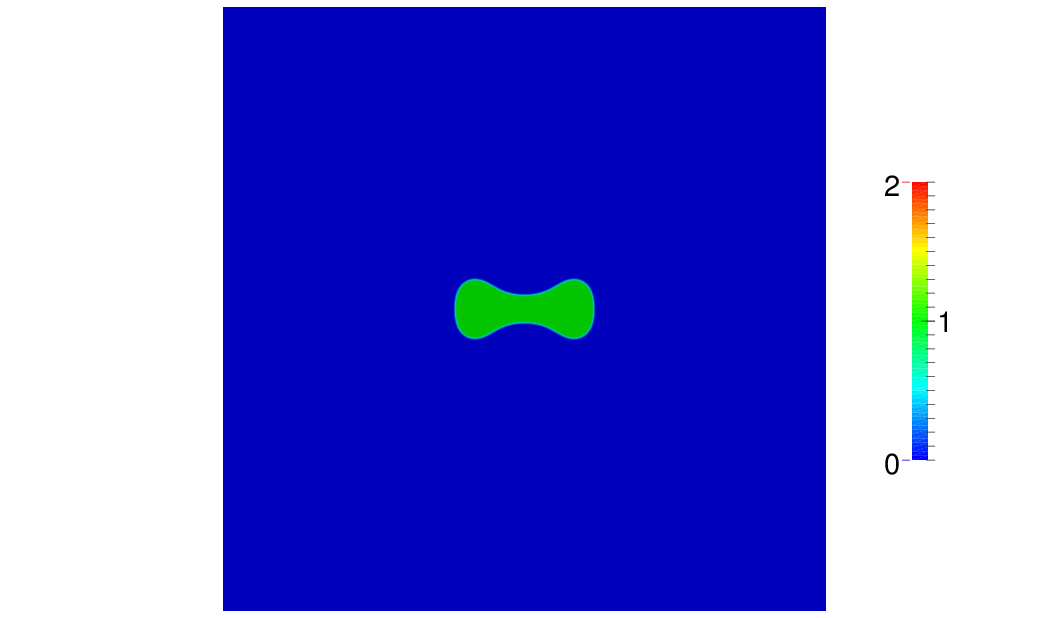} 
\includegraphics[angle=-0,width=0.25\textwidth]{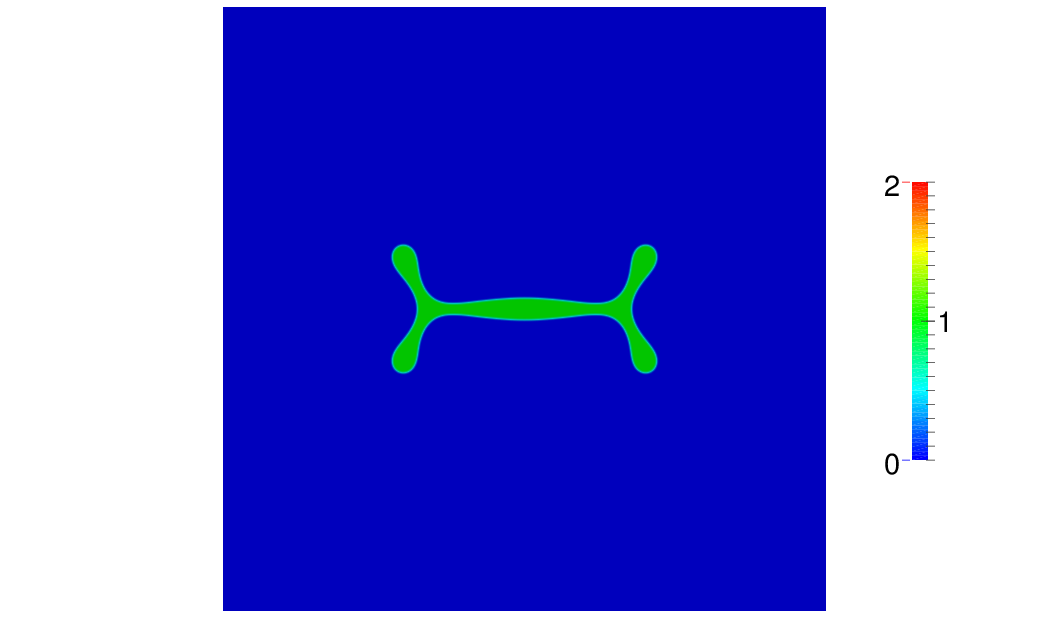} 
}
\mbox{
\includegraphics[angle=-0,width=0.25\textwidth]{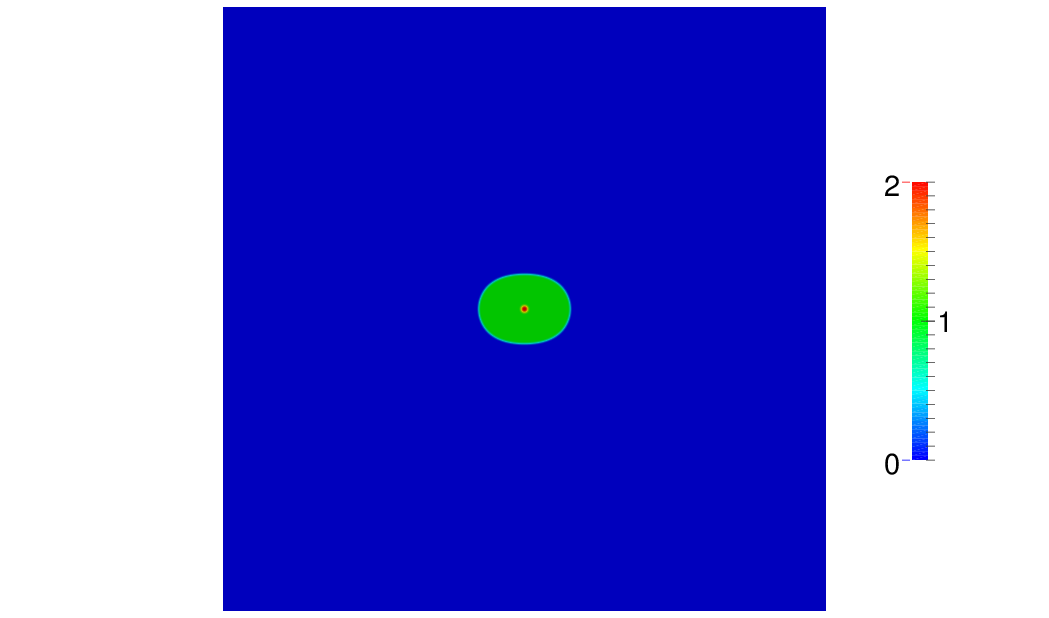} 
\includegraphics[angle=-0,width=0.25\textwidth]{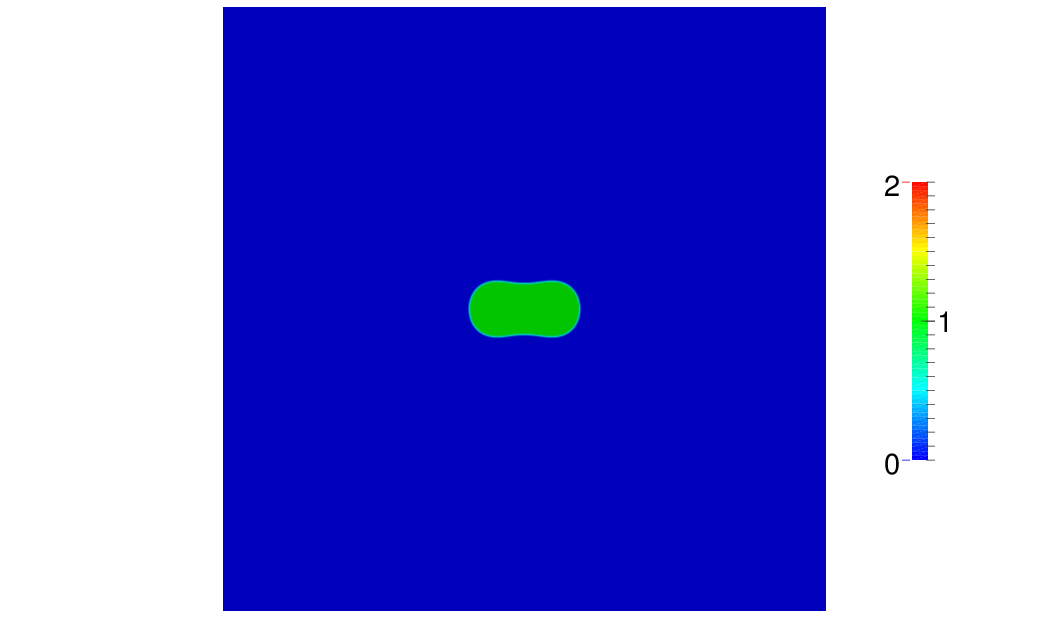} 
\includegraphics[angle=-0,width=0.25\textwidth]{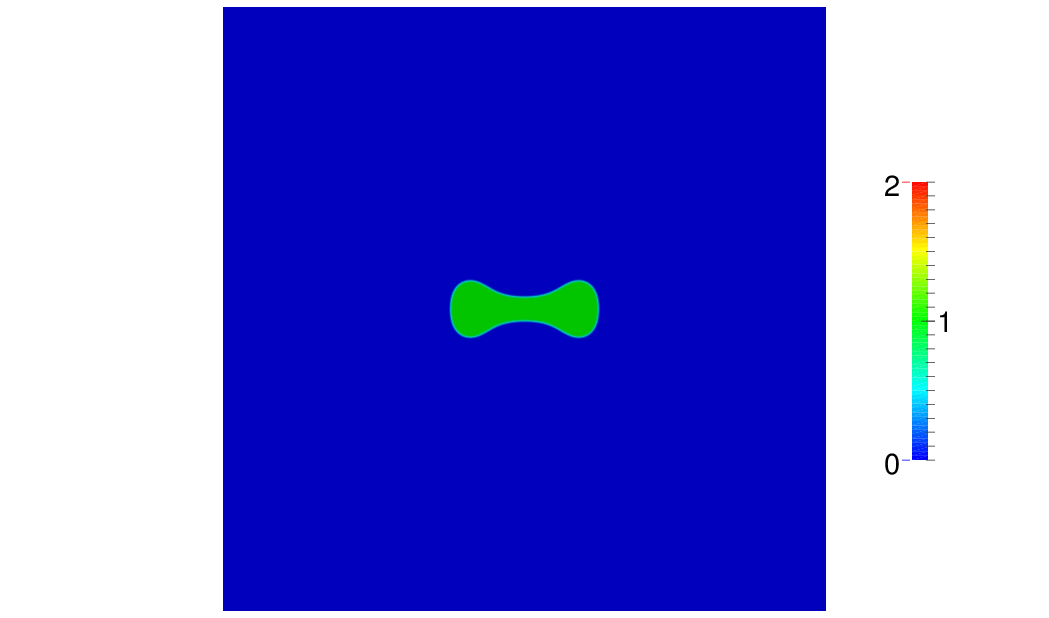} 
\includegraphics[angle=-0,width=0.25\textwidth]{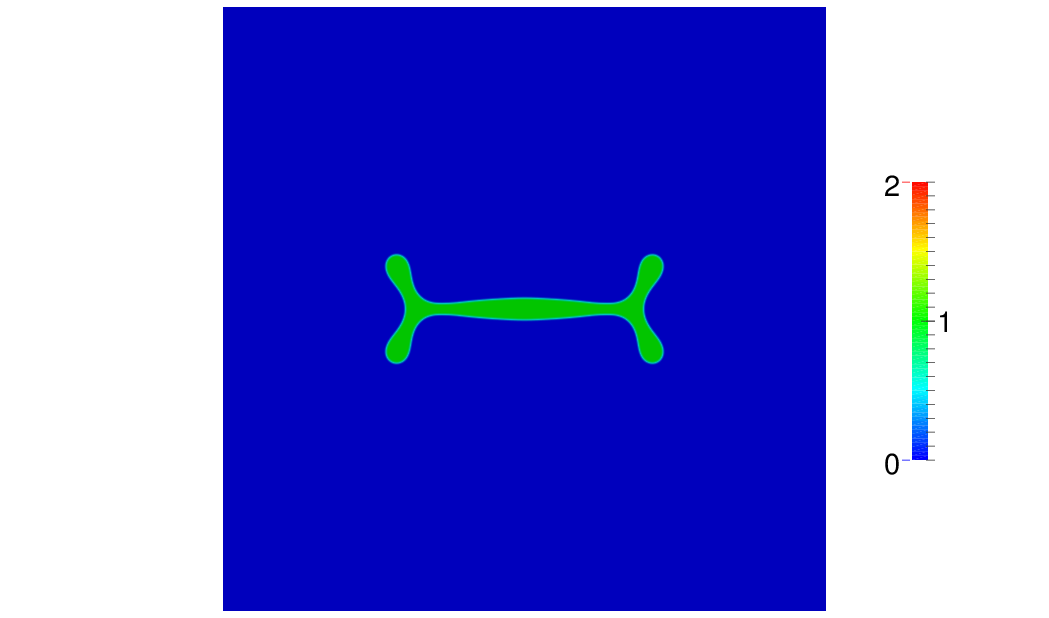} 
}
\caption{
The solution $\bm{\varphi}_{h}^{n}$ at times $t=1,\ 5,\ 10,\ 25$ for 
\eqref{eq:U} with the perturbed initial data \eqref{eq:pertsi} and 
$R_{3} = 1.5$. The top row is for $\mathcal{D}_{N} = 0$, the middle row is
for $\mathcal{D}_{N} = 1$, and the bottom row is for $\mathcal{D}_{N} = 5$.
}
\label{fig:nnlarge15}
% /home/silo2/rn/alberta/ch_tumour/multi_darcy.fig26_R15
% /home/silo2/rn/alberta/ch_tumour/multi_darcy.fig26_R15_DN1
% /home/silo2/rn/alberta/ch_tumour/multi_darcy.fig26_R15_DN5
\end{figure}%

\FloatBarrier

\section*{Acknowledgments}
The authors gratefully acknowledge the support of the Regensburger Universit\"{a}tsstiftung Hans Vielberth.

\bibliographystyle{plain}
\bibliography{Multiphase}
\end{document}